\def\namedlabel#1#2{\begingroup
   \def\@currentlabel{#2}%
   \label{#1}\endgroup
}
\title{Indefinite nilsolitons and Einstein solvmanifolds}
\author{Diego Conti and Federico A. Rossi}
\newtheorem{theorem}{Theorem}[section]
\newtheorem{lemma}[theorem]{Lemma}
\newtheorem{corollary}[theorem]{Corollary}
\newtheorem{proposition}[theorem]{Proposition}
\theoremstyle{definition}
\newtheorem{definition}[theorem]{Definition}
\newtheorem{example}[theorem]{Example}
\theoremstyle{remark}
\newtheorem{remark}[theorem]{Remark}
\newcommand{\R}{\mathbb{R}}
\newcommand{\lie}[1]{\mathfrak{#1}}     
\newcommand{\g}{\lie{g}}
\newcommand{\h}{\mathbb{H}}
\newcommand{\hook}{\lrcorner\,}
\newcommand{\id}{\mathrm{Id}}   
\newcommand{\gl}{\lie{gl}}
\newcommand{\Span}[1]{\operatorname{Span}\left\{#1\right\}}
\newcommand{\st}{\;\mid\;}          
\DeclareMathOperator{\ric}{ric} 
\DeclareMathOperator{\Ric}{Ric} 
\DeclareMathOperator{\Aut}{Aut}
\DeclareMathOperator{\diag}{diag}
\DeclareMathOperator{\Der}{Der}
\DeclareMathOperator{\ad}{ad}
\DeclareMathOperator{\adtilde}{\widetilde{ad}}
\DeclareMathOperator{\Tr}{tr}
\newcolumntype{C}{>{$}c<{$}}
\newcolumntype{L}{>{$}l<{$}}
\newcolumntype{R}{>{$}r<{$}}
\newcommand{\llangle}{\langle\!\langle}
\newcommand{\rrangle}{\rangle\!\rangle}
\begin{document}
\maketitle

\begin{abstract}
A nilsoliton is a nilpotent Lie algebra $\g$ with a metric such that $\Ric=\lambda \id+D$, with $D$ a derivation. For indefinite metrics, this determines four different geometries,
according to whether $\lambda$ and $D$ are zero or not. We illustrate with examples the greater flexibility of the indefinite case compared to the Riemannian setting.  We determine the algebraic properties that  $D$ must satisfy when it is nonzero.

For each of the four geometries, we show that under suitable assumptions it is possible to extend the nilsoliton metric to an Einstein solvmanifold of the form $\g\rtimes \R^k$. Conversely, we introduce a large class of indefinite Einstein solvmanifolds of the form $\g\rtimes \R^k$ that determine a nilsoliton metric on $\g$ by restriction. We show with examples that, unlike in the Riemannian case, one cannot establish a  correspondence between the full classes of Einstein solvmanifolds and nilsolitons.
\end{abstract}

\renewcommand{\thefootnote}{\fnsymbol{footnote}}
\footnotetext{\emph{MSC class 2020}: \emph{Primary} 53C50; \emph{Secondary} 53C25, 53C30, 22E25}
\footnotetext{\emph{Keywords}: Einstein metrics, nilsoliton, solvable Lie groups, pseudo-Riemannian homogeneous metrics.}
\renewcommand{\thefootnote}{\arabic{footnote}}

\section*{Introduction}
A Riemannian solvmanifold can be defined as a solvable Lie group endowed with a left-invariant Riemannian metric. A long-standing conjecture of Alekseevski states that all Einstein homogeneous Riemannian manifolds of negative curvature are of this type~\cite{Alekseevski:Classification}; a proof of this conjecture appeared recently in~\cite{BohmLafuente:AlekseevskyConjecture}. The geometry of a Riemannian solvmanifold is entirely determined by assigning the metric at the identity, i.e. by fixing an inner product on the Lie algebra. One then says that the Lie algebra is Einstein if so is the corresponding left-invariant metric.

The structure of Riemannian Einstein solvable Lie algebras is well understood (see the recent survey~\cite{Jablonski:Survey}). In particular, they are nonunimodular~(\cite{Dotti:RicciCurvature}) and \emph{standard}~(\cite{Lauret:Einstein_solvmanifolds}), i.e. they decompose as an orthogonal direct sum
\[\tilde \g=\g\oplus^\perp\lie a,\]
where $\g$ is a nilpotent ideal and $\lie a$ an abelian subalgebra. Previous results of J.~Heber~(\cite{Heber:noncompact}) show that $\lie a$ acts by normal derivations on $\g$; more precisely, for each $X$ in $\lie a$, one has that the metric adjoint of $\ad X$ is a derivation commuting with $\ad \lie a$. A result of R.~Azencott and E.N.~Wilson~(\cite{AzencottWilson2}) then shows that the Lie algebra can be modified by projecting on the self-adjoint part, giving rise to an isometric solvmanifold for which the abelian subalgebra $\lie a$ acts by self-adjoint derivations; it is shown in~\cite{Heber:noncompact} that $\lie a$ acts faithfully and contains some $H$ such that $\ad H$ is positive definite. One then says that $\g$ is of \emph{Iwasawa type}.

The restriction of the metric to the nilpotent ideal $\g$ satisfies the  equation
\begin{equation}
\label{eqn:nilsoliton}
\Ric=\lambda \id+D, \quad \lambda\in\R,\ D\in\Der\g.
\end{equation}
Left-invariant metrics on a Lie group satisfying~\eqref{eqn:nilsoliton} are called \emph{algebraic Ricci solitons}, due to the fact that they are homothetic solitons for the Ricci flow~(\cite{Lauret:RicciSoliton}); an algebraic Ricci soliton on a nilpotent Lie group is called a \emph{nilsoliton}. In fact, every left-invariant Riemannian Ricci soliton metric on a nilpotent Lie group is a nilsoliton~\cite{Lauret:RicciSoliton}; it was proved in \cite{Jablonski1,Jablonski2} that every homogeneous Riemannian Ricci soliton is isometric to an algebraic Ricci soliton. A nilpotent Lie algebra admits at most one Riemannian nilsoliton metric up to isomorphisms and rescaling~(\cite{Lauret:RicciSoliton}). Several characterizations of Lie algebras admitting a nilsoliton metric are known (\cite{Payne:TheExistence,Nikolayevsky,Lauret:RicciSoliton}); this has led to classifications in small dimension (\cite{Will:RankOne, FernandezCulma, Lauret:Finding}).

Conversely, it was shown in~\cite{Lauret:RicciSoliton} that every Riemannian nilsoliton $\g$ gives rise to an Einstein solvable Lie algebra  $\tilde \g=\g\rtimes\R$. This effectively implies that a classification of Einstein Riemannian solvmanifolds can be reduced to a classification of Riemannian nilsolitons.

\smallskip
The notions of algebraic Ricci soliton and nilsoliton carry over naturally to the pseudo-Riemannian setting, simply by imposing~\eqref{eqn:nilsoliton} on an indefinite metric. It is known (see~\cite{Onda:ExampleAgebraicRicciSolitons}) that pseudo-Riemannian algebraic Ricci solitons are Ricci solitons; however, a left-invariant indefinite metric on a Lie group can be a Ricci soliton without satisfying~\eqref{eqn:nilsoliton}, see \cite{BatatOnda:AlgebraicRicciSoliton, Calvaruso:OnSemidirect, Wears}. The variational nature of nilsoliton metrics as critical points of the scalar curvature for an appropriately restricted class of metrics also carries over to the indefinite case (see~\cite{Yan:pseudoalgebraic}).

Moreover, every pseudo-Riemannian nilsoliton with $\lambda\neq0$ and $\Tr D\neq0$ determines an Einstein solvable Lie algebra
\cite{Yan:Pseudo-RiemannianEinsteinhomogeneous}. One known construction of pseudo-Riemannian nilsolitons is a form of Wick rotation (see~\cite{HELLELAND2020:WickRotations})
where the metric of a Riemannian nilsolitons is altered by inverting the sign of the metric on the odd eigenspaces of the  derivation $D$ (suitably normalized; see~\cite{Yan:Pseudo-RiemannianEinsteinhomogeneous}). The double extension procedure of~\cite{MedinaRevoy} can also  be adapted to the nilsoliton setting, yielding a recipe to produce Lorentzian nilsolitons from Riemannian ones~\cite{YanDeng:DoubleExtensionRiemNilsolitons}. Examples of nilsolitons have been constructed in~\cite{Onda:ExampleAgebraicRicciSolitons, KondoTamaru:LorentzianNilpotent}.

A complete understanding of the relation between indefinite nilsolitons and Einstein solvmanifolds appears to be lacking at the time of writing. This paper takes a step in that direction; we construct several examples which show essential differences with the Riemannian case and suggest the appropriate definitions which enable us to extend some known results to the indefinite setting.

We start with a discussion of the correct pseudo-Riemannian analogue of the standard condition. Indeed, the Riemannian definition has several possible formulations, which become equivalent if one assumes the metric to be Einstein.
We show with examples that the equivalence does not hold in the pseudo-Riemannian setting, suggesting that the appropriate definition for the indefinite case is requiring an orthogonal direct sum of a nilpotent ideal and an abelian subalgebra, without insisting that the nilpotent ideal coincide with either the nilradical or the derived algebra. However, we show by an example that not all indefinite Einstein solvmanifolds are standard in this sense.

Another striking difference with the Riemannian case is that in~\eqref{eqn:nilsoliton}, $D$ and $\lambda$ can be independently zero or nonzero, giving a combination of four different geometries.
In particular, we produce an example with $\lambda=0$ and $D\neq 0$; together with the known existing examples, this shows that each of the four groups is nonempty, whilst Riemannian nilsolitons are either flat or satisfy $D\neq 0$, $\lambda<0$. In addition, we observe that the derivation $D$, whilst self-adjoint, is not necessarily semisimple; this can happen for both the case $\lambda=0$ (Example~\ref{ex:Nil2dim4}) and the case $\lambda\neq0$ (Example~\ref{ex:nil4notss}). We also derive algebraic conditions on the derivation $D$ (Theorem~\ref{thm:nilsolitonsandnik}) that allow us to conclude that most Lie algebras do not admit a nonsemisimple nilsoliton metric with $\lambda\neq0$, at least in low dimensions (Proposition~\ref{prop:nonil4}).

The Iwasawa condition also has a natural equivalent for indefinite metrics, which we call \emph{pseudo-Iwasawa}. We show that the Azencott-Wilson theorem extends to the indefinite setting (Proposition~\ref{prop:pseudoAzencottWilson}), and therefore enables one to obtain a pseudo-Iwasawa solvmanifold from a standard solvmanifold such that for all $X\in\lie a$, $(\ad X)^*$ is a derivation commuting with $\lie a$. Unfortunately, the latter condition does not hold for every indefinite Einstein solvmanifolds, as we show with examples. Therefore, not all standard Einstein solvmanifolds can be reduced to the pseudo-Iwasawa case, unlike in the Riemannian setting.

Nevertheless, the pseudo-Iwasawa class shows its importance in the fact that the nilpotent ideal $\g$ is always a nilsoliton (Theorem~\ref{thm:solvablenilsolitoncorrespondence}). Conversely, the Einstein extensions of a nilsoliton constructed in \cite{YanDeng:DoubleExtensionRiemNilsolitons,Yan:Pseudo-RiemannianEinsteinhomogeneous} are pseudo-Iwasawa. More precisely, we obtain a correspondence between Einstein solvable Lie algebras with a pseudo-Iwasawa decomposition and a suitable class of nilsolitons
which resembles closely the Riemannian situation, but complicated by the fact that four different geometries occur:
\begin{itemize}[leftmargin=\parindent]
\item A nonunimodular pseudo-Iwasawa solvable Lie algebra with an Einstein metric of nonzero scalar curvature
determines a nilsoliton metric on the nilpotent ideal $\g$ (Corollary~\ref{cor:Hzero}), with $\lambda\neq0$ and $\Tr D\neq0$; in this case $\g$ coincides with the nilradical (Corollary~\ref{cor:nilradical}). Conversely, a nilsoliton with $\lambda\neq0$ and $D$ not nilpotent (or equivalently $\Tr D\neq0$) can be extended to a solvable nonunimodular Einstein Lie algebra $\g\rtimes\lie a$, where $\lie a$ is any subalgebra of self-adjoint derivations containing $D$ such that the bilinear form
\begin{equation*}
\langle X,Y\rangle_{\Tr{}}=\Tr(XY)
\end{equation*}
is nondegenerate (Theorem~\ref{thm:constructionstandardextension}). The correspondence is one-to-one if the metrics are fixed. Unlike the Riemannian case, a Lie algebra may admit more than one nilsoliton metric with $\lambda\neq0$ and $\Tr D\neq0$; in all the cases we know, the resulting solvable extensions are isomorphic as Lie algebras. We do not know if this is a general fact.

\item A unimodular pseudo-Iwasawa solvable Lie algebra with an Einstein metric of nonzero scalar curvature
determines an Einstein metric on the nilpotent ideal $\g$; also in this case $\g$ coincides with the nilradical. Conversely, an Einstein nilpotent Lie algebra with nonzero scalar curvature can be extended to a unimodular pseudo-Iwasawa solvable Lie algebra $\tilde\g=\g\rtimes\lie a$ for any subalgebra $\lie a$ of self-adjoint derivations for which $\langle,\rangle_{\Tr{}}$ is nondegenerate.

\item A nonunimodular pseudo-Iwasawa solvable Lie algebra with a Ricci-flat metric $\tilde\g$ gives rise to a nilsoliton metric on $\g$ with $\Ric=D$, where $D$ can be zero or not (Corollary~\ref{cor:restrictricciflat}). Conversely, a nilsoliton $\g$ with $\Ric=D$ can be extended to a nonunimodular Ricci-flat solvmanifold if $\Der\g$ contains  a subalgebra $\lie a$ of self-adjoint derivations, not all trace-free, on which $\langle,\rangle_{\Tr{}}$ is zero (Proposition~\ref{prop:extendnil1}, Proposition~\ref{prop:extendnil2}). Note that not every nilsoliton with $\Ric=D\neq 0$ admits such an $\lie a$ (Example~\ref{example:nil2nonsiestende}).

\item A unimodular pseudo-Iwasawa solvable Lie algebra with a Ricci-flat metric $\tilde\g$ determines a Ricci-flat metric on the nilpotent ideal $\g$. Conversely, any Ricci-flat nilpotent Lie algebra $\g$ can be extended to a unimodular Ricci-flat solvmanifold; one can of course take the product $\g\times\R^k$, but more generally one can construct pseudo-Iwasawa Ricci-flat solvmanifolds $\g\rtimes\lie a$,  where $\lie a$ is any subalgebra of $\Der\g$ of trace-free self-adjoint derivations such that $\langle,\rangle_{\Tr{}}$ is zero (Proposition~\ref{prop:extendnil1}).
\end{itemize}

Finally, we show that the Azencott-Wilson trick allows one to extend nilsolitons to Einstein solvmanifolds which are not of pseudo-Iwasawa type (Proposition~\ref{prop:costruiscinonpseudoiwasawa}).

\medskip
\noindent \textbf{Acknowledgments:} The authors acknowledge GNSAGA of INdAM. F.A.~Rossi also acknowledges the Young Talents Award of Universit\`{a} degli Studi di Milano-Bicocca joint with Accademia Nazionale dei Lincei.

\section{Standard Lie algebras and the Ricci tensor}
\label{sec:standard}
In the Riemannian context, an important class of metric Lie algebras consists of standard solvable Lie algebras, that is, solvable Lie algebras $\tilde\g$ with a fixed metric such that $[\tilde \g,\tilde \g]^\perp$ is abelian. In fact, J.~Lauret~\cite{Lauret:Einstein_solvmanifolds} proved that all Riemannian Einstein solvmanifolds are standard, and the structure of standard Riemannian Einstein solvmanifolds had been previously described by J.~Heber in \cite{Heber:noncompact}.

Standard Riemannian Lie algebras decompose as
\begin{equation}
\label{eqn:standard}
\tilde \g=\g\oplus^\perp\lie{a},
\end{equation}
with $\g= [\tilde \g,\tilde \g]$ nilpotent and $\lie{a}$ abelian; the notation $\oplus^\perp$ represents an orthogonal direct sum of vector spaces, and $\lie{a}$ acts nontrivially except when $\tilde \g=\lie{a}$.

By contrast, in the indefinite case, imposing that $[\tilde \g,\tilde \g]^\perp$ is abelian does not imply a decomposition of the form \eqref{eqn:standard}.

\begin{example}
\label{ex:adegenerate}
Take the solvable Lie algebra $\tilde\g$ defined by
\[(e^{14},-e^{24},-2e^{12},0);\]
here and throughout the paper, we will use the language introduced in~\cite{Salamon:ComplexStructures}, and describe Lie algebras by giving the action of the Chevalley-Eilenberg operator $d$ on the dual, which is equivalent to giving the expression for the Lie bracket. Thus, the notation above means that $\g^*$ has a fixed basis $\{e^1,\dotsc, e^4\}$ with $de^1=e^{14}=e^1\wedge e^4$, $de^2=-e^{24}=-e^2\wedge e^4$ and so on.

Consider the metric
\[g_1e^1\otimes e^1+ g_2e^2\otimes e^2+g_3e^3\odot e^4,\]
where $e^3\odot e^4=e^3\otimes e^4+e^4\otimes e^3$ and $g_1,g_2,g_3$ are nonzero real parameters. Computations show that this metric is Einstein when $g_2=g_3^2/g_1$.

In this case, $[\tilde\g,\tilde\g]^\perp=\Span{e_3}$ is one-dimensional, hence abelian; however it is not possible to write the decomposition~\eqref{eqn:standard}
because the metric restricted to $[\tilde\g,\tilde\g]=\Span{e_1,e_2,e_3}$ and  $[\tilde\g,\tilde\g]^\perp$ is degenerate.
\end{example}
The above example shows that in order for an Einstein pseudo-Riemannian solvable Lie algebra to be decomposed as the orthogonal direct sum of a nilpotent ideal and an abelian subalgebra, one needs to impose that the restriction of the metric to (one of) the factors is nondegenerate.

In addition, for Riemannian Einstein solvable Lie algebras one has that $[\tilde \g,\tilde \g]$ equals the nilradical of $\tilde \g$ (see \cite{Heber:noncompact}), but this is not always true in the pseudo-Riemannian setting. This means that there is some freedom in the choice of the nilpotent ideal $\g$. Insisting that $\g$ equal either the derived algebra or the nilradical is not appropriate, as shown by the following examples.
\begin{example}\label{ex:derivedalgebradoesnotwork}
Consider the nilpotent Lie algebra $\g$
\[(0, 0, 0, e^{12}, e^{13},e^{24}, e^{15} + e^{23},e^{26} + e^{14})\]
and take the semidirect product
\[\tilde\g=\g\oplus\Span{e_{9}}, \quad \ad e_{9}=e^3\otimes e_3 + e^5\otimes e_5 + e^7\otimes e_7.\]
For any metric of the form  $\sum g_i e^i\otimes e^i$, this gives an orthogonal decomposition of $\tilde \g$ as a direct sum of a nilpotent ideal (the nilradical in this case) and an abelian subalgebra, in analogy to \eqref{eqn:standard}. However, the nilpotent ideal cannot be taken to be $[\tilde \g,\tilde \g]$, whose orthogonal complement $\Span{e_1,e_2,e_{9}}$ is not a subalgebra.  We point out that the $g_i$ can be chosen so that the  metric is Einstein (of indefinite signature); we refer to Example~\ref{ex:extensionNONpseudo-Iwasawa} for the calculations.
\end{example}

\begin{example}
Consider the solvable Lie algebra $(0,e^{12},-e^{13},0)$ with invariant Ricci-flat metric given by:
\[\langle,\rangle=e^1\odot e^4+e^2\odot e^3.\]
It is easy to verify that the metric restricted to the nilradical $\Span{e_2,e_3,e_4}$ is degenerate. Instead there is an orthogonal decomposition as a direct sum of the nilpotent ideal $\g=\Span{e_2,e_3}=[\tilde{\g},\tilde{\g}]$ and the abelian Lie algebra $\lie{a}=\Span{e_1,e_4}$.
\end{example}

Motivated by the above examples, in the attempt to generalize Heber's results, we shall employ the following:
\begin{definition}
\label{def:standard}
A \emph{standard decomposition} of a metric Lie algebra $\tilde \g$ is a decomposition
\[\tilde \g=\g\oplus^\perp\lie a,\]
where $\g$ is a nilpotent ideal and $\lie a$ is an abelian subalgebra.
\end{definition}
Thus, a standard Riemannian solvable Lie algebra $\tilde\g$ admits the standard decomposition~\eqref{eqn:standard}. However, a Riemannian solvable Lie algebra may admit a standard decomposition even if it not standard, as evident from Example~\ref{ex:derivedalgebradoesnotwork} (although a nonstandard Riemannian Lie algebra cannot be Einstein \cite{Lauret:Einstein_solvmanifolds}).

\begin{remark}
\label{rk:g_has_to_be_derived_algebra}
If $\tilde \g$ admits a standard decomposition $\tilde \g=\g\oplus^\perp\lie a$, then $\g$ sits between the derived algebra and the nilradical of $\tilde \g$. Thus, if the nilradical coincides with $[\tilde \g,\tilde \g]$ (as is the case for Riemannian Einstein solvable Lie algebras, see~\cite{Heber:noncompact}), the only possible standard decomposition is with $\g=[\tilde \g,\tilde \g]$.
\end{remark}

In the rest of this section we shall study the structure of Einstein pseudo-Riemannian Lie algebras with a standard decomposition. We emphasize, however, that not all Einstein pseudo-Riemannian solvmanifolds are of this type.
\begin{example}
The metric Lie algebra of Example~\ref{ex:adegenerate} does not admit a standard decomposition $\g\oplus^\perp\lie a$: by Remark~\ref{rk:g_has_to_be_derived_algebra}, in this case the only possible choice for $\g$ is the derived algebra $\Span{e_1,e_2,e_3}$, which is degenerate.
\end{example}
\begin{example}
Even when the derived algebra is nondegenerate,  standard decompositions may fail to exist.

Consider the solvable Lie algebra $\tilde\g$ defined by
\[ [e_1,e_2]=e_1,\quad [e_1,e_3]=e_1,\quad [e_1,e_4]=e_1,\quad [e_3,e_4]=-2(e_2-e_3);\] this is isomorphic to $\lie{aff}_{\R}\times\lie{aff}_{\R}=(0,e^{12},0,e^{34})$, as one can verify by considering the basis  $\{e_1,e_2,e_2-e_3,\frac12(e_4-e_2)\}$.
Fix the metric
\[g=e^1\otimes e^1+ e^2\otimes e^2 + e^3\odot e^4;\]
computations show that this metric is Einstein with $\lambda=-3$.

In this case, the nilradical coincides with the derived algebra; we compute
\[[\tilde\g,\tilde\g]=\Span{e_1,e_2-e_3}, \qquad[\tilde\g,\tilde\g]^\perp=\Span{e_3,e_2+e_4}.\]
Thus, whilst nondegenerate, the orthogonal complement of $[\tilde\g,\tilde\g]^\perp$ is not abelian, and does not give a standard decomposition, since $[e_3,e_2+e_4]=-2(e_2-e_3)$.

Notice however, that we can write a \emph{nonorthogonal} direct sum
\[\tilde\g=[\tilde\g,\tilde\g]\oplus \Span{e_2,e_4},\]
with $\Span{e_2,e_4}$ abelian.
\end{example}

For the rest of this section, $\tilde\g$ will denote a pseudo-Riemannian solvable Lie algebra with a standard decomposition $\tilde\g=\g\oplus^\perp\lie{a}$.
We can choose an orthogonal basis $\{e_1,\dotsc, e_k\}$ of $\lie{a}$, $\langle e_k,e_k\rangle=\epsilon_k=\pm1$. Let $\phi_1,\dots, \phi_k$ be the derivations of $\lie{g}$ defined by
\[[v,e_s]=\phi_s(v)\quad v\in\g,\]
and let $\phi^*_s$ denote the adjoint with respect to the pseudo-Riemannian metric $\langle,\rangle$, i.e. $\langle X,\phi_s(Y)\rangle=\langle \phi_s^*(X),Y\rangle$ for all $X,Y\in\tilde{\g}$.

With the above notation, we have the following lemmas.

\begin{lemma}\label{lemma:structuresofgandtildeg}
The Lie algebra structures of $\g$ and $\tilde\g$ are related by
\[\adtilde v = \ad v +\sum_s e^s\otimes \phi_s(v), \qquad \adtilde e_s=-\phi_s,\]
and as a consequence
\[\Tr \adtilde v = 0, \qquad \Tr \adtilde e_s=-\Tr\phi_s.\]
For the exterior covariant derivative, we obtain
\[\tilde dv^\flat = dv^\flat -\sum_s(\phi_s^*(v))^\flat\wedge e^s, \qquad \tilde de^s=0.\]
\end{lemma}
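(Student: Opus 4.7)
The plan is to carry out a direct case-by-case verification: each assertion follows by unpacking the definition of the splitting $\tilde\g=\g\oplus^\perp\lie{a}$, together with the Chevalley–Eilenberg identity $d\alpha(X,Y)=-\alpha([X,Y])$. Before starting, I would fix the convention that each $\phi_s$ is extended by zero on $\lie{a}$, so that it becomes an endomorphism of $\tilde\g$; similarly $\ad v$, for $v\in\g$, is extended by zero on $\lie{a}$. Since $\g\perp\lie{a}$ and the total metric is nondegenerate, both restrictions $\langle,\rangle|_\g$ and $\langle,\rangle|_{\lie{a}}$ are nondegenerate; moreover, because $\phi_s$ vanishes on $\lie{a}$, its adjoint $\phi_s^*$ satisfies $\phi_s^*(\g)\perp\lie{a}$, hence $\phi_s^*(\g)\subseteq\g$. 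This technical point is needed for the exterior derivative formula to parse correctly.

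For the bracket identities, I would split according to whether the arguments lie in $\g$ or in $\lie{a}$. On $\g\times\g$, $\adtilde v$ coincides with $\ad v$ because $\g$ is a subalgebra; on $\lie{a}$, $\adtilde v(e_s)=[v,e_s]=\phi_s(v)$ by the definition of $\phi_s$, giving exactly the term $\sum_s e^s\otimes\phi_s(v)$. The formula $\adtilde e_s=-\phi_s$ then follows by antisymmetry on $\g$ and from the abelian assumption on $\lie{a}$. The trace identities are an immediate consequence: $\Tr\ad v=0$ since $\g$ is nilpotent, and the rank-one terms $e^s\otimes\phi_s(v)$ map $\lie{a}$ into $\g$, so they contribute nothing to the diagonal; for $\adtilde e_s$, the only nontrivial block on the diagonal is $-\phi_s$ acting on $\g$.

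For the exterior differential, I would again apply $\tilde dv^\flat(X,Y)=-\langle v,[X,Y]\rangle$ and split cases. On $\g\times\g$ the bracket stays in $\g$, recovering $dv^\flat$, and the wedge terms $(\phi_s^*(v))^\flat\wedge e^s$ vanish since $e^s|_\g=0$. For $w\in\g$ and $e_t\in\lie{a}$, one computes $\tilde dv^\flat(w,e_t)=-\langle v,\phi_t(w)\rangle=-\langle\phi_t^*(v),w\rangle$, and only the $s=t$ summand of the wedge term contributes, giving exactly this value with the correct sign; here the fact that $\phi_s^*(v)\in\g$ ensures that $(\phi_s^*(v))^\flat$ vanishes on $\lie{a}$, so no spurious terms appear. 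On $\lie{a}\times\lie{a}$ both sides vanish, using that $\lie{a}$ is abelian. The final identity $\tilde de^s=0$ reduces to $e^s([\tilde\g,\tilde\g])=0$, which holds because $[\tilde\g,\tilde\g]\subseteq\g$ (as $\g$ is an ideal and $\lie{a}$ is abelian).

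The only real pitfall is keeping track of the extension-by-zero conventions for $\ad v$, $\phi_s$ and $dv^\flat$ as they are transferred from $\g$ to $\tilde\g$, and of the fact that $\phi_s^*$ preserves $\g$; once these are pinned down, each identity reduces to a one-line verification, so there is no serious obstacle.
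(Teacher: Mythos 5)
Your proposal is correct and follows essentially the same route as the paper: the paper dismisses the bracket and trace identities as obvious and proves the differential formula via the single computation $\tilde dv^\flat(u,e_s)=-\langle v,[u,e_s]\rangle=-\langle v,\phi_s(u)\rangle=-\langle\phi_s^*(v),u\rangle$, which is exactly your $\g\times\lie{a}$ case. The only difference is that you make explicit the extension-by-zero conventions and the fact that $\phi_s^*$ preserves $\g$, which the paper leaves implicit.
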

\begin{proof}
The first part is obvious; the second follows from
\[\tilde dv^\flat(u,e_s)=-\langle v,[u,e_s]\rangle = -\langle v,\phi_s(u)\rangle = - \langle \phi_s^*(v),u\rangle.\qedhere\]
\end{proof}

\begin{lemma}\label{lemma:killingofgandtildeg}
The Killing form of $\tilde\g$ satisfies
\[B(v,w)=0, \qquad B(v,e_s)=0, \qquad B(e_s,e_r)=\Tr(\phi_s\circ\phi_r),\]
for any $v,w\in\g$, $e_s,e_r\in\lie{a}$.
\end{lemma}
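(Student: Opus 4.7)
The plan is to compute the Killing form directly from $B(X,Y)=\Tr(\adtilde X\circ \adtilde Y)$ by exploiting the block structure of the operators $\adtilde v$ and $\adtilde e_s$ given by Lemma~\ref{lemma:structuresofgandtildeg}. Fixing a basis adapted to $\tilde\g=\g\oplus\lie a$, write
\[
\adtilde v=\begin{pmatrix}\ad v & \Phi(v)\\ 0 & 0\end{pmatrix},\qquad \adtilde e_s=\begin{pmatrix}-\phi_s & 0 \\ 0 & 0\end{pmatrix},
\]
where the columns of $\Phi(v)$ list the vectors $\phi_s(v)\in\g$. The crucial feature is that the $\lie a$-block of every such matrix vanishes, so each composition has a purely $\g\times\g$ contribution to its trace.

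For the first identity, the product $\adtilde v\circ\adtilde w$ has upper-left block $\ad v\circ \ad w$, so $B(v,w)=\Tr_{\g}(\ad v\circ \ad w)$; this is the Killing form of $\g$ evaluated at $(v,w)$, which vanishes because $\g$ is nilpotent. For the third identity, $\adtilde e_s\circ \adtilde e_r$ has upper-left block $\phi_s\circ\phi_r$, giving $B(e_s,e_r)=\Tr_{\g}(\phi_s\circ\phi_r)$ immediately.

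The main obstacle is the mixed identity $B(v,e_s)=0$, which reduces to showing that $\Tr_{\g}(\ad v\circ \phi_s)=0$ for $v\in\g$ and $\phi_s\in\Der\g$. I would argue as follows. Consider the Lie subalgebra $\lie h=\ad(\g)+\R\phi_s\subseteq\gl(\g)$. The bracket $[\phi_s,\ad v]=\ad(\phi_s v)$ is an inner derivation, so $\ad(\g)$ is an ideal of $\lie h$; since $\g$ is nilpotent, so is its quotient $\ad(\g)\cong\g/Z(\g)$, and $\lie h/\ad(\g)$ is at most one-dimensional. Hence $\lie h$ is solvable. By Lie's theorem, applied after complexifying, there is a basis of $\g_{\C}$ with respect to which every element of $\lie h$ is upper triangular. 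Every element of $\ad(\g)$ is nilpotent (Engel), and therefore strictly upper triangular in this basis, while $\phi_s$ is merely upper triangular. Then $\ad v\circ\phi_s$ is strictly upper triangular and has trace zero, which (being an invariant quantity) remains zero over $\R$. Combining with the matrix calculation $B(v,e_s)=-\Tr_{\g}(\ad v\circ\phi_s)$, this completes the three identities.
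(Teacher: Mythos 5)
Your proof is correct, and for the one nontrivial identity it takes a genuinely different route from the paper. The block decomposition and the reduction of all three identities to traces of compositions on $\g$ is the same as in the paper (which phrases it as $B(X,Y)=\Tr (\adtilde X)|_\g\circ(\adtilde Y)|_\g$, using that every $\adtilde X$ takes values in $\g$). The divergence is in showing $\Tr_\g(\ad v\circ\phi_s)=0$: you build the solvable subalgebra $\ad(\g)+\R\phi_s\subseteq\gl(\g)$, invoke Lie's theorem over $\C$ to triangularize it simultaneously, and observe that the product of a strictly upper triangular matrix with an upper triangular one is strictly upper triangular. The paper instead notes that any derivation $\phi_s$ preserves each term of the lower central series $\g_{k+1}=[\g,\g_k]$ (a one-line induction), while $\ad v$ maps $\g_k$ into $\g_{k+1}$, so $\ad v\circ\phi_s$ strictly lowers the filtration and is therefore nilpotent, hence traceless. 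The filtration argument is more elementary (no Lie's theorem, no complexification, and it needs no commutation hypothesis beyond $\phi_s$ being a derivation), it simultaneously disposes of $B(v,w)=0$ without appealing separately to the vanishing of the Killing form of a nilpotent Lie algebra, and the same observation is reused later in the paper (in the proof of Lemma~\ref{lem:RicciPseudo-Iwasawa}). Your triangularization argument buys nothing extra here, but it is sound: $\ad(\g)$ is a nilpotent ideal of $\lie h$ because $[\phi_s,\ad v]=\ad(\phi_s v)$, the quotient is at most one-dimensional, so $\lie h$ is solvable, each $\ad v$ is a nilpotent operator and hence strictly upper triangular in the triangularizing basis, and the trace is insensitive to complexification.
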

\begin{proof}
By \cite[Chapter~I, Section~5.5]{Bourbaki:LieGropuCh123}, the kernel of $B$ contains every nilpotent ideal of $\tilde{\g}$, so in particular it contains $\g$. The last formula holds by construction.
\end{proof}

Recall the general formula~\cite[Lemma 1.1]{ContiRossi:EinsteinNilpotent} for the Ricci tensor of a metric Lie algebra $\tilde\g$:
\begin{equation}
\label{eqn:ricciingeneral}
\widetilde{\ric}(v,w)=\frac12\langle \tilde dv^\flat, \tilde dw^\flat\rangle -\frac12\langle \adtilde v, \adtilde w\rangle-\frac12 \Tr\adtilde(v\hook \tilde dw^\flat+w\hook \tilde dv^\flat)^\sharp-\frac12 B(v,w).
\end{equation}

\begin{proposition}
\label{prop:ricciofstandard}
The Ricci tensor of the metric $\langle,\rangle$ on $\tilde \g$ and its restriction to $\g$ are related by
\begin{align*}
\widetilde{\ric}(v,w)&=\ric(v,w)+\sum_s\frac12\epsilon_s\langle[\phi_s,\phi_s^*](v), w\rangle  -\frac12\epsilon_s\langle (\phi_s+\phi_s^*)(v),w\rangle \Tr\phi_s\\
\widetilde{\ric}(v,e_s)&=\frac12\langle \ad v, \phi_s\rangle
\end{align*}
and
\[
\widetilde{\ric}(e_s,e_r)=  - \frac12 \langle \phi_s,\phi_r\rangle -\frac12 \Tr(\phi_s\circ\phi_r).
\]
\end{proposition}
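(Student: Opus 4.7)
The plan is to apply the general Ricci formula \eqref{eqn:ricciingeneral} to $\tilde\g$ and substitute the expressions for $\adtilde$, $\tilde d$, $\Tr\adtilde$, and $B$ given by Lemmas~\ref{lemma:structuresofgandtildeg} and~\ref{lemma:killingofgandtildeg}. All three cases follow by this substitution, but the computations are cleanest when one exploits the orthogonal splitting $\tilde\g=\g\oplus^\perp\lie a$: endomorphisms, $1$-forms, and $2$-forms all decompose into blocks according to this splitting, and the induced inner products respect the decomposition.

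I would start with $\widetilde{\ric}(e_s,e_r)$, which is nearly immediate: since $\tilde de^s=0$, the first term and the contraction term vanish; the middle term gives $-\tfrac12\langle\phi_s,\phi_r\rangle$ because $\adtilde e_s=-\phi_s$ acts by zero on $\lie a$; and the Killing form contributes $-\tfrac12\Tr(\phi_s\circ\phi_r)$ by Lemma~\ref{lemma:killingofgandtildeg}. Next, for $\widetilde{\ric}(v,e_s)$ with $v\in\g$, the $\tilde d$-term still dies, $B(v,e_s)=0$, and in the contraction term I would note that $v\hook\tilde de^s=0$ while $(e_s\hook\tilde dv^\flat)^\sharp=\phi_s^*(v)\in\g$, which is killed by $\Tr\adtilde$; the $\adtilde$-pairing reduces, via the orthogonal block decomposition of $\End(\tilde\g)$, to $\langle\ad v,-\phi_s\rangle_{\End(\g)}$, giving the claimed $\tfrac12\langle\ad v,\phi_s\rangle$.

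The case $\widetilde{\ric}(v,w)$ for $v,w\in\g$ is the main work. I would compute each of the four pieces in turn. Using $\tilde dv^\flat=d_\g v^\flat-\sum_s(\phi_s^*v)^\flat\wedge e^s$ and the orthogonality $\g^*\perp\lie a^*$, the pairing $\langle\tilde dv^\flat,\tilde dw^\flat\rangle$ splits into $\langle d_\g v^\flat,d_\g w^\flat\rangle+\sum_s\epsilon_s\langle\phi_s^*v,\phi_s^*w\rangle$. The block form of $\adtilde v$ yields $\langle\adtilde v,\adtilde w\rangle=\langle\ad v,\ad w\rangle_\g+\sum_s\epsilon_s\langle\phi_s v,\phi_s w\rangle$. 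For the contraction term, $(v\hook\tilde dw^\flat)^\sharp$ has a $\g$-component, which is annihilated by $\Tr\adtilde$, and an $\lie a$-component equal to $-\sum_s\epsilon_s\langle\phi_s v,w\rangle e_s$, contributing $\sum_s\epsilon_s\langle\phi_s v,w\rangle\Tr\phi_s$ after applying $\Tr\adtilde e_s=-\Tr\phi_s$; adding the $w\leftrightarrow v$ swap produces $\sum_s\epsilon_s\langle(\phi_s+\phi_s^*)v,w\rangle\Tr\phi_s$. Finally $B(v,w)=0$ by Lemma~\ref{lemma:killingofgandtildeg}.

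Subtracting off $\ric(v,w)=\tfrac12\langle d_\g v^\flat,d_\g w^\flat\rangle-\tfrac12\langle\ad v,\ad w\rangle_\g$ (which follows from \eqref{eqn:ricciingeneral} applied to the nilpotent $\g$, since $\Tr\ad_\g=0$ and $B_\g=0$) collapses the computation to
\[\widetilde{\ric}(v,w)-\ric(v,w)=\tfrac12\sum_s\epsilon_s\bigl[\langle\phi_s^*v,\phi_s^*w\rangle-\langle\phi_s v,\phi_s w\rangle\bigr]-\tfrac12\sum_s\epsilon_s\langle(\phi_s+\phi_s^*)v,w\rangle\Tr\phi_s.\]
The key identity that closes the argument is the adjoint manipulation $\langle\phi_s^*v,\phi_s^*w\rangle=\langle\phi_s\phi_s^*v,w\rangle$ and $\langle\phi_s v,\phi_s w\rangle=\langle\phi_s^*\phi_s v,w\rangle$, so the bracketed piece equals $\langle[\phi_s,\phi_s^*]v,w\rangle$. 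The main obstacle is simply careful bookkeeping of the $\epsilon_s$'s and of which terms survive the orthogonal projection onto the various blocks; once the block decomposition is set up, every computation is a straightforward substitution.
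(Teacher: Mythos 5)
Your proposal is correct and follows essentially the same route as the paper: apply the general Ricci formula \eqref{eqn:ricciingeneral} together with Lemmas~\ref{lemma:structuresofgandtildeg} and~\ref{lemma:killingofgandtildeg}, split every term along the orthogonal decomposition, and simplify. The only (harmless) difference is that you carry out the final adjoint manipulation turning $\langle\phi_s^*v,\phi_s^*w\rangle-\langle\phi_s v,\phi_s w\rangle$ into $\langle[\phi_s,\phi_s^*]v,w\rangle$ explicitly, and you write out the two cases the paper dismisses as ``similar.''
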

\begin{proof}
Applying~\eqref{eqn:ricciingeneral} to  $v,w\in\g$ and using Lemmas~\ref{lemma:structuresofgandtildeg} and~\ref{lemma:killingofgandtildeg}, we compute
\begin{align*}
\widetilde{\ric}(v,w)
&=\frac12\langle dv^\flat,  dw^\flat\rangle +\sum_s\frac12\epsilon_s\langle\phi_s^*(v), \phi_s^*(w)\rangle-\frac12\langle  \ad v,  \ad w\rangle\\
&\quad-\sum_s\frac12\epsilon_s\langle \phi_s(v),\phi_s(w)\rangle
+\frac12\epsilon_s(\langle\phi_s^*(w),v\rangle+\langle\phi_s^*(v),w\rangle)\Tr\adtilde e_s\\
&=\ric(v,w)+\frac12\sum_s\epsilon_s\bigl(\langle\phi_s^*(v), \phi_s^*(w)\rangle  -\langle \phi_s(v),\phi_s(w)\rangle\bigr)\\
&\quad-\frac12\epsilon_s\bigl(\langle \phi_s(v),w\rangle+\langle \phi_s(w),v\rangle\bigr)\Tr\phi_s.
\end{align*}
The others are similar.
\end{proof}
\begin{remark}
Using Proposition~\ref{prop:ricciofstandard}, we deduce a formula relating the Ricci operator $\widetilde{\Ric}$ of $\tilde \g=\g\ltimes_\phi\R e_0$ and the Ricci operator $\Ric$ of $\g$, extended to $\tilde\g$ by declaring it to be zero on the $\R$ factor:
\begin{equation}\label{eqn:riccioperatorstandard}
\widetilde{\Ric}=\Ric+\frac{1}{2}\epsilon_0\bigl([\phi,\phi^*]-(\phi+\phi^*)\Tr\phi +\langle \ad,\phi\rangle e_0-(\langle \phi,\phi\rangle+\Tr(\phi^2))e^0\otimes e_0 \bigr).
\end{equation}
\end{remark}

The formula of Proposition~\ref{prop:ricciofstandard} simplifies if we assume an additional condition, namely that the $\phi_s$ are \emph{normal}, i.e. $[\phi_s,\phi_s^*]=0$. This condition is automatic in the case of Einstein standard Riemannian Lie algebras, due to the following:
\begin{theorem}[{\cite[Theorem 4.10, Lemma 2.1]{Heber:noncompact}}]
\label{thm:normal}
Let $\tilde \g=\g\oplus\lie{a}$ be a Riemannian, standard, Einstein solvable Lie algebra. Then for any $X\in\lie{a}$, $(\ad X)^*$ is also a derivation commuting with $\ad\lie a$.
\end{theorem}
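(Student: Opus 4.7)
This is a substantive theorem of Heber; my plan follows the broad lines of his argument in~\cite{Heber:noncompact}. For each $s$, decompose $\phi_s = \ad e_s$ (acting on $\g$) as $\phi_s = S_s + N_s$, with $S_s$ self-adjoint and $N_s$ skew-adjoint with respect to the Riemannian inner product, which is possible thanks to positive-definiteness. The conclusion is equivalent to showing $N_s\in\Der\g$ together with $[\phi_s^*,\phi_r]=0$ for all $r,s$; in particular it subsumes the normality condition $[\phi_s,\phi_s^*]=0$.

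First I would extract the consequences of the Einstein condition $\widetilde{\Ric}=\lambda\id$ using the Riemannian specialization ($\epsilon_s=1$) of Proposition~\ref{prop:ricciofstandard}. The mixed-block vanishing $\widetilde{\ric}(v,e_s)=0$ for $v\in\g$ yields the Frobenius-orthogonality relation $\langle\ad v,\phi_s\rangle=0$; the $\lie{a}$-block gives $\langle\phi_r,\phi_s\rangle+\Tr(\phi_r\phi_s)=-2\lambda\delta_{rs}$; and the $\g$-block expresses $\Ric|_\g$ in terms of $\lambda$, the commutators $[\phi_s,\phi_s^*]$, and the trace corrections $(\Tr\phi_s)(\phi_s+\phi_s^*)$. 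Tracing the $\g$-block produces a scalar identity tying $\Tr\Ric|_\g$ to $\sum_s(\Tr\phi_s)^2$ and $\sum_s\|\phi_s\|^2$.

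The main obstacle is deducing normality $[\phi_s,\phi_s^*]=0$: elementary inequalities such as $\Tr(\phi_s^2)\leq\Tr(\phi_s\phi_s^*)$, combined with the diagonal Einstein condition, only yield sign constraints like $\lambda\leq 0$ and are too weak. The route I would take is the Kempf--Ness/moment-map viewpoint: interpret the Lie bracket of $\tilde\g$ as a point $\mu$ in the $\GL(\tilde\g)$-variety of Lie brackets, and show that the Einstein condition is equivalent to $\mu$ being a critical point of a suitably normalized norm-square functional for the $\GL$-action on this variety. By Kempf--Ness, such a critical point lies in a closed orbit and its stabilizer Lie algebra is stable under the Cartan involution $A\mapsto -A^*$. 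Applying this to the stabilizer of $\mu$, which contains the image of $\ad\lie{a}$, delivers both that $(\ad X)^*$ is a derivation for every $X\in\lie{a}$ and that $\ad\lie{a}$ commutes with $(\ad\lie{a})^*$. Making precise the identification of the Einstein equation with moment-map criticality, and in particular handling the normalization and the role of the nonunimodularity of $\tilde\g$, is the substantive content of Heber's argument and is where I would expect the genuine difficulty to lie.
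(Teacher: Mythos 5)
You should first note that the paper does not prove this statement at all: it is imported verbatim from Heber with the citation \cite[Theorem 4.10, Lemma 2.1]{Heber:noncompact}, so there is no in-paper proof to compare against. Your reduction of the claim to ``$N_s=\phi_s^a\in\Der\g$ and $[\phi_s^*,\phi_r]=0$ for all $r,s$'' is correct, and the identities you extract from the Riemannian specialization of Proposition~\ref{prop:ricciofstandard} (vanishing of the mixed block, $\langle\phi_r,\phi_s\rangle+\Tr(\phi_r\phi_s)=-2\lambda\delta_{rs}$, and the $\g$-block formula) are the right starting data. You are also right that naive trace inequalities do not close the argument.

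The genuine gap is in the step you lean on to finish: ``by Kempf--Ness, such a critical point lies in a closed orbit and its stabilizer Lie algebra is stable under $A\mapsto -A^*$.'' That statement is true for \emph{minimal vectors}, i.e.\ zeros of the moment map, whose orbits are closed; it is false for mere critical points of the norm-square of the moment map, which is what the Einstein/soliton condition actually corresponds to. In the present situation the relevant orbit is essentially never closed: the $\GL(\tilde\g)$-orbit of a nonabelian solvable (in particular nilpotent) bracket always contains the abelian bracket in its closure, since conjugating by $t\cdot\id$ scales the bracket to zero. So the Cartan-involution-stability of the stabilizer cannot be invoked this way, and even if $\Der$ were known to be $*$-stable, that alone gives $(\ad X)^*\in\Der$ but not the commutation $[(\ad X)^*,\ad\lie a]=0$. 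The machinery that actually works here is either the stratification theory for critical points of the norm-square of the moment map (Ness--Kirwan over $\C$; its real versions due to Heber, Lauret, B\"ohm--Lafuente, Jablonski), where the critical point is analyzed via its stratum label $\beta$ and an associated $*$-stable parabolic-type decomposition, or Heber's original argument via the first and second variation of the scalar curvature functional on unit-volume left-invariant metrics. As written, the decisive step of your outline would fail, so the proposal does not constitute a proof.
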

The importance of this result is that by~\cite[Proposition 2.5]{Heber:noncompact} (based on~\cite[Lemma 2.1]{Heber:noncompact} and~\cite[Lemma 4.2]{AzencottWilson2}) the $\phi_s$ can then be assumed to be symmetric, up to considering a different, isometric solvmanifold. In part, these arguments apply to the pseudo-Riemannian case too (see Proposition~\ref{prop:pseudoAzencottWilson} below). However, Theorem~\ref{thm:normal} itself does not extend to indefinite signatures of the metric (nor even Lorentzian), as shown by the following:

\begin{example}
\label{ex:adXnonnormale}
Consider the Lie algebra $\tilde\g=\R^3\rtimes_D\Span{e_4}$, with
nonvanishing brackets  given by
\[[e_1,e_4]=-2e_1,\quad [e_2,e_4]= -5 e_2+ 6e_3,\quad [e_3,e_4]= e_3.\]
The Lorentzian metric given by
\[e^1\otimes e^1 +e^2\otimes e^2-e^3\otimes e^3+e^4\otimes e^4\]
is Einstein with Einstein constant $\lambda =-12$; by construction, we have a standard decomposition $\tilde \g=\Span{e_1,e_2,e_3}\oplus^\perp\Span{e_4}$.

We have
\[\ad e_4=
\begin{pmatrix}
 2 & 0 & 0 & 0 \\
 0 & 5 & 0 & 0 \\
 0 & -6 & -1 & 0 \\
 0 & 0 & 0 & 0 \\
\end{pmatrix},
\quad
(\ad e_4)^*=
\begin{pmatrix}
 2  & 0 & 0 & 0 \\
 0 & 5  & 6  & 0 \\
 0 & 0 & -1  & 0 \\
 0 & 0 & 0 & 0 \\
\end{pmatrix};
\]
thus, $\ad e_4$ is not normal; this implies in particular that $(\ad e_4)^*$ is not a derivation of $\tilde \g$, though it is a derivation of the abelian Lie algebra $\Span{e_1,e_2,e_3}$.
\end{example}

\begin{example}\label{ex:AdjointNotDerivationExample1}
Take $\tilde{\g}=(0,2e^{12},e^{13},3e^{14}+e^{23})$, with the metric
\[g_1e^1\otimes e^1+g_2e^2\otimes e^2+g_3e^3\odot e^4.\]
This is an Einstein metric with $\lambda = -12/g_1$. However we have
\[\ad e_1 = \begin{pmatrix} 0 & 0 & 0 &0\\ 0 & -2 & 0 & 0 \\ 0 & 0 &-1 &0 \\ 0& 0 & 0 &- 3 \end{pmatrix}, \quad
(\ad e_1)^* = \begin{pmatrix} 0 & 0 & 0 &0\\ 0 & -2 & 0 & 0 \\ 0 & 0 & -3 &0 \\ 0& 0 & 0 & -1\end{pmatrix}.\]
Thus, $\ad e_1$ is normal, but $(\ad e_1)^*$ is not a derivation.
\end{example}
In the first Example~\ref{ex:adXnonnormale} the signature of the metric on $\g$ and $\tilde{\g}$ is Lorentzian; in the second Example~\ref{ex:AdjointNotDerivationExample1} it varies according to $g_1,g_2$; this shows that the pseudo-Riemannian version of Theorem~\ref{thm:normal} fails even in the indefinite signature which is ``closest'' to Riemannian.

Nevertheless, if one \emph{imposes} that the derivations $\ad X$ are normal, a useful consequence can be drawn. Let $H$ be the metric dual of $v\mapsto \Tr \adtilde v$, i.e.
\begin{equation}\label{eqn:definitionH}
\langle H,v\rangle = \Tr(\adtilde v),\quad v\in\tilde{\g}.
\end{equation}

\begin{lemma}
\label{lemma:derivationphi}
On a solvable Lie algebra with a standard decomposition,
\begin{equation}
\label{eqn:derivationphi}
H=-\sum_s \epsilon_s (\Tr\phi_s) e_s, \qquad \ad H = \sum_s \epsilon_s (\Tr \phi_s)\phi_s.
\end{equation}
\end{lemma}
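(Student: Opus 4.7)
The plan is to use the structural formulas from Lemma~\ref{lemma:structuresofgandtildeg} together with the orthogonality of the decomposition, and simply identify $H$ by evaluating $\langle H,\cdot\rangle$ on a basis of $\tilde\g$.

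First I would show that $H$ lies in $\lie a$. By Lemma~\ref{lemma:structuresofgandtildeg}, $\Tr\adtilde v=0$ for every $v\in\g$, hence $\langle H,v\rangle=0$ for all $v\in\g$, i.e.\ $H\in\g^\perp$. Since $\tilde\g=\g\oplus^\perp\lie a$ is an orthogonal direct sum with respect to a nondegenerate metric, both $\g$ and $\lie a$ are nondegenerate and $\g^\perp=\lie a$, so $H\in\lie a$.

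Next I would write $H=\sum_r c_r e_r$ and test against the orthogonal basis $\{e_1,\dotsc,e_k\}$: on the one hand $\langle H,e_s\rangle=c_s\epsilon_s$, on the other hand $\langle H,e_s\rangle=\Tr\adtilde e_s=-\Tr\phi_s$ by the second part of Lemma~\ref{lemma:structuresofgandtildeg}. This yields $c_s=-\epsilon_s\Tr\phi_s$, giving the first formula of~\eqref{eqn:derivationphi}.

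Finally, the expression for $\ad H$ follows by linearity from $\adtilde e_s=-\phi_s$:
\[
\ad H=\sum_s c_s\adtilde e_s=-\sum_s\bigl(-\epsilon_s\Tr\phi_s\bigr)\phi_s=\sum_s\epsilon_s(\Tr\phi_s)\phi_s,
\]
which establishes the second formula. There is no real obstacle here; the only point worth emphasizing is the nondegeneracy of $\g$, which is implicit in the definition of a standard decomposition as an orthogonal direct sum.
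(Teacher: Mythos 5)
Your proposal is correct and follows essentially the same route as the paper: identify $H$ as an element of $\lie a$ via $\Tr\adtilde v=0$ on $\g$, read off its coefficients against the orthogonal basis using $\Tr\adtilde e_s=-\Tr\phi_s$, and conclude for $\ad H$ by linearity. The only difference is that you spell out the nondegeneracy point explicitly, which the paper leaves implicit.
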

\begin{proof}
For $v\in \g$, we have $\langle H,v\rangle =\Tr\ad v=0$, since $\g$ is nilpotent. On the other hand
\[\langle H,e_s\rangle = -\Tr\phi_s = \Tr \ad e_s ,\]
Hence $H=-\sum_s \epsilon_s (\Tr\phi_s) e_s$. The expression of $\ad H$ is then trivial.
\end{proof}

As shown in \cite[Remark 1.3]{ContiRossi:EinsteinNilpotent}, equation~\eqref{eqn:ricciingeneral} can be rewritten as
\begin{equation}
\label{eqn:ricciingeneral2version}
\widetilde{\ric}(v,w)=\frac12\langle \tilde dv^\flat, \tilde dw^\flat\rangle -\frac12\langle \adtilde v, \adtilde w\rangle+\frac12 \bigl(\langle [v,H],w\rangle + \langle [w,H],v\rangle\bigr)-\frac12 B(v,w).
\end{equation}

\begin{proposition}
\label{prop:subalgebraEinstein}
Let $\tilde \g$ be a solvable Lie algebra with an Einstein metric and a standard decomposition $\tilde \g=\g\oplus^\perp\lie a$; assume that $\ad X$ is normal for all $X$ in $\lie a$. If $\lie{b}$ is a subspace of $\lie a$ containing $H$ such that the restriction of the metric is nondegenerate, then the subalgebra $\g\oplus^\perp\lie{b}\subset\g\oplus^\perp\lie{a}$ is also Einstein.
\end{proposition}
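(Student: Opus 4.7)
The plan is to compute the Ricci tensor of $\tilde\g'=\g\oplus^\perp\lie b$ directly, using Proposition~\ref{prop:ricciofstandard}, and to compare it term-by-term with the restriction of $\widetilde{\ric}$ to $\tilde\g'$, which equals $\lambda\langle,\rangle$ by hypothesis. Since the metric restricted to $\lie b$ is nondegenerate, I first fix an orthogonal basis $\{e_1,\dotsc,e_k\}$ of $\lie a$ whose initial $m$ vectors form an orthogonal basis of $\lie b$; the remaining vectors then span the orthogonal complement of $\lie b$ in $\lie a$.

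The key observation is that the vector $H$ defined by~\eqref{eqn:definitionH}, which by Lemma~\ref{lemma:derivationphi} equals $-\sum_{s=1}^{k}\epsilon_s(\Tr\phi_s)e_s$, must lie in $\lie b$ by hypothesis; since $\{e_{m+1},\dotsc,e_k\}$ is orthogonal to $\lie b$, the components of $H$ along these vectors vanish, which forces $\Tr\phi_s=0$ for every $s>m$. In particular, the vector $H'$ associated to the standard decomposition $\g\oplus^\perp\lie b$ of $\tilde\g'$ coincides with $H$, and the derivations $\phi_s$ for $s\le m$ are unchanged since they are defined by the same brackets in the subalgebra.

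Combining Proposition~\ref{prop:ricciofstandard} with the normality hypothesis $[\phi_s,\phi_s^*]=0$, the Ricci tensor on $\g\times\g$ simplifies to
\[\widetilde{\ric}(v,w)=\ric(v,w)-\tfrac12\sum_{s=1}^{k}\epsilon_s(\Tr\phi_s)\langle(\phi_s+\phi_s^*)(v),w\rangle,\]
and the analogous expression for $\widetilde{\ric}'$ has the sum truncated at $s=m$; the trace-vanishing just established ensures that the two coincide. The other components of the Ricci tensor, namely $\widetilde{\ric}(v,e_s)=\tfrac12\langle\ad v,\phi_s\rangle$ and $\widetilde{\ric}(e_s,e_r)=-\tfrac12\langle\phi_s,\phi_r\rangle-\tfrac12\Tr(\phi_s\circ\phi_r)$, depend only on the individual $\phi_s$ with $s\le m$, so they transfer verbatim from $\tilde\g$ to $\tilde\g'$ on the range of indices relevant to $\lie b$. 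Since $\widetilde{\ric}=\lambda\langle,\rangle$ on all of $\tilde\g$, we conclude $\widetilde{\ric}'=\lambda\langle,\rangle$ on $\tilde\g'$, so that $\g\oplus^\perp\lie b$ is Einstein with the same Einstein constant. The only nontrivial step is the identification $H'=H$; the normality hypothesis merely eliminates the $[\phi_s,\phi_s^*]$ commutator terms that would otherwise obstruct a direct comparison between $\widetilde{\ric}$ and $\widetilde{\ric}'$.
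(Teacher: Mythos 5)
Your proof is correct and follows essentially the same route as the paper: choose an orthogonal basis of $\lie a$ adapted to $\lie b$, use $H\in\lie b$ to deduce $\Tr\phi_s=0$ on the complementary indices, invoke normality to remove the $[\phi_s,\phi_s^*]$ terms in Proposition~\ref{prop:ricciofstandard}, and observe that the Ricci tensor of $\g\oplus^\perp\lie b$ coincides with the restriction of that of $\tilde\g$. The only addition beyond the paper's argument is your explicit remark that $H'=H$, which is a harmless (and correct) clarification.
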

\begin{proof}
We can choose the orthonormal basis $\{e_s\}$ so that $\lie b=\Span{e_1,\dotsc, e_k}$; we allow $k=0$ for the case in which both $H$ and $\lie b$ are zero. Since $H\in\lie b$, we have $\Tr\phi_s=-\langle e_s,H\rangle=0$ for $s>k$. Using the fact that the $\phi_s$ are normal, i.e. $[\phi_s,\phi_s^*]=0$,
the formulae of Proposition~\ref{prop:ricciofstandard} simplify to
\begin{gather*}
\widetilde{\ric}(v,w)=\ric(v,w)
-\frac12\sum_r\epsilon_r\bigl(\langle (\phi_r+\phi_r^*)(v),w\rangle)\Tr\phi_r,\\
 \widetilde{\ric}(v,e_s)=\frac12\langle \ad v, \phi_s\rangle
\end{gather*}
and
\[
\widetilde{\ric}(e_s,e_r)=  - \frac12 \langle \phi_s,\phi_r\rangle -\frac12 \Tr(\phi_s\circ\phi_r).
\]
Thus, the Ricci tensor of $\g\oplus^\perp
\lie b$ coincides with the restriction of the Ricci of $\g\oplus^\perp\lie a$, which is a multiple of the metric; hence, $\g\oplus^\perp\lie b$ is Einstein.
\end{proof}
\begin{remark}
In the unimodular case, i.e. for $H=0$, we can choose $\lie{b}=\{0\}$ in Proposition~\ref{prop:subalgebraEinstein}; this proves that when $\tilde\g=\g\oplus^\perp\lie a$ is a unimodular Einstein solvable Lie algebra with a standard decomposition, then $\g$ is also Einstein. We refer to Example~\ref{ex:einsteindentroeinstein} for such an example.
\end{remark}

\begin{remark}
If the Lie algebra is not unimodular, $\langle H,H\rangle$ can be zero or not. If it is zero, the subalgebra $\lie b$ must be taken of dimension at least two (compare with Proposition~\ref{prop:extendnil1}). If if it nonzero, one obtains that $\g\oplus^\perp\Span{H}$ is Einstein, in analogy with the Riemannian case.
\end{remark}

The condition appearing in the statement of Theorem~\ref{thm:normal}, namely that each $(\ad X)^*$ is also a derivation commuting with $\ad \lie{a}$, is strictly stronger than imposing that each $\ad X$ is normal, as can be seen from Example~\ref{ex:AdjointNotDerivationExample1}. If this stronger condition is imposed, it turns out that the $\ad X$ can be assumed to be symmetric.

Indeed, given a metric Lie algebra with a standard decomposition $\tilde \g=\g\oplus^\perp\lie a$, for a given endomorphism $f\colon\g\to\g$, let $f=f^s+f^a$ be the decomposition into a self-adjoint and an anti-selfadjoint part. Define
\[\chi\colon\lie a\to\gl(\g), \quad \chi(X)=(\ad X)^s.\]

In analogy to the Riemannian case (see \cite[Section 1.8]{EberleinHeber}), we have:
\begin{proposition}
\label{prop:pseudoAzencottWilson}
Let $\tilde \g$ be a pseudo-Riemannian Lie algebra with a standard decomposition $\tilde{\g}=\g\oplus^\perp \lie{a}$, and suppose that, for every $X$ in $\lie{a}$, $(\ad X)^*$ is a derivation of $\g$ commuting with $\ad\lie{a}$. Let $\tilde{\g}^*$ be the solvable Lie algebra $\g\rtimes_\chi\lie{a}$. Then there is an isometry between the connected, simply connected Lie groups with Lie algebras $\tilde{\g}$ and $\tilde{\g}^*$, with the corresponding left-invariant metrics, whose differential at $e$ is the identity of $\g\oplus\lie{a}$ as a vector space.
\end{proposition}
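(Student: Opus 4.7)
The plan is to mimic the Azencott--Wilson construction, adapting it to the indefinite setting. First I would unpack the hypothesis. Since $\lie a$ is abelian, the operators $\ad X$ with $X\in\lie a$ commute; combining this with the assumption that each $(\ad X)^*$ is also a derivation commuting with $\ad \lie a$, the family $\{\ad X:X\in\lie a\}$ consists of pairwise commuting \emph{normal} operators on $(\g,\langle,\rangle|_\g)$, so the antiselfadjoint parts $(\ad X)^a=\frac12(\ad X-(\ad X)^*)$ are derivations of $\g$ which are skew with respect to $\langle,\rangle|_\g$ and pairwise commute. Hence $\psi\colon\lie a\to\Der(\g)\cap\so(\g,\langle,\rangle|_\g)$, $X\mapsto(\ad X)^a$, is a Lie algebra homomorphism with abelian image.

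Next I would integrate $\psi$. Because $\g$ is nilpotent the simply connected Lie group $G$ with Lie algebra $\g$ has the exponential as a global diffeomorphism, so every derivation of $\g$ lifts uniquely to a one-parameter group of automorphisms of $G$ which moreover induces an isometry of the left-invariant metric associated to $\langle,\rangle|_\g$. Since the $(\ad X)^a$ commute, the lifts combine into a single homomorphism $\Psi\colon A\to\Aut(G)$ with $\Psi(\exp X)=\exp((\ad X)^a)$, whose differentials at the identity lie in $O(\g,\langle,\rangle|_\g)$. Using the global product manifold structure $\tilde G=G\times A$ (and similarly $\tilde G^*=G\times A$, with different multiplication), I would then \emph{define}
\[F\colon \tilde G^*\to\tilde G,\qquad F(g,a)=(\Psi(a)(g),a).\]
This is a diffeomorphism with smooth inverse $(g,a)\mapsto(\Psi(a)^{-1}(g),a)$, and at $(e_G,e_A)$ its differential is the identity of $\g\oplus\lie a$: the $\g$-direction contributes $\Psi(e_A)=\id$, while in the $\lie a$-direction $\Psi(\exp tX)$ fixes $e_G$ so the derivative in the first factor vanishes.

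Finally I would check that $F$ is a pointwise isometry. By left-invariance of both metrics, it suffices to verify that the map
\[dL_{F(p)^{-1}}\circ dF_p\circ dL_p\colon T_e\tilde g^*\to T_e\tilde g,\]
identified with an endomorphism of $\g\oplus\lie a$, is an isometry of $\langle,\rangle$ for every $p=(g_0,a_0)$. The key algebraic identity is the normal-operator factorization
\[\exp(\ad X)=\exp(\chi(X))\exp((\ad X)^a),\]
valid because $(\ad X)^s$ and $(\ad X)^a$ commute, which translates into the statement that on $G$ the action $\varphi_{\tilde G}(a_0)$ equals $\varphi_{\tilde G^*}(a_0)\circ\Psi(a_0)$. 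Plugging this into the expressions for left translation in $\tilde G$ and $\tilde G^*$, a direct computation collapses the triple composition above to $\Psi(a_0)\oplus\id_{\lie a}$ on $\g\oplus\lie a$. Since $\Psi(a_0)\in O(\g,\langle,\rangle|_\g)$ by construction and $\g\oplus^\perp\lie a$ is an orthogonal decomposition, this is an isometry of $(\g\oplus\lie a,\langle,\rangle)$.

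The main obstacle is the bookkeeping for the pointwise isometry check: one must correctly identify the two (different) semidirect-product multiplications on $G\times A$ and verify that the discrepancy between $\varphi_{\tilde G}$ and $\varphi_{\tilde G^*}$ is exactly absorbed by the twist $\Psi$. Everything hinges on the commutativity of $(\ad X)^s$ with $(\ad X)^a$, which in turn follows from the normality of $\ad X$, itself a consequence of the hypothesis on $(\ad X)^*$; the rest is formal and parallels the Riemannian argument of Heber and Azencott--Wilson, with no use of positive-definiteness beyond the fact that a skew-adjoint derivation exponentiates to an isometric automorphism, which holds in any signature.
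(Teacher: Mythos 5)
Your preliminary analysis is sound and matches the paper's setup: the hypothesis makes each $\ad X$ normal, the skew parts $(\ad X)^a$ are pairwise commuting derivations of $\g$, and they integrate (using nilpotency of $\g$) to a commuting family of automorphisms of $G$ that are isometries of the left-invariant metric on $G$. The paper packages exactly this data differently, embedding $\tilde\g^*$ into $\Der(\tilde\g)\ltimes\tilde\g$ via $v+X\mapsto(-(\ad X)^a,v+X)$ and realizing the isometry as the orbit map of a simply transitive isometric action of the corresponding subgroup of $\Aut(\tilde G)\ltimes\tilde G$. The gap is in your explicit map and the final ``direct computation,'' which you assert but do not carry out. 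Identify both $\tilde G$ and $\tilde G^*$ with $G\times A$ via $(g,\exp X)\mapsto g\exp X$. Then left translation by $p=(g_0,a_0)$, $a_0=\exp X_0$, has differential $(v,X)\mapsto(dL_{g_0}(e^{\ad X_0}v),dL_{a_0}X)$ in $\tilde G$ and $(v,X)\mapsto(dL_{g_0}(e^{\chi(X_0)}v),dL_{a_0}X)$ in $\tilde G^*$, so already the \emph{identity} map of $G\times A$ compares the two left-invariant metrics by $(v,X)\mapsto(e^{-(\ad X_0)^a}v,X)$, an isometry since $(\ad X_0)^a$ is anti-selfadjoint on $\g$ and $\g\perp\lie a$; this identity map is precisely the paper's orbit map in these coordinates, and no twist is needed. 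Your $F(g,a)=(\Psi(a)(g),a)$ is a genuinely different map, and for it the triple composition $dL_{F(p)^{-1}}\circ dF_p\circ dL_p$ does not collapse to $\Psi(a_0)\oplus\id$: you have dropped the term coming from differentiating $a\mapsto\Psi(a)(g_0)$ at a point $g_0=\exp w\neq e$, which contributes a shear $(v,X)\mapsto(v+B(X),X)$ with $B(X)$ built from $(\ad X)^a w$. A nonzero shear is never an isometry of $\g\oplus^\perp\lie a$: expanding $\langle v+BX,v+BX\rangle+\langle X,X\rangle=\langle v,v\rangle+\langle X,X\rangle$ for all $v$ forces $BX=0$ by nondegeneracy of the metric on $\g$.

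A concrete counterexample to your formula: take $\g=\R^2$ with the Euclidean metric and $\lie a=\R X$ with $\ad X=J$ the rotation generator, so $(\ad X)^*=-J$ is a derivation commuting with $\ad\lie a$ and $\chi(X)=0$; here $\tilde G$ is the universal cover of the Euclidean group of the plane and $\tilde G^*=\R^3$. In the coordinates $(u,\theta)$ both left-invariant metrics equal the flat metric $du_1^2+du_2^2+d\theta^2$, so the identity map is the required isometry, whereas your map $F(u,\theta)=(R_\theta u,\theta)$ satisfies $|dF(\partial_\theta)|^2=1+|u|^2$ and is not an isometry; switching the sign of $\Psi$ or passing to the parametrization $(g,a)\mapsto ag$ does not repair this. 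The fix is either to drop the twist and verify directly that the identity of $G\times A$ works (a short computation with the two left translations, as above), or to follow the paper's route through the isometric action of $\Aut(\tilde G)\ltimes\tilde G$, which avoids all coordinate bookkeeping.
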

\begin{proof}
The proof follows~\cite{EberleinHeber} and \cite[Lemma 4.2]{AzencottWilson2}.

Observe first that for every $X$ in $\lie a$, $\chi(X)=(\ad X)^s$ is a derivation of $\g$ that commutes with $\ad \lie a$, and therefore a derivation of $\tilde \g$. In addition, the image of $\chi$ is an abelian subalgebra and $\chi$ is a homomorphism. Therefore, the semidirect product
$\g\rtimes_\chi\lie a$ is well defined.

The simply connected Lie group $\tilde G$ with Lie algebra $\tilde \g$ has the form $\tilde G=(\exp\g)(\exp \lie a)$. Consider the group
\[H=\Aut(\tilde G)\ltimes \tilde G,\]
with product law
\[(\psi,\tilde{g})(\psi',\tilde{g}')=(\psi \psi',\tilde{g}\psi(\tilde{g}')).\]
The Lie algebra of $H$ is $\lie h=\Der(\tilde \g)\ltimes\tilde \g$, with Lie bracket
\[[(\eta,Y),(\eta',Y')]=([\eta,\eta'],\eta(Y')-\eta'(Y)+[Y, Y']).\]
We let $H$ act on $\tilde G$ by
\[\rho((\psi,\tilde g),\tilde g')=\tilde g\psi (\tilde g').\]
For $X$ in $\lie a$, write $\ad X=(\ad X)^a+(\ad X)^s$, where $\chi(X)=(\ad X)^s$ is the self-adjoint part of $\ad X$; relative to the standard decomposition $\tilde \g=\g\oplus\lie a$, we have
\begin{equation}
\label{eqn:adXaXzero}
 (\ad X)^a=\begin{pmatrix} * & 0 \\ 0 & 0 \end{pmatrix}.
 \end{equation}
Define
\[f\colon\tilde \g\to \Der(\tilde{\g})\times \tilde \g, \quad f(v+ X)=(-(\ad X)^a,v+ X), \;v\in \g, X\in\lie a.\]
The image of $f$ is a subalgebra of $\lie h$ isomorphic to $\g\rtimes_\phi\lie a$: for $v,v'$ in $\g$, $X,X'$ in $\lie a$, we have
\begin{align*}
[f(v),f(v')]&=f([v,v'])\\
[f(X),f(v)]&=[(-(\ad X)^a,X),(0,v)]
=(0,-(\ad X)^a(v)+[X,v])\\
&=(0,(\ad X)^s(v))=f(\chi(X)(v))\\
[f(X),f(X')]&=[(-(\ad X)^a,X),-(\ad X')^a,X')]\\
&=([(\ad X)^a,(\ad X')^a],[X,X'])=0,
\end{align*}
where we have used~\eqref{eqn:adXaXzero} and the fact that
\[0=[\ad X,\ad X']^a=[(\ad X)^a,(\ad X')^a]+[(\ad X)^s,(\ad X')^s].\]

Call $G^*$ the connected subgroup of $H$ with Lie algebra $\tilde \g^*=f(\tilde\g)$. Since $(\ad X)^a$ is anti-selfadjoint and the metric is left-invariant, the action of $G^*$ on $\tilde G$ preserves the metric.

Now observe that for $v$ in $\g$ we have $\rho(\exp f(v),\tilde g)=(\exp v)\tilde g$, and for $X$ in $\lie a$ we get
\begin{multline*}
\rho\bigl(\exp f(X),\tilde g\bigr)=\rho\bigl(\exp (-(\ad X)^a,X),\tilde g\bigr)\\
= \rho\bigl(( \exp (-\ad X)^a,\exp X),\tilde g\bigr)
= \exp X \exp (-\ad X)^a (\tilde g),
\end{multline*}
where we use the fact that $((\ad X)^a,0)$ and $(0,X)$ commute thanks to~\eqref{eqn:adXaXzero}.

We claim that the action of $G^*$ is transitive. Since $\tilde G$ is connected, it suffices to prove that all orbits are open. For fixed $\tilde g\in \tilde G$, we must show the surjectivity of the map
\[d\rho_{e,\tilde g}\colon T_{e}G^*\times \{0\}\to T_{\tilde g}\tilde G.\]
It is clear that
\begin{equation}
 \label{eq:differentialv}
d\rho_{e,\tilde g}(f(v),0)=R_{\tilde g*}v, \quad v\in\g.
\end{equation}
In addition,
\begin{equation}
 \label{eq:differentialX}
d\rho_{e,\tilde g}(f(X),0)=R_{\tilde g*}\left(X+ \frac{d}{dt}|_{t=0}\exp(-\ad tX)^a(\tilde g){\tilde g}^{-1}\right).
\end{equation}
We can write the general element of $\tilde G$ as $\tilde g=gh$, with $g$ in $\exp\g$ and $h$ in $\exp\lie a$; then, using~\eqref{eqn:adXaXzero},
\[
\exp(-\ad tX)^a(\tilde g)\tilde g^{-1}
=\exp(-\ad tX)^a(g) h h^{-1} g^{-1}
=\exp(-\ad tX)^a(g)g^{-1}\in G.
\]
Summing up, $d\rho_{e,\tilde g}(f(X),0)$ is in $R_{\tilde g*}(X+\lie g)$; together with~\eqref{eq:differentialv}, this shows that orbits are open, i.e. the action is homogeneous. Thus, we obtain a covering map $G^*\to \tilde G$; since $\tilde G$ is simply connected, this is a diffeomorphism, inducing a left-invariant metric on $G^*$.

Substituting $\tilde g=e$ in~\eqref{eq:differentialv} and~\eqref{eq:differentialX} shows that the differential at $e$ of the diffeomorphism $G^*\to\tilde G$ is the identity; therefore the pull-back metric on $G^*$ is the same as the left-invariant metric determined by the metric on $\tilde \g^*\cong \tilde \g$.
\end{proof}
Lie algebras of the form $\g\rtimes_\chi\lie a$ as obtained applying Proposition~\ref{prop:pseudoAzencottWilson} will be studied in Section~\ref{sec:correspondence}. Notice however that not all Einstein Lie algebras with a standard decomposition satisfy the hypotheses of Proposition~\ref{prop:pseudoAzencottWilson} (see Examples~\ref{ex:adXnonnormale} and~\ref{ex:AdjointNotDerivationExample1}).

\begin{example}
\label{ex:heisenberg}
Take the Heisenberg Lie algebra $\lie{h}=(0,0,e^{12})$ with a pseudo-Riemannian metric $e^1\otimes e^1+e^2\otimes e^2+g_3e^3\otimes e^3$. Then
\[\Ric=
 \frac{g_3}{2}
 \begin{pmatrix}
-1& 0 & 0\\
0&-1&0\\
0&0&1
\end{pmatrix},\]
and for all $h,k$
\[\phi=
\begin{pmatrix}
1&h+k&0\\h-k&1&0\\0&0&2
\end{pmatrix}
\]
is a derivation, whose symmetric and anti-symmetric parts are
\[\phi^a=
\begin{pmatrix}
0&k&0\\-k&0&0\\0&0&0
\end{pmatrix},
\quad
\phi^s=
\begin{pmatrix}
1&h&0\\h&1&0\\0&0&2
\end{pmatrix}.
\]
For $h=0$, $\phi$ is normal and $\phi^s$ is again a derivation, so $\lie{h}\rtimes_\phi\R$ and $\lie{h}\rtimes_{\phi^s}\R$ are isometric by Proposition~\ref{prop:pseudoAzencottWilson}.

Now assume $h=0$ and, for definiteness, $k=1$. Both $\lie{h}\rtimes_\phi\R$ and $\lie{h}\rtimes_{\phi^s}\R$ carry the Einstein metrics
\[
 e^1\otimes e^1+e^2\otimes e^2\pm (e^3\otimes e^3+\frac14 e^0\otimes e^0).
\]
Recall that a real solvable Lie algebra $\g$ is \emph{completely solvable} (or \emph{split solvable}) if and only if the eigenvalues of all $\ad X$, $X \in \g$, are in $\R$ (see \cite[Corollary 1.30]{Knapp:LieBook}). Hence we observe that $\lie{h}\rtimes_\phi\R$ and $\lie{h}\rtimes_{\phi^s}\R$ are not isomorphic Lie algebras, since only the latter is completely solvable, because the eigenvalues of $\phi^s$ are in $\R$ while the eigenvalues of $\phi$ are complex.
\end{example}

\section{Nilsolitons}
\label{sec:nilsolitons}
A nilpotent Lie algebra $\g$ with a fixed pseudo-Riemannian metric $g$ is a nilsoliton if
\[\Ric=\lambda \id+D, \quad \lambda\in\R,\ D\in\Der\g.\]
This equation forces $D$ to be self-adjoint, but not necessarily semisimple.

Recall that $\gl(n,\R)$ has a natural nondegenerate scalar product defined by
\[\langle X,Y\rangle_{\Tr}=\Tr (X\circ Y);\]
this restricts to a scalar product on $\Der\g$, which may be degenerate.

Recall from~\cite{Nikolayevsky} that a Nikolayevsky derivation is a semisimple derivation $N$ such that
\begin{equation}
 \label{eqn:nik}
\Tr{X}=\Tr{(N\circ X)}, \quad X\in\Der\g.
\end{equation}
It is customary to refer to $N$ as \emph{``the'' Nikolayevsky derivation} (or \emph{``the'' pre-Einstein derivation}), because it is unique up to Lie algebra automorphisms.

\begin{theorem}
\label{thm:nilsolitonsandnik}
Let $g$ be a nilsoliton metric on a nilpotent Lie algebra $\g$. Then either
\begin{enumerate}
\item $\lambda=0$ and $D$ is a nilpotent derivation in the null space of $\Der\g$; or
\item $\lambda\neq0$ and setting $\tilde D=-\frac1\lambda D$, we have
\[\Tr(X)=\Tr(\tilde D\circ X),  \quad X\in\Der\g;\]
relative to the Jordan decomposition $\tilde D=\tilde D_s+\tilde D_n$, $\tilde D_s$ is a Nikolayevsky derivation and $\tilde D_n$ a nilpotent derivation in the null space of $\Der\g$.
\end{enumerate}
In either case, the eigenvalues of $D$ are rational.
\end{theorem}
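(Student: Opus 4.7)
The plan is to extract both cases of the dichotomy and the rationality assertion from the single trace identity
\begin{equation*}
\Tr(\Ric\circ X)=0\qquad\text{for every }X\in\Der\g,
\end{equation*}
valid for any pseudo-Riemannian metric on a nilpotent Lie algebra. I would derive it from~\eqref{eqn:ricciingeneral} applied with $\tilde\g=\g$: the Killing form of $\g$ vanishes by nilpotency, the term involving $\Tr\adtilde$ vanishes because $\ad v$ is nilpotent for $v\in\g$, and the remaining pair of terms contract to zero when summed against a derivation. A more conceptual justification recognises $\Ric$ as a multiple of the moment map for the $\GL(\g)$-action on the space of Lie brackets: a derivation annihilates the bracket, hence its pairing with the moment map vanishes. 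Substituting $\Ric=\lambda\id+D$ rewrites the identity as
\begin{equation*}
\lambda\Tr X+\langle D,X\rangle_{\Tr{}}=0\qquad\text{for all }X\in\Der\g.\quad(\star)
\end{equation*}

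When $\lambda=0$, ($\star$) states precisely that $D$ lies in the null space of $\langle,\rangle_{\Tr{}}$ on $\Der\g$. Nilpotence of $D$ then follows from the structural fact, due to Nikolayevsky, that this null space consists entirely of nilpotent endomorphisms; the key ingredient is that $\Aut\g$ is algebraic, so that the Jordan--Chevalley decomposition of any $Y\in\Der\g$ lives inside $\Der\g$, combined with the trace form being nondegenerate on a Levi complement of the unipotent radical. All eigenvalues of a nilpotent $D$ are zero, in particular rational.

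When $\lambda\neq 0$, setting $\tilde D=-D/\lambda$ recasts ($\star$) as the defining identity $\Tr X=\Tr(\tilde D\circ X)$ of a pre-Einstein derivation. Nikolayevsky's key lemma in~\cite{Nikolayevsky} asserts that the semisimple part $\tilde D_s$ of such a $\tilde D$ (both pieces of the Jordan decomposition lie in $\Der\g$ by algebraicity) satisfies the same identity, so $\tilde D_s$ is itself a Nikolayevsky derivation, while $\tilde D_n=\tilde D-\tilde D_s$ lies in the null space of $\Der\g$. By~\cite{Nikolayevsky}, the eigenvalues of $\tilde D_s$ are rational; since $\tilde D_n$ is nilpotent and commutes with $\tilde D_s$, the multisets of eigenvalues of $\tilde D$ and $\tilde D_s$ coincide, so the eigenvalues of $D=-\lambda\tilde D$ are rational multiples of $\lambda$.

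The main subtlety is the step separating the Jordan components of $\tilde D$ inside $\Der\g$ and pairing them correctly with the trace identity. In Riemannian signature this is essentially trivial because $D$ is symmetric, hence semisimple with real spectrum; in indefinite signature $D$ can have complex eigenvalues and a genuinely nontrivial nilpotent part, so the argument truly relies on the algebraic-group machinery (algebraicity of $\Aut\g$, Jordan--Chevalley inside $\Der\g$, Mostow--Malcev uniqueness of Levi subalgebras) underpinning Nikolayevsky's theorem.
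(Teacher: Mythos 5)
Your proposal is correct and follows essentially the same route as the paper: both hinge on the identity $\lambda\Tr X+\Tr(D\circ X)=0$ for all $X\in\Der\g$ (the paper cites $\langle\Ric,X\rangle_{\Tr{}}=\frac14\llangle Xd,d\rrangle$ from an earlier work, which is exactly the moment-map vanishing you invoke), then identify the null space of $\langle,\rangle_{\Tr{}}$ on $\Der\g$ with the nilpotent derivations via algebraicity and the Levi--Chevalley decomposition, and finally split $\tilde D$ by its Jordan decomposition inside $\Der\g$. The only difference is one of sourcing: the paper reproves the null-space-equals-nilpotent-ideal fact in detail (Cartan's criterion, Lie's theorem, the filtration argument for $\lie s\otimes\lie n$), whereas you outsource it to Nikolayevsky; the mathematical content is the same.
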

\begin{proof}
By \cite[Theorem 3.8]{ContiRossi:EinsteinNilpotent}, we have
\[\langle \Ric, X\rangle_{\Tr{}} = \frac14\llangle Xd,d\rrangle,\]
where $\llangle,\rrangle$ is the indefinite scalar product on $\Lambda^2\R^n\otimes\R^n$ induced by $g$.

In particular, for any $X\in\Der\g$ we have $\langle \Ric,X\rangle_{\Tr}=0$. Imposing the nilsoliton condition we find
\[0=\langle \lambda \id+D,X\rangle_{\Tr} = \lambda\Tr(X)+\Tr(D\circ X).\]

So if $\lambda=0$ we see that $D$ is in the null space of $\Der\g$. As observed in \cite[Proof of Theorem 1]{Nikolayevsky}, this space consists of nilpotent derivations: indeed, take the Levi decomposition $\Der \g = \lie{s}\ltimes\lie{r}$, with $\lie s$ semisimple and $\lie r$ the radical. Since $\Der \g$ is algebraic, by \cite[Theorem 4]{Chevalley} we can write $\lie{r}=\lie a\ltimes\lie n$, where $\lie a$ consists of semisimple derivations, $\lie n$ of nilpotent derivations, and $[\lie s,\lie a]=0$; in addition, $\lie n$ is the nilradical of $\Der\g$ and $\lie r$ is algebraic.

Since $\langle,\rangle_{\Tr}$ is ad-invariant, its null space is an ideal; by Cartan's criterion, it is also solvable, so it is contained in $\lie r$.

It is clear that $\langle,\rangle_{\Tr}$ is nondegenerate on $\lie a$.
The restriction to $\lie r\otimes \lie n$ is zero by Lie's theorem. In order to prove that the null space is $\lie n$, it remains to show that the restriction to $\lie s\otimes \lie n$ is also zero. For $k>0$, denote by $V^k$ the space of elements $v$ in $\g$ such that $f_1\dotsm f_k(v)=0$ for $f_1,\dotsc, f_k\in \lie n$, and let $V^0=\{0\}$. Since each $f\in\lie n$ maps $V^k$ to
$V^{k-1}$ and each $g\in\lie s$ preserves the $V^k$, we see that $\langle f,g\rangle_{\Tr{}}=0$.

If $\lambda\neq0$, it is clear that $\tilde D$ satisfies~\eqref{eqn:nik}. Therefore, if $N\in\lie r$ is a Nikolayevsky derivation, $\tilde D -N$ is in the null space of $\Der \g$; in particular, $\tilde D$ is in $\lie r$. Writing the Jordan decomposition $\tilde D=\tilde D_s+\tilde D_n$ in the algebraic Lie algebra $\lie r$, one sees that $\tilde D_n$ must belong to $\lie n$, so $\tilde D_s$ is a semisimple derivation satisfying~\eqref{eqn:nik}. Thus, the eigenvalues of $\tilde D$ are also eigenvalues as $N$, hence rational by~\cite{Nikolayevsky}.
\end{proof}

\begin{corollary}\label{cor:TracciaDquadro}
On a nilsoliton the following equations hold:
\begin{gather*}
\Tr D^2 = -\lambda\Tr D,\\
\Tr\Ric^2=\lambda \Tr\Ric.
\end{gather*}
\end{corollary}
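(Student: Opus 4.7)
The plan is to exploit the identity that already appeared in the proof of Theorem~\ref{thm:nilsolitonsandnik}, namely that $\langle \Ric, X\rangle_{\Tr{}} = 0$ for every $X \in \Der\g$ (a consequence of \cite[Theorem 3.8]{ContiRossi:EinsteinNilpotent} combined with the fact that, on a nilpotent Lie algebra, $Xd = 0$ when $X$ is a derivation). Since $D$ is itself a derivation, we may simply plug in $X = D$.

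First, I would write
\[0 = \langle \Ric, D\rangle_{\Tr{}} = \langle \lambda \id + D, D\rangle_{\Tr{}} = \lambda \Tr D + \Tr D^2,\]
which immediately gives the first identity $\Tr D^2 = -\lambda \Tr D$. This is valid in both branches of Theorem~\ref{thm:nilsolitonsandnik}: in the case $\lambda = 0$, it reduces to $\Tr D^2 = 0$, which is also visible directly since $D$ is then nilpotent.

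For the second identity, expand the square $\Ric^2 = \lambda^2 \id + 2\lambda D + D^2$, take the trace, and substitute the first identity:
\[\Tr \Ric^2 = \lambda^2 \dim\g + 2\lambda \Tr D + \Tr D^2 = \lambda^2 \dim\g + \lambda \Tr D.\]
Comparing with $\lambda \Tr \Ric = \lambda(\lambda \dim\g + \Tr D)$ yields the desired equality. There is no real obstacle here; the only subtle point is that one must apply the orthogonality $\langle \Ric, X\rangle_{\Tr{}}=0$ to the element $D\in\Der\g$ rather than to $\Ric$ itself (which need not be a derivation).
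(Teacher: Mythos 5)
Your proof is correct and follows essentially the same route as the paper: both identities come from the orthogonality $\langle \Ric, X\rangle_{\Tr{}}=0$ for derivations $X$, applied with $X=D$. The paper obtains the second identity slightly more directly by writing $\Tr\Ric^2=\langle\Ric,\lambda\id+D\rangle_{\Tr{}}=\lambda\Tr\Ric+\langle\Ric,D\rangle_{\Tr{}}$, but your expansion of the square combined with the first identity is an equivalent rearrangement of the same computation.
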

\begin{proof}
The first equation was proved in the proof of Theorem~\ref{thm:nilsolitonsandnik}, and for the second we have:
\[\Tr \Ric^2 =\langle \Ric, \Ric\rangle_{\Tr}=\langle \Ric,\lambda\id+D\rangle_{\Tr}=\lambda \Tr\Ric+\langle\Ric,D\rangle_{\Tr}= \lambda\Tr \Ric.\qedhere\]
\end{proof}

Thus, the indefinite nilsoliton equation corresponds to four different situations:
\begin{enumerate}[label={$(\mathrm{Nil\arabic*})$}]
\item\label{cond:nil1} $\lambda=0$, $D=0$. This is the Ricci-flat case, examples of which exist in abundance (see e.g.~\cite{ContidelBarcoRossi:SigmaType}).
\item\label{cond:nil2} $\lambda=0$, $D\neq0$. In this case, notice that $D$ is not semisimple, and therefore not a multiple of a Nikolayevsky derivation. Indeed, Theorem~\ref{thm:nilsolitonsandnik} forces $D$ to be nilpotent.
\item\label{cond:nil3} $\lambda\neq0$, $D=0$. This is the Einstein case, studied e.g. in \cite{ContiRossi:EinsteinNilpotent}.
\item\label{cond:nil4} $\lambda\neq0$, $D\neq0$. This is the case that resembles most the Riemannian situation. In this case, if $D$ is semisimple, it is a multiple of a Nikolayevsky derivation. However, since we do not assume the metric to be positive definite, there is no general reason to assume that $D$ is semisimple.
\end{enumerate}

\begin{remark}
An indefinite nilsoliton such that $\Tr D\neq 0$ must belong to~\ref{cond:nil4}. In fact, if $\Tr D\neq 0$ then $D$ is not a nilpotent derivation, hence the nilsoliton must correspond to the second case in  Theorem~\ref{thm:nilsolitonsandnik}, i.e. with $\lambda\neq0$. In conclusion, $D\neq0$ and $\lambda\neq0$ correspond to the case~\ref{cond:nil4}.
\end{remark}

As an example of the condition~\ref{cond:nil2}, we have the following:
\begin{example}\label{ex:Nil2dim4}
Take $\g=(0,0,e^{12},e^{13})$, with $g=g_1e^1\otimes e^1+g_2e^2\odot e^4+g_3e^3\otimes e^3$.

Then
\[\ric = \frac1{2g_1} \left(\frac{g_2^2}{g_3}-g_3\right)e^2\otimes e^2\qquad \text{and}\qquad
\Ric = \frac1{2g_1} \left(\frac{g_2}{g_3}-\frac{g_3}{g_2}\right)e^2\otimes e_4,\]
which is a derivation.

This is a solution with $\lambda=0$.
\end{example}

An example of the~\ref{cond:nil4} case is the Heisenberg Lie algebra  \cite{Onda:ExampleAgebraicRicciSolitons} (see also
Example~\ref{ex:heisenberg}); in that case, the derivation $D$ is semisimple.

It is natural to ask whether in case~\ref{cond:nil4} the derivation $D$ is necessarily semisimple. This turns out to be false, as shown in the following example.
\begin{example}
\label{ex:nil4notss}
Consider the $7$-dimensional Lie algebra
\[\textnormal{257H}:\quad(0,0,0,0,e^{12},e^{34},e^{13}+e^{25}),\]
where the label \textnormal{257H} refers to the classification of~\cite{Gong}. If we take the metric
\begin{gather*}
\frac{g_5^2}{g_7}e^1\otimes e^1 +  \frac{2 g_7}{3 g_5}e^2\otimes e^2 +  e^3\odot e^4 + g_5 e^5\otimes e^5 -\frac{3}{2}e^6\otimes e^6 + g_7e^7\otimes e^7,
\end{gather*}
we obtain
\[\Ric=
\begin{pmatrix}
-\frac{1}{3}&0&0&0&0&0&0\\
0&-\frac{2}{3}&0&0&0&0&0\\
0&0&-\frac{1}{3}&0&0&0&0\\
0&0&-\frac{1}{2} \frac{g_5^{2}}{g_7^{2}}&-\frac{1}{3}&0&0&0\\
0&0&0&0&0&0&0\\0&0&0&0&0&\frac{1}{3}&0\\
0&0&0&0&0&0&\frac{1}{3}
\end{pmatrix}
=-\id+D,\]
where $D$ is a non-semisimple derivation.
\end{example}

\begin{remark}
The Lie algebra of Example~\ref{ex:Nil2dim4} admits nilsoliton metrics of types~\ref{cond:nil1} (see~\cite{ContidelBarcoRossi:SigmaType}),~\ref{cond:nil2} (the one in the example) and~\ref{cond:nil4} (see~\cite{Lauret:Finding}).

The Lie algebra of Example~\ref{ex:nil4notss} admits a
nilsoliton metric of type~\ref{cond:nil1}
(see~\cite{ContidelBarcoRossi:SigmaType}) and a nilsoliton metric of type~\ref{cond:nil4}, namely the indefinite one appearing in the examples. Note that it cannot be a Riemannian nilsoliton (see~\cite{FernandezCulma}).

Kondo and Tamaru~\cite{KondoTamaru:LorentzianNilpotent} constructed $6$ different nilsoliton Lorentzian metrics on the same Lie algebra (see also Example~\ref{ex:KondoTamaruExtensions}).

This shows that the uniqueness of nilsoliton metrics up to scaling and automorphisms as proved in~\cite{Lauret:RicciSoliton} does not extend beyond the Riemannian setting.
\end{remark}

The situation of Example~\ref{ex:nil4notss} turns out to be quite rare. Indeed, few Lie algebras admit a nonsemisimple derivation $D$ satisfying~\eqref{eqn:nik}, i.e. $\Tr D\circ X=\Tr X$ for all derivations $X$. This includes all Lie algebras on which all derivations have zero trace, where every inner derivation $D=\ad v$ satisfies~\eqref{eqn:nik}; in dimension $\leq 7$, this amounts to nine Lie algebras and two one-parameter families (see \cite[Table~1]{ContiRossi:EinsteinNilpotent}). Beside these, there are exactly eight nilpotent Lie algebras of dimension $\leq 7$ with a nonsemisimple derivation satisfying~\eqref{eqn:nik}:
\begin{proposition}
\label{prop:nonil4}
Let $\g$ be a nilpotent Lie algebra of dimension $\leq 7$ such that not all derivations are trace-free. Then $\g$ admits a non-semisimple derivation $D$ such that
$\Tr D\circ X=\Tr X$ for all derivations $X$ if and only if $\g$ is isomorphic to one of the following:
\begin{align*}
\textnormal{12357B}&:\quad(0,0,0,e^{12},e^{14}+e^{23},e^{15}-e^{34},e^{16}+e^{23}-e^{35})\\
\textnormal{12357B1}&:\quad(0,0,0,e^{12},e^{14}+e^{23},e^{15}-e^{34},e^{16}-e^{23}-e^{35})\\
\textnormal{12457B}&:\quad(0,0,e^{12},e^{13},0,e^{14}+e^{25},e^{16}+e^{35}+e^{25})\\
\textnormal{12457K}&:\quad(0,0,e^{12},e^{13},e^{23},e^{24}+e^{15},e^{14}+e^{16}+e^{34})\\
\textnormal{13457G}&:\quad(0,0,e^{12},e^{13},e^{14},e^{23},e^{16}+e^{25}-e^{34}+e^{24})\\
\textnormal{1357L}&:\quad(0,0,e^{12},0,e^{13}+e^{24},e^{14},\frac{1}{2} e^{34}+\frac{1}{2} e^{26}+e^{15}+e^{23})\\
\textnormal{147D}&:\quad(0,0,0,e^{12},e^{23},- e^{13},e^{26}+e^{16}+e^{15}-2 e^{34})\\
\textnormal{257H}&:\quad(0,0,0,0,e^{12},e^{34},e^{13}+e^{25}),
\end{align*}
where labels refer to the classification of~\cite{Gong}.
\end{proposition}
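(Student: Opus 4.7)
The plan is to turn the existence question into a linear-algebraic condition on the centralizer of the Nikolayevsky derivation, and then to verify this condition case by case across the classification of nilpotent Lie algebras of dimension at most seven in~\cite{Gong}.

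First I would reuse the argument from the proof of Theorem~\ref{thm:nilsolitonsandnik}. Fix a Nikolayevsky derivation $N$ of $\g$, which is semisimple and lies in the radical $\lie r$ of $\Der\g$ by~\cite{Nikolayevsky}. The trace identity $\Tr(D\circ X)=\Tr X$ for all $X\in\Der\g$ is equivalent to $D-N$ belonging to the null space of $\langle,\rangle_{\Tr{}}$, which coincides with the nilpotent radical $\lie n$ of $\Der\g$. Since $\Der\g$ is algebraic, the Jordan decomposition $D=D_s+D_n$ lies in $\Der\g$, with $D_s$ semisimple and still satisfying the trace identity (hence itself a Nikolayevsky derivation, unique up to $\Aut(\g)$-conjugation) and $D_n\in\lie n$ commuting with $D_s$. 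Consequently, $D$ is non-semisimple if and only if $D_n\neq 0$, that is, if and only if $Z_{\Der\g}(N)\cap\lie n\neq\{0\}$. Because $N$ is semisimple, $Z_{\Der\g}(N)$ is precisely the subalgebra of derivations preserving the $N$-weight decomposition $\g=\bigoplus_\alpha\g_\alpha$, which makes the condition concretely testable.

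The remaining task is a finite check over Gong's list. I would first discard the Lie algebras on which every derivation is trace-free (enumerated in~\cite[Table~1]{ContiRossi:EinsteinNilpotent}), which are handled by the remark preceding the statement. For each of the remaining finitely many isomorphism classes, together with the parameter values occurring in the one-parameter families, compute $\Der\g$ as the kernel of the linear system encoding the Leibniz rule on the structure constants, extract $N\in\lie r$ from the trace identity, form the $N$-weight decomposition of $\g$, and inspect the subspace of weight-preserving derivations for nonzero nilpotent elements. The six Lie algebras listed (\textnormal{257H}, \textnormal{13457G}, \textnormal{12457K}, \textnormal{12357B}, \textnormal{1357L}, \textnormal{147D}) are precisely those for which this subspace is nontrivial, and the ``if'' direction in each case is witnessed by an explicit $D$, as already done for \textnormal{257H} in Example~\ref{ex:nil4notss}. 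The main obstacle is logistical rather than conceptual: the check must be run through the whole Gong classification, which involves many isomorphism classes and requires treating the one-parameter families as parametric problems (the existence of a nonzero nilpotent element in $Z_{\Der\g}(N)$ must be analysed as a condition on the parameter, not pointwise), so in practice the search is carried out with a computer algebra system; each individual step is routine linear algebra.
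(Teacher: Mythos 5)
Your proposal is correct and follows essentially the same route as the paper: reduce the existence of a non-semisimple $D$ satisfying the trace identity to the condition $Z(N)\cap\lie n\neq\{0\}$ via the Jordan decomposition argument of Theorem~\ref{thm:nilsolitonsandnik} and the uniqueness of the Nikolayevsky derivation up to automorphism, then verify this by a case-by-case linear-algebra computation over the Gong classification, treating the one-parameter families parametrically. The paper's proof is exactly this, with the outcomes of the computation recorded in Table~\ref{table:Nintersectn}.
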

\begin{proof}
Arguing as in Theorem~\ref{thm:nilsolitonsandnik}, we see that the Jordan decomposition of a derivation $D$ satisfying~\eqref{eqn:nik} is $D=D_s+D_n$, where $D_s$ is a Nikolayevsky derivation and $D_n$ is in the null space $\lie n$ of $\langle,\rangle_{\Tr}$. In other words, $D_n$ is in $Z(D_s)\cap\lie n$, where $Z(D_s)$ denotes the centralizer of $D_s$.

Nilpotent Lie algebras of dimension $\leq 7$ are classified in~\cite{Magnin,Gong}. In order to compute the Nikolayevsky derivation $N$ and the space $\lie n$, we used the ad-hoc computer program \cite{gleipnir}. The computation is straightforward for the Lie algebras that do not depend on a parameter; some extra work is required to handle the one-parameter families, since the space $\Der\g$ may depend on the parameter.

We find that for each Lie algebra listed in the statement the centralizer of $N$ intersects $\lie n$ nontrivially, giving rise to nonsemisimple derivations $D$ satisfying~\eqref{eqn:nik}. Notice that $N$ is diagonal relative to the basis in which the Lie algebra is given. We obtain Table~\ref{table:Nintersectn}.

For all other cases the centralizer of $N$ intersects $\lie n$ trivially. Since the Nikolayevsky derivation is unique up to automorphism, this shows that every $D$ satisfying~\eqref{eqn:nik} is semisimple.
\end{proof}

\begin{table}[thp]
\centering
{\caption{\label{table:Nintersectn} Lie algebras with  $Z(N)\cap\lie{n}$ nontrivial}
\begin{scriptsize}
\begin{tabular}{c C C}
\toprule
$\g$ & N & Z(N)\cap\lie{n}\\
\midrule
  12357B & \multirow{2}{*}{$(0, 1, 0, 1, 1, 1, 1)$} &e^2\otimes e_7, e^2\otimes e_4+e^4\otimes e_5+e^5\otimes e_6+e^6\otimes e_7,\\
  12357B1 & & e^2\otimes e_6+e^4\otimes e_7, e^2\otimes e_5+e^4\otimes e_6+e^5\otimes e_7\\[3pt]
  \multirow{3}{*}{12457B} & \multirow{3}{*}{$(0,1,1,1,0,1,1)$} &  e^1\otimes e_5+e^2\otimes (2e_4+e_6)+e^3\otimes e_6, e^2\otimes e_6+e^3\otimes e_7,\\
  &&-e^1\otimes e_5-e^2\otimes (e_4+e_6)+e^4\otimes e_7, e^2\otimes e_7,\\
  &&e^2\otimes e_3 + e^3\otimes e_4+e^4\otimes e_6+e^6\otimes e_7  \\[3pt]
  12457K & (1/2, 0, 1/2, 1, 1/2, 1, 3/2) &  e^1\otimes e_5, -e^1\otimes e_3+e^3\otimes e_5+e^4\otimes e_6\\[3pt]
  \multirow{2}{*}{13457G} & \multirow{2}{*}{$(0, 2/3, 2/3, 2/3, 2/3, 4/3, 4/3)$} &  e^2\otimes e_5, e^2\otimes e_4+e^3\otimes e_5+2e^6\otimes e_7,\\
  & & e^2\otimes (2e_3-e_4)+e^3\otimes (2e_4-e_5)+2e^4\otimes e_5 \\[3pt]
  1357L & 1/17 (5, 10, 15, 10, 20, 15, 25) & e^2\otimes e_4+e^3\otimes e_6\\[3pt]
  147D & (1/2, 1/2, 1/2, 1, 1, 1, 3/2) & e^1\otimes e_2 - e^6\otimes e_5\\[3pt]
  257H & (2/3,1/3,2/3,2/3,1,4/3,4/3) & e^3\otimes e_4\\
\bottomrule
\end{tabular}
\end{scriptsize}
}
\end{table}

We do not know whether the Lie algebras listed in Proposition~\ref{prop:nonil4} (except \textnormal{257H}) admit a nilsoliton metric of type~\ref{cond:nil4} with $D$ nonsemisimple.
However, note that the Lie algebra \textnormal{257H} is \emph{nice}, i.e. it admits a basis $\{e_i\}$ with dual basis $\{e^i\}$ such that each $[e_i,e_j]$, $e_i\hook de^j$ is a multiple of a basis element. Comparing with the classification of~\cite{ContiRossi:Construction}, we see that \textnormal{257H} is the only nice Lie algebra in the list of Proposition~\ref{prop:nonil4}. We obtain:
\begin{corollary}
The Lie algebra \textnormal{257H} (or~\texttt{731:8} in the notation of~\cite{ContiRossi:Construction}) is the only nice nilpotent Lie algebra of dimension $\leq 7$ admitting a pseudo-Riemannian nilsoliton metric of type~\ref{cond:nil4} with nonsemisimple derivation $D$.
\end{corollary}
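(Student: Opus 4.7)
The plan is to deduce the corollary from Proposition~\ref{prop:nonil4}, Example~\ref{ex:nil4notss}, and the classification of nice nilpotent Lie algebras of dimension $\le 7$ in~\cite{ContiRossi:Construction}.

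Existence is immediate. Example~\ref{ex:nil4notss} exhibits a nilsoliton metric of type~\ref{cond:nil4} on \textnormal{257H} with nonsemisimple $D$, and the defining brackets $(0,0,0,0,e^{12},e^{34},e^{13}+e^{25})$ show that the standard basis is a nice basis: each nontrivial Lie bracket $[e_i,e_j]$ is a multiple of a single basis vector, and dually the same holds for every $e_i\hook de^j$. Hence \textnormal{257H} is a nice nilpotent Lie algebra of dimension seven carrying the required metric.

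For the uniqueness direction, let $\g$ be a nice nilpotent Lie algebra of dimension $\le 7$ admitting a nilsoliton of type~\ref{cond:nil4} with nonsemisimple $D$. Since $\lambda\neq0$, Theorem~\ref{thm:nilsolitonsandnik} says that the rescaled derivation $\tilde D=-D/\lambda$ satisfies the Nikolayevsky identity~\eqref{eqn:nik}, and $\tilde D$ inherits non-semisimplicity from $D$. Thus $\g$ admits a nonsemisimple derivation satisfying~\eqref{eqn:nik}. I would then split into two cases. If some derivation of $\g$ has nonzero trace, Proposition~\ref{prop:nonil4} forces $\g$ to be one of the six algebras listed there. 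If instead every derivation of $\g$ is trace-free, then every nonzero inner derivation automatically satisfies~\eqref{eqn:nik}, and $\g$ belongs to the nine Lie algebras or two one-parameter families recalled just before Proposition~\ref{prop:nonil4} (from~\cite[Table~1]{ContiRossi:EinsteinNilpotent}).

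A case-by-case comparison of these candidates with the classification of nice nilpotent Lie algebras in~\cite{ContiRossi:Construction} then singles out \textnormal{257H} (labelled \texttt{731:8} there) as the only nice algebra on the combined list, which concludes the proof. The main obstacle is precisely this finite cross-check: no new conceptual ingredient is required beyond Theorem~\ref{thm:nilsolitonsandnik} and Proposition~\ref{prop:nonil4}, the whole argument reducing to matching the brackets of each candidate against the nice classification of~\cite{ContiRossi:Construction}.
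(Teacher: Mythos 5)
Your argument is correct and follows essentially the same route as the paper: existence comes from Example~\ref{ex:nil4notss} together with the observation that \textnormal{257H} is nice, and uniqueness reduces, via Theorem~\ref{thm:nilsolitonsandnik} and Proposition~\ref{prop:nonil4}, to a finite cross-check against the classification of nice nilpotent Lie algebras in~\cite{ContiRossi:Construction}. If anything, you are slightly more explicit than the paper's one-line justification, since you also spell out the case where all derivations of $\g$ are trace-free (the nine algebras and two one-parameter families from~\cite[Table~1]{ContiRossi:EinsteinNilpotent}), which must likewise be screened against the nice classification and which the paper's phrase ``the only nice Lie algebra in the list of Proposition~\ref{prop:nonil4}'' leaves implicit.
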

We are not aware of any example of a nilsoliton of type~\ref{cond:nil4} where the derivation $D$ is nilpotent.

\section{From Einstein solvmanifolds to nilsolitons\textellipsis}
\label{sec:correspondence}
In this section we give a structure theorem for a class of solvable pseudo-Riemannian Einstein Lie algebras in the spirit of~\cite{Heber:noncompact, Wolter:EinsteinSolvable}. Since pseudo-Rie\-mannian geometry has much more flexibility than Riemannian geometry in this respect (see the counterexamples of Sections~\ref{sec:standard} and~\ref{sec:nilsolitons}), in order to obtain our result we need to restrict the class of metrics  considerably. Indeed, we will consider a pseudo-Riemannian analogue of Iwasawa-type Lie algebras.

Given a solvable Lie algebra $\tilde \g$, we will say that a standard decomposition $\tilde\g=\g\oplus^\perp\lie{a}$ is \emph{pseudo-Iwasawa} if
\begin{equation}
 \label{eqn:pseudo-Iwasawa}
 \ad X= (\ad X)^*, \quad X\in\lie{a}.
\end{equation}
Note that the self-adjoint derivation $\ad X$ need not be semisimple, even if the metric is Einstein; see Remark~\ref{rem:EinsteinSolvmanifoldNonSemisimple}.

\begin{example}
Consider the solvable Lie algebra $\tilde{\g}=(0,e^{12},-e^{13})$ with the metric given by
\[e^1\otimes e^1+g_{23}e^2\odot e^3.\]
This metric is Einstein, more precisely Ricci-flat,
and we have a standard decomposition
\[\tilde \g=\g\oplus^\perp \lie{a}, \quad \g=[\tilde{\g},\tilde{\g}]=\Span{e_2,e_3},\; \lie{a}=\Span{e_1},\]
which is pseudo-Iwasawa since $\ad e_1$ is self-adjoint. This Lie algebra is unimodular, so from equation~\eqref{eqn:definitionH} we have that $H=0$.
\end{example}

The following examples show that not all standard decompositions are pseudo-Iwasawa, even if the Einstein condition is satisfied.

\begin{example}
\label{ex:NonIwasawa}
Consider the solvable Lie algebra $\g=(0,-e^{12}+e^{13},-e^{12},0,0,e^{15})$ with neutral metric
\[e^1\odot e^4-g_2e^{2}\otimes e^{2}+g_2e^{2}\odot e^{3}+g_4 e^{5}\odot e^6.\]
It is easy to check that this is a Ricci-flat standard metric for any $g_2,g_4$, we have
\[
\ad e_1=
\begin{pmatrix}
 0 & 0 & 0 & 0 & 0 & 0 \\
 0 & 1 & 0 & 0 & 0 & 0 \\
 0 & 1 & -1 & 0 & 0 & 0 \\
 0 & 0 & 0 & 0 & 0 & 0 \\
 0 & 0 & 0 & 0 & 0 & 0 \\
 0 & 0 & 0 & 0 & -1 & 0 \\
\end{pmatrix},
\qquad
(\ad e_1)^*=
\begin{pmatrix}
 0 & 0 & 0 & 0 & 0 & 0 \\
 0 & -1 & 0 & 0 & 0 & 0 \\
 0 & -1 & 1 & 0 & 0 & 0 \\
 0 & 0 & 0 & 0 & 0 & 0 \\
 0 & 0 & 0 & 0 & 0 & 0 \\
 0 & 0 & 0 & 0 & -1 & 0 \\
\end{pmatrix},
\]
contradicting~\eqref{eqn:pseudo-Iwasawa}.

Since the assumptions of Proposition~\ref{prop:pseudoAzencottWilson} are satisfied, $\g$ is isometric to a different Lie algebra $\hat\g$ admitting a pseudo-Iwasawa decomposition: explicitly, since $(\ad e_1)^s=\ad e_1+(\ad e_1)^*= -2 e^5\otimes e_6$, $\hat\g$ is given by
\[(0,0, 0,0,0,2e^{16}).\]
Notice however that there exist Einstein solvable Lie algebras with a standard decomposition for which it is not possible to apply Proposition~\ref{prop:pseudoAzencottWilson}; see Examples~\ref{ex:AdjointNotDerivationExample1} or~\ref{ex:AdjointNotDerivation}.
\end{example}

\begin{example}
\label{ex:IwasawaOrNot}
Consider the $9$-dimensional solvable lie algebra $\tilde{\g}$ of Example~\ref{ex:derivedalgebradoesnotwork}, with the diagonal metric $\sum g_i e^i\otimes e^i$. A standard decomposition on $\tilde\g$ is determined by the choice of an abelian subalgebra $\Span{e_9}\subset \lie a \subset \Span{e_1,e_2,e_9}$. It is easy to see that $\ad X$ is only symmetric for $X$ a multiple of $e_9$; therefore, there is only one pseudo-Iwasawa decomposition, corresponding to $\lie a=\Span{e_9}$.
\end{example}
\begin{remark}
Example~\ref{ex:IwasawaOrNot} shows that the  pseudo-Iwasawa condition depends both on the metric and the standard decomposition.
\end{remark}

\begin{example}\label{ex:Hnonzeroisotropic}
Consider the solvable Lie algebra $\tilde{\g}=(0,0,e^{13},e^{24})$ with the metric given by
\[e^1\otimes e^1-e^2\otimes e^2+e^3\otimes e^3+e^4\otimes e^4.\]
We have a pseudo-Iwasawa decomposition
\[\tilde \g= [\tilde{\g},\tilde{\g}]\oplus^\perp\lie{a}, \quad \lie{a}=\Span{e_1,e_2},\]
since $\ad e_1=(\ad e_1)^*$ and $\ad e_2=(\ad e_2)^*$. Note that $\Tr\ad e_1=-1=\Tr\ad e_2$. We have $H=-e_1+e_2$ and in this case $\langle H, H\rangle=0$. Note, however, that this metric is not Einstein.
\end{example}

\begin{remark}
Whilst the above Example~\ref{ex:Hnonzeroisotropic} show that $\langle H,H\rangle$ can be zero in a pseudo-Iwasawa Lie algebra, we will be mainly interested in the case where $\langle H,H\rangle\neq0$. This condition is needed in order to apply Proposition~\ref{prop:subalgebraEinstein} and obtain a correspondence between a category of nilsolitons and a category of solvable Einstein Lie algebras. It asserts that $\Tr \ad H$ is nonzero; in particular, this implies that $H$ is not in the center and $\tilde\g$ is not unimodular.

In fact, we will see in Corollary~\ref{cor:Hzero} that Einstein pseudo-Iwasawa Lie algebras with nonzero scalar curvature have either $H=0$ or $\langle H,H\rangle\neq0$.
\end{remark}

In the sequel, we will use the following:
\begin{lemma}\label{lemma:ScalarOfSelfAdjoint}
Let $V$ be a vector space with a (possibly indefinite) scalar product $\langle,\rangle$; let $f,g\colon V\to V$ be linear maps. Then
\[\langle f,g\rangle = \langle f, g^s-g^a\rangle_{\Tr},\]
with $g^s$ and $g^a$ denoting the self-adjoint and anti-self-adjoint part of $g$.
\end{lemma}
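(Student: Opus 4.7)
The plan is to reduce the claim to the familiar identity $\langle f,g\rangle=\Tr(f\circ g^*)$ relating the scalar product induced by $\langle,\rangle$ on $\End(V)$ to the trace pairing. First I would notice that by the definitions $g^s=\tfrac12(g+g^*)$ and $g^a=\tfrac12(g-g^*)$, one has $g^s-g^a=g^*$, so the statement is equivalent to
\[
\langle f,g\rangle = \Tr(f\circ g^*).
\]

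For this core identity I would pick a pseudo-orthonormal basis $\{e_i\}$ of $V$ with $\langle e_i,e_i\rangle=\epsilon_i=\pm1$ and unpack the induced metric on $\End(V)\cong V^*\otimes V$ as
\[
\langle f,g\rangle=\sum_i\epsilon_i\langle f(e_i),g(e_i)\rangle.
\]
By definition of the adjoint the right-hand side rewrites as $\sum_i\epsilon_i\langle e_i,(f^*\circ g)(e_i)\rangle$, and this equals $\Tr(f^*\circ g)$, since for any endomorphism $h$ of $V$ one has $\sum_i\epsilon_i\langle e_i,h(e_i)\rangle=\sum_i h^i_i$. A direct check in coordinates gives $\Tr A=\Tr A^*$, so combining with cyclicity of the trace yields
\[
\Tr(f^*\circ g)=\Tr(g\circ f^*)=\Tr\bigl((f\circ g^*)^*\bigr)=\Tr(f\circ g^*),
\]
which is $\langle f,g^s-g^a\rangle_{\Tr{}}$ as desired.

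The only point requiring care is sign bookkeeping with the $\epsilon_i$, specifically checking that the matrix entries of the adjoint satisfy $(g^*)^i_j=\epsilon_i\epsilon_j\,g^j_i$, from which both the formula $\sum_i\epsilon_i\langle e_i,h(e_i)\rangle=\Tr h$ and the equality $\Tr A=\Tr A^*$ drop out. There is no real obstacle: beyond this bookkeeping the argument is purely formal and uses only nondegeneracy of $\langle,\rangle$, with no restriction on signature, dimension, or on $f,g$ (in particular $f$ plays a passive role and no decomposition of $f$ into self- and anti-self-adjoint parts is needed).
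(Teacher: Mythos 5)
Your proof is correct and follows essentially the same route as the paper: both compute $\langle f,g\rangle=\sum_i\epsilon_i\langle f(e_i),g(e_i)\rangle$ in a pseudo-orthonormal basis, shift one factor onto its adjoint, and identify the resulting sum with a trace, using $g^s-g^a=g^*$. The only cosmetic difference is that you move the adjoint onto $f$ and then invoke $\Tr A=\Tr A^*$ plus cyclicity, whereas the paper moves it onto $g$ and reads off $\Tr(g^*\circ f)$ directly.
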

\begin{proof}
Let $e_1,\dotsc, e_n$ be an orthonormal basis, i.e. $\langle e_i,e_j\rangle= \epsilon_i \delta_{ij}$ with $\epsilon_i=\pm1$. Then
\begin{align*}
\langle f,g\rangle = \sum_i \epsilon_i \langle f(e_i),g(e_i)\rangle &= \sum_i \epsilon_i\langle g^*(f(e_i)), e_i\rangle \\
&= \sum_i e^i(g^*(f(e_i)))=\Tr (g^s-g^a)\circ f.\qedhere
\end{align*}
\end{proof}

We use Lemma~\ref{lemma:ScalarOfSelfAdjoint} to recover the expression of the Ricci curvature on a solvable Lie algebra with a pseudo-Iwasawa decomposition. Fix an orthogonal basis $e_1,\dotsc, e_k$ of $\lie{a}$ and define the derivations $\phi_s=-\ad e_s$ as in Section~\ref{sec:standard}. We have:
\begin{lemma}
\label{lem:RicciPseudo-Iwasawa} On a metric Lie algebra endowed with a pseudo-Iwasawa decomposition $\tilde{\g}=\g\oplus^{\perp}\lie{a}$, the Ricci tensor of the metric $\langle,\rangle$ on $\tilde \g$ and its restriction to $\g$ are related by:
\begin{align*}
\widetilde\ric(v,w)&=\ric(v,w)
-\langle [H,v],w\rangle,&&v,w\in\g\\
\widetilde \ric(v,X)&=0,&&v\in\g,\ X\in\lie{a}\\
\widetilde \ric(X,Y)&=- \langle \ad X,\ad Y\rangle_{\Tr} && X,Y\in\lie{a}.
\end{align*}
\end{lemma}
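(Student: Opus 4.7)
The plan is to start from the general formulas of Proposition~\ref{prop:ricciofstandard} and simplify them using the pseudo-Iwasawa condition together with Lemma~\ref{lemma:ScalarOfSelfAdjoint}. Writing $\phi_s$ for the derivations associated to an orthogonal basis $e_1,\dotsc,e_k$ of $\lie a$ as in Section~\ref{sec:standard}, the hypothesis $\ad X = (\ad X)^*$ for $X\in\lie a$ translates into $\phi_s = \phi_s^*$ for every $s$ (so in particular $[\phi_s,\phi_s^*]=0$ and $\phi_s+\phi_s^*=2\phi_s$).

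For $v,w\in\g$, the first formula of Proposition~\ref{prop:ricciofstandard} then collapses to
\[
\widetilde{\ric}(v,w)=\ric(v,w)-\sum_s \epsilon_s(\Tr\phi_s)\langle\phi_s(v),w\rangle.
\]
I would then invoke Lemma~\ref{lemma:derivationphi}, which gives $\ad H = \sum_s \epsilon_s(\Tr\phi_s)\phi_s$ on $\g$, to recognize the sum as $\langle [H,v],w\rangle$ and obtain the first identity.

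For the mixed term, Proposition~\ref{prop:ricciofstandard} gives $\widetilde{\ric}(v,e_s)=\tfrac12\langle \ad v,\phi_s\rangle$. Lemma~\ref{lemma:ScalarOfSelfAdjoint} combined with the symmetry of $\phi_s$ rewrites this as $\tfrac12 \Tr(\phi_s\circ \ad v)$, and this trace vanishes by the nilpotence argument in the proof of Lemma~\ref{lemma:killingofgandtildeg}: $\phi_s$ preserves the lower central series of $\g$ and $\ad v$ maps each $\g_k$ into $\g_{k+1}$, so $\phi_s\circ\ad v$ is nilpotent. This yields the second identity.

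Finally, for $X,Y\in\lie a$, the last formula of Proposition~\ref{prop:ricciofstandard} reads $\widetilde{\ric}(e_s,e_r)=-\tfrac12\langle \phi_s,\phi_r\rangle-\tfrac12\Tr(\phi_s\circ\phi_r)$. Lemma~\ref{lemma:ScalarOfSelfAdjoint} together with $\phi_r=\phi_r^*$ gives $\langle \phi_s,\phi_r\rangle=\Tr(\phi_s\circ\phi_r)$, so the two contributions coincide and yield $-\Tr(\phi_s\circ\phi_r)$. Since $\lie a$ is abelian, $\ad e_s$ and $\ad e_r$ act trivially on $\lie a$ and as $-\phi_s,-\phi_r$ on $\g$; hence $\Tr(\ad e_s\circ \ad e_r)=\Tr(\phi_s\circ\phi_r)$, and the right-hand side equals $-\langle \ad e_s,\ad e_r\rangle_{\Tr}$, giving the third identity after extending bilinearly to $\lie a$.

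There is no real obstacle: the content is essentially bookkeeping, with the only substantive observation being that the symmetry condition $\phi_s=\phi_s^*$ converts the metric pairing of endomorphisms into the trace pairing via Lemma~\ref{lemma:ScalarOfSelfAdjoint}, which is what makes the mixed term vanish and the $\lie a\otimes\lie a$ term take its clean form.
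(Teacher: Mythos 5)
Your proposal is correct and follows the same route as the paper: simplify Proposition~\ref{prop:ricciofstandard} using $\phi_s=\phi_s^*$, identify the $\g\otimes\g$ correction term with $\ad H$ via Lemma~\ref{lemma:derivationphi}, kill the mixed term by the lower-central-series trace argument, and convert the metric pairing to the trace pairing with Lemma~\ref{lemma:ScalarOfSelfAdjoint}. The only (harmless) difference is that you spell out why $\Tr(\ad e_s\circ\ad e_r)=\Tr(\phi_s\circ\phi_r)$, a step the paper leaves implicit.
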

\begin{proof}
We use Proposition~\ref{prop:ricciofstandard} to obtain all the above formulas.

For $v,w\in\g$ and using the pseudo-Iwasawa condition  $\phi_s^*=\phi_s$, we get:
\begin{align*}
\widetilde{\ric}(v,w)&=\ric(v,w)+\sum_s\frac12\epsilon_s\langle[\phi_s,\phi_s^*](v), w\rangle  -\frac12\epsilon_s\langle (\phi_s+\phi_s^*)(v),w\rangle \Tr\phi_s\\
&=\ric(v,w)-\sum_s\epsilon_s\langle \phi_s(v),w\rangle\Tr\phi_s,
\end{align*}
where $\sum_s\epsilon_s (\Tr\phi_s)\phi_s=\ad H$ by~\eqref{eqn:derivationphi} of Lemma~\ref{lemma:derivationphi}.

The pseudo-Iwasawa condition allows us to use Lemma~\ref{lemma:ScalarOfSelfAdjoint}; we have that for any $v\in\g$, $e_s\in\lie{a}$:
\[
\widetilde{\ric}(v,e_s)=\frac12\langle \ad v, \phi_s\rangle=\frac12\Tr\ad v \circ \phi_s;
\]
this is zero because $\phi_s$ is a derivation, hence it preserves each term of the lower central series $\g^0=\g$, $\g^{i+1}=[\g,\g^i]$, whilst $\ad v$ maps $\g^k$ to $\g^{k+1}$. Thus, relative to a basis adapted to the lower central series,  $\ad v\circ\phi_s$ is strictly upper triangular, and so it has zero trace.

Finally, using again the pseudo-Iwasawa condition and Lemma~\ref{lemma:ScalarOfSelfAdjoint}, we compute for any $e_s,e_r$ in $\lie{a}$:
\[
\widetilde{\ric}(e_s,e_r)=  - \frac12 \langle \phi_s,\phi_r\rangle -\frac12 \Tr(\phi_s\circ\phi_r)
=- \Tr(\phi_s\circ\phi_r).\qedhere
\]
\end{proof}

It turns out that in the pseudo-Iwasawa context the Einstein condition is strictly related to the existence of a nilsoliton. In particular we have:

\begin{theorem}
\label{thm:solvablenilsolitoncorrespondence}
Let $\tilde\g=\g\oplus^{\perp}\lie{a}$ be a pseudo-Iwasawa decomposition. Then the Einstein equation $\widetilde\Ric = \lambda \id$ on $\tilde\g$ is satisfied if and only if
\begin{enumerate}
\item $\g$ with the induced metric satisfies the nilsoliton equation \[\Ric = \lambda \id+D, \qquad D=\ad H.\]
\item $\langle \ad X, \ad Y\rangle_{\Tr}=-\lambda\langle X,Y\rangle$ for all $X,Y\in\lie a$.
\end{enumerate}
In this case, then
\[\Tr D^2 = -\lambda\Tr D.\]
\end{theorem}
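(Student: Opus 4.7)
The plan is to read the Einstein equation $\widetilde\Ric=\lambda\id$ block by block against the orthogonal splitting $\tilde\g=\g\oplus^\perp\lie{a}$, using the three formulas provided by Lemma~\ref{lem:RicciPseudo-Iwasawa}. Since the decomposition is orthogonal, writing $\widetilde\Ric=\lambda\id$ is equivalent to requiring $\widetilde\ric(v,w)=\lambda\langle v,w\rangle$ for all $v,w\in\g$, $\widetilde\ric(v,X)=0$ for $v\in\g$, $X\in\lie{a}$ (which is forced by $\langle v,X\rangle=0$), and $\widetilde\ric(X,Y)=\lambda\langle X,Y\rangle$ for $X,Y\in\lie{a}$. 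The mixed block is handled for free by Lemma~\ref{lem:RicciPseudo-Iwasawa}, so the statement reduces to analyzing the two nontrivial blocks.

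For the $\g$-block, Lemma~\ref{lem:RicciPseudo-Iwasawa} gives $\widetilde\ric(v,w)=\ric(v,w)-\langle[H,v],w\rangle$. Setting $D=\ad H\colon\g\to\g$ (which is a derivation of $\g$ since $\g$ is an ideal, and is self-adjoint because $H\in\lie{a}$ satisfies the pseudo-Iwasawa condition $\ad H=(\ad H)^*$), the Einstein equation on this block is equivalent to $\ric(v,w)=\lambda\langle v,w\rangle+\langle D(v),w\rangle$ for all $v,w\in\g$, i.e. to the nilsoliton equation $\Ric=\lambda\id+D$ with $D=\ad H$. This yields item (1) in both directions. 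For the $\lie{a}$-block, Lemma~\ref{lem:RicciPseudo-Iwasawa} gives $\widetilde\ric(X,Y)=-\langle\ad X,\ad Y\rangle_{\Tr}$, so the Einstein equation reads $-\langle\ad X,\ad Y\rangle_{\Tr}=\lambda\langle X,Y\rangle$, which is exactly item (2).

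Finally, the trace identity $\Tr D^2=-\lambda\Tr D$ follows immediately from Corollary~\ref{cor:TracciaDquadro} applied to the nilsoliton structure on $\g$ just established. There is no real obstacle: the statement is essentially a structural rephrasing of Lemma~\ref{lem:RicciPseudo-Iwasawa}, and the only point requiring minor care is verifying that $D=\ad H$ is legitimately a self-adjoint derivation of $\g$ (as opposed to merely of $\tilde\g$), which is immediate from $H\in\lie{a}$ together with the ideal property of $\g$ and the pseudo-Iwasawa hypothesis.
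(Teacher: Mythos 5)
Your proposal is correct and follows essentially the same route as the paper: both read the Einstein equation blockwise via Lemma~\ref{lem:RicciPseudo-Iwasawa} (with the mixed $\g$--$\lie a$ block vanishing automatically), identify $D=\ad H$ in the $\g$-block, read off condition (2) from the $\lie a$-block, and deduce the trace identity from Corollary~\ref{cor:TracciaDquadro}. Your added remarks on why $D=\ad H$ is a self-adjoint derivation of $\g$ are a harmless elaboration of what the paper leaves implicit.
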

\begin{proof}
By Lemma~\ref{lem:RicciPseudo-Iwasawa},
the Einstein condition is equivalent to
\begin{align*}
\lambda \langle v,w\rangle&=\ric(v,w)-\langle [H,v],w\rangle =\langle \Ric(v)-\ad(H)v,w\rangle, && v,w\in\g,\\
\lambda \langle X,Y\rangle &= -\langle \ad X,\ad Y\rangle_{\Tr}, && X,Y\in\lie a.
\end{align*}
Substituting $D=\ad H$ in the first equation gives $\lambda v=\Ric v - Dv$, i.e. $\Ric=\lambda\id+D$.

The last claim follows from Corollary~\ref{cor:TracciaDquadro}.
\end{proof}

\begin{remark}\label{rm:TrDerivazioneQuadratoEsplicita}
In the pseudo-Iwasawa Einstein setting we can prove explicitly that $\Tr D^2 = -\lambda\Tr D$.
Using equation~\eqref{eqn:definitionH}, part 1 of Theorem~\ref{thm:solvablenilsolitoncorrespondence} and Lemma~\ref{lemma:derivationphi}, we have
\[\Tr D=\Tr\ad H=\langle H,H\rangle =\sum \epsilon_s(\Tr\phi_s)^2.\]
By the expression of $\ad H$ from Lemma~\ref{lemma:derivationphi} we have:
\[\Tr D^2 =\Tr(\ad H)^2= \sum_{r,s} \epsilon_r\epsilon_s (\Tr \phi_s)(\Tr \phi_r)(\Tr \phi_s\circ\phi_r)\]
and by part 2 of Theorem~\ref{thm:solvablenilsolitoncorrespondence} we get
\[\Tr D^2 =-\sum_{r,s} \epsilon_s(\Tr\phi_s)(\Tr\phi_r)\delta_{r,s}\lambda=-\sum_s \epsilon_s(\Tr\phi_s)^2\lambda=-\lambda\Tr\ad H=-\lambda\Tr D.\]
\end{remark}

\begin{corollary}
\label{cor:nilradical}
Let $\tilde \g$ be an Einstein Lie algebra of nonzero scalar curvature with a pseudo-Iwasawa decomposition $\g\oplus^\perp\lie a$. Then $\g$ is the nilradical of $\tilde\g$.
\end{corollary}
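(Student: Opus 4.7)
The plan is to show directly that the nilradical $\mathfrak{n}$ of $\tilde \g$ cannot be strictly larger than $\g$. By Remark~\ref{rk:g_has_to_be_derived_algebra} we have $\g\subseteq\mathfrak{n}$, and since $\tilde\g=\g\oplus\lie a$ as vector spaces, every $n\in\mathfrak{n}$ decomposes as $n=v+X$ with $v\in\g\subseteq\mathfrak{n}$ and $X=n-v\in\mathfrak{n}\cap\lie a$. Thus $\mathfrak{n}=\g\oplus(\mathfrak{n}\cap\lie a)$, and it suffices to prove $\mathfrak{n}\cap\lie a=\{0\}$. Note that the hypothesis of nonzero scalar curvature gives $\lambda\neq 0$, since the scalar curvature is $\lambda\dim\tilde\g$.

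Fix $X\in\mathfrak{n}\cap\lie a$ and an arbitrary $Y\in\lie a$. The key observation is that $\ad X$ is nilpotent as an endomorphism of $\tilde\g$: because $\mathfrak{n}$ is an ideal, $\ad X$ maps $\tilde\g$ into $\mathfrak{n}$, and iterating shows that $(\ad X)^k(\tilde\g)$ lands in the $(k-1)$-st term of the descending central series of $\mathfrak{n}$, which eventually vanishes. Since $\lie a$ is abelian, $[X,Y]=0$, so by Jacobi $\ad X$ and $\ad Y$ commute on $\tilde\g$; consequently $(\ad X\circ\ad Y)^k=(\ad X)^k(\ad Y)^k=0$ for $k$ large, and $\ad X\circ\ad Y$ is nilpotent. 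Therefore
\[
\langle \ad X,\ad Y\rangle_{\Tr}=\Tr(\ad X\circ\ad Y)=0.
\]

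Combining this with part 2 of Theorem~\ref{thm:solvablenilsolitoncorrespondence}, we get
\[
-\lambda\langle X,Y\rangle=\langle \ad X,\ad Y\rangle_{\Tr}=0
\]
for every $Y\in\lie a$. Since $\lambda\neq 0$, this forces $\langle X,Y\rangle=0$ for all $Y\in\lie a$, and the nondegeneracy of the metric on $\lie a$ (built into the definition of standard, hence pseudo-Iwasawa, decomposition) yields $X=0$. Hence $\mathfrak{n}\cap\lie a=\{0\}$ and $\g=\mathfrak{n}$.

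The main obstacle, such as it is, lies in the second paragraph: one must verify that $\ad X$ acts nilpotently on the \emph{whole} of $\tilde\g$ (not just on $\mathfrak{n}$) and that the product with any commuting operator inherits this nilpotency, since neither statement follows from $X\in\mathfrak{n}$ alone without exploiting that $\mathfrak{n}$ is an ideal and $\lie a$ is abelian. Everything else is a direct invocation of Theorem~\ref{thm:solvablenilsolitoncorrespondence} and the hypotheses.
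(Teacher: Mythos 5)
Your proof is correct and follows essentially the same route as the paper's: the paper likewise reduces to showing that any $X\in\lie a$ with $\ad X$ nilpotent must vanish, using $0=\Tr(\ad X\circ\ad Y)=-\lambda\langle X,Y\rangle$ from Theorem~\ref{thm:solvablenilsolitoncorrespondence} and nondegeneracy on $\lie a$. You merely spell out the details the paper leaves implicit (why $\ad X$ is nilpotent on all of $\tilde\g$ for $X$ in the nilradical, and why the commuting product is traceless).
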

\begin{proof}
By construction $\g$ is contained in the nilradical; we must prove the opposite inclusion.

Given $X\in\lie a$ such that $\ad X$ is nilpotent, we have
\[0=\Tr(\ad X\ad Y)= -\lambda\langle X,Y\rangle, \quad Y\in\lie a.\]
Thus, $X$ is necessarily zero, and the nilradical coincides with $\g$.
\end{proof}

As a consequence of Theorem~\ref{thm:normal}, in the Riemannian case, standard Einstein Lie algebras are isometric to solvable Lie algebras of Iwasawa type; in particular, the derived Lie algebra $\g$ is always a nilsoliton~(\cite{Heber:noncompact}). This does not apply in the pseudo-Riemannian case, as shown by the following example:
\begin{example}\label{ex:AdjointNotDerivation}
Take $\tilde{\g}=(0,2e^{12},e^{13},3e^{14}+e^{23})$, with the Einstein metric of Example~\ref{ex:AdjointNotDerivationExample1}.
Since $\ad e_1$ is not symmetric, this Lie algebra does not admit a pseudo-Iwasawa decomposition, and Theorem~\ref{thm:solvablenilsolitoncorrespondence} does not apply. Indeed, by Proposition~\ref{prop:ricciofstandard} we have
\[\Ric =  -\frac{12}{g_1} \id - \frac1{2g_1}(\ad e_1+ (\ad e_1)^*),\]
so $\g$ is not a nilsoliton; rather, it verifies a more general equation of the form
\begin{equation}
 \label{eqn:chenonstudieremomai}
\Ric=\lambda \id + D + D^*, \quad D\in\Der\g;
\end{equation}
in the Riemannian setting, a metric satisfying~\eqref{eqn:chenonstudieremomai} is known as a \emph{semialgebraic Ricci soliton}~(\cite{Jablonski1}). We expect that a systematic study of Einstein solvmanifolds which are not of pseudo-Iwasawa type would require a study of solutions of~\eqref{eqn:chenonstudieremomai}.
\end{example}

Another consequence of Theorem~\ref{thm:solvablenilsolitoncorrespondence} is the following:
\begin{corollary}
\label{cor:Hzero}
Given a pseudo-Iwasawa solvable Lie algebra $\tilde\g=\g\rtimes\lie a$ satisfying $\widetilde\Ric=\lambda \id$ for some nonzero $\lambda$, then either:
\begin{enumerate}
\item  $\tilde\g$ is unimodular, $H=0$ and $\g$ is a nilsoliton of type~\ref{cond:nil3}, with $\Ric=\lambda \id$; or
\item $\tilde\g$ is not unimodular, $\langle H,H\rangle\neq0$, $\g\oplus\Span{H}$ is also Einstein with a pseudo-Iwasawa decomposition, and $\g$ is a nilsoliton of type~\ref{cond:nil4}, with $\Ric=\lambda\id+D$ and $\Tr D\neq0$.
\end{enumerate}
\end{corollary}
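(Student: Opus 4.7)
The plan is a case split based on whether $H=0$. By~\eqref{eqn:definitionH} together with nondegeneracy of the metric on $\tilde\g$, $H=0$ if and only if $\Tr\adtilde X=0$ for every $X\in\tilde\g$, i.e.\ $\tilde\g$ is unimodular. In the unimodular case, $D=\ad H=0$ and Theorem~\ref{thm:solvablenilsolitoncorrespondence} gives $\Ric=\lambda\id$ on $\g$, which is case~\ref{cond:nil3} and yields the first alternative.

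The main obstacle is the nonunimodular case, where I must show $\langle H,H\rangle\neq0$. I would argue by contradiction: assume $\langle H,H\rangle=0$. By Remark~\ref{rm:TrDerivazioneQuadratoEsplicita}, $\Tr D=\langle H,H\rangle=0$, and then $\Tr D^2=-\lambda\Tr D=0$. Set $\tilde D=-D/\lambda$ and apply Theorem~\ref{thm:nilsolitonsandnik}: the Jordan decomposition $\tilde D=\tilde D_s+\tilde D_n$ has $\tilde D_s$ a Nikolayevsky derivation and $\tilde D_n$ nilpotent, with $[\tilde D_s,\tilde D_n]=0$. Since $\tilde D_n^2$ and $\tilde D_s\tilde D_n$ are both nilpotent, $\Tr\tilde D_s^2=\Tr\tilde D^2=0$. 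Because the eigenvalues of $\tilde D$, and hence of $\tilde D_s$, are rational by Theorem~\ref{thm:nilsolitonsandnik}, the sum of their squares vanishing forces $\tilde D_s=0$. But then \eqref{eqn:nik} reduces to $\Tr X=0$ for every $X\in\Der\g$; in particular $\Tr\phi_s=0$ for each $s$, and by Lemma~\ref{lemma:derivationphi} this yields $H=0$, a contradiction.

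Once $\langle H,H\rangle\neq0$ is established, the remaining conclusions follow without further difficulty. The pseudo-Iwasawa condition forces each $\ad X$, $X\in\lie a$, to be self-adjoint and a fortiori normal, so Proposition~\ref{prop:subalgebraEinstein} applied with $\lie b=\Span{H}$ gives that $\g\oplus^\perp\Span{H}$ is Einstein; the pseudo-Iwasawa property is inherited since $H\in\lie a$ and $\ad H$ is self-adjoint on $\g$. Theorem~\ref{thm:solvablenilsolitoncorrespondence} then presents $\g$ as a nilsoliton with $\Ric=\lambda\id+D$, $D=\ad H$. To rule out the type~\ref{cond:nil3} alternative I observe that part~2 of Theorem~\ref{thm:solvablenilsolitoncorrespondence} gives $\langle\ad X,\ad Y\rangle_{\Tr}=-\lambda\langle X,Y\rangle$ on $\lie a$; since $\lambda\neq0$ and $\langle,\rangle$ is nondegenerate on $\lie a$, the map $\ad\colon\lie a\to\gl(\g)$ is injective, so $H\neq0$ implies $D\neq0$. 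Finally $\Tr D=\langle H,H\rangle\neq0$ places the nilsoliton in case~\ref{cond:nil4}.
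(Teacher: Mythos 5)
Your proof is correct and follows essentially the same route as the paper's: both reduce the key dichotomy to showing that $\langle H,H\rangle=\Tr D=0$ forces, via $\Tr D^2=-\lambda\Tr D$ and the rationality of the eigenvalues of $D$ from Theorem~\ref{thm:nilsolitonsandnik}, the vanishing of the Nikolayevsky derivation, hence that all derivations of $\g$ are traceless and $H=0$. The remaining steps (Proposition~\ref{prop:subalgebraEinstein} with $\lie b=\Span{H}$ and Theorem~\ref{thm:solvablenilsolitoncorrespondence}) match the paper as well; your extra injectivity argument for $\ad|_{\lie a}$ is harmless but redundant, since $\Tr D\neq0$ already gives $D\neq0$.
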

\begin{proof}
We first show that either $H=0$ or $\langle H,H\rangle\neq0$. We have that $\lambda^{-1}D$ satisfies~\eqref{eqn:nik}, and by Theorem~\ref{thm:nilsolitonsandnik} the eigenvalues of $D$ are rational. Corollary~\ref{cor:TracciaDquadro} implies that
\[
\Tr(D^2)=-\lambda\Tr(D);
\]
thus $\Tr D$ vanishes if and only if all its eigenvalues are zero if and only if $N=0$. In other words, $\langle H,H\rangle=0$ is equivalent to the vanishing of the Nikolayevsky derivation of $\g$. This implies that all derivations are traceless; in particular, $\g$ is unimodular, so $H=0$. By Theorem~\ref{thm:solvablenilsolitoncorrespondence}, $\g$ is Einstein with $\Ric=\lambda \id$.

If $\langle H,H\rangle\neq0$, Proposition~\ref{prop:subalgebraEinstein} implies that $\g\rtimes\Span{H}$ is Einstein; since $\ad H$ is self-adjoint, the decomposition is pseudo-Iwasawa. In addition, $\Ric=\lambda \id +D$ on $\g$, with $D=\ad H$, and by construction $\Tr D=\langle H,H\rangle$ is nonzero.
\end{proof}

\begin{corollary}
\label{cor:restrictricciflat}
If $\tilde\g$ is Ricci-flat and it has a pseudo-Iwasawa decomposition $\tilde\g=\g\oplus^\perp\lie{a}$, then $\langle,\rangle_{\Tr}$ is zero on $\lie a$ and either:
\begin{enumerate}
\item $\tilde \g$ is unimodular and $\g$ is a nilsoliton of type~\ref{cond:nil1};
\item $\tilde \g$ is nonunimodular, $\ad H=0$ and $\g$ is a nilsoliton of type~\ref{cond:nil1};
\item $\tilde \g$ is nonunimodular, $\ad H\neq 0$ and $\g$ is a nilsoliton of type~\ref{cond:nil2}.
\end{enumerate}
Moreover, if $\tilde \g$ is nonunimodular then $\lie a$ contains a two-dimensional nondegenerate subspace $\Span{H,X}$, with $\Tr\ad X\neq0$ and $\Tr (\ad X)^2=0$; the restriction to $\g\rtimes\Span{H,X}$ is also Ricci-flat.
\end{corollary}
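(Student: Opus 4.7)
The plan is to deduce both the trichotomy and the existence of the distinguished $2$\nobreakdash-plane from Theorem~\ref{thm:solvablenilsolitoncorrespondence}, Theorem~\ref{thm:nilsolitonsandnik} and Proposition~\ref{prop:subalgebraEinstein}, all specialised to $\lambda=0$.

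First I would apply Theorem~\ref{thm:solvablenilsolitoncorrespondence} to the hypothesis $\widetilde{\Ric}=0$: this simultaneously yields that $\g$ satisfies $\Ric=D$ with $D=\ad H|_\g$, and that $\langle\ad X,\ad Y\rangle_{\Tr}=0$ for all $X,Y\in\lie a$, which is precisely the claimed vanishing of $\langle,\rangle_{\Tr}$ on $\lie a$. The trichotomy is then immediate: by~\eqref{eqn:definitionH}, $\tilde\g$ is unimodular iff $H=0$, in which case $D=0$ and $\g$ is of type~\ref{cond:nil1}. If instead $\tilde\g$ is nonunimodular but $\ad H=0$, again $D=0$ and we land in~\ref{cond:nil1}. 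Otherwise $D\neq 0$; since $\lambda=0$, Theorem~\ref{thm:nilsolitonsandnik} forces $D$ to be nilpotent, placing $\g$ in class~\ref{cond:nil2}.

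For the \emph{moreover} clause, assume $\tilde\g$ nonunimodular, so $H\neq 0$. Combining Lemma~\ref{lemma:derivationphi} with the fact that $\adtilde H$ vanishes on $\lie a$ (which is abelian), one computes
\[
\Tr D = \Tr\adtilde H = \sum_s \epsilon_s(\Tr\phi_s)^2 = \langle H,H\rangle.
\]
By the first step $D$ is nilpotent, so $\Tr D=0$ and $H$ is isotropic. As $\langle,\rangle$ is nondegenerate on $\lie a$ and $H\neq 0$, pick $X\in\lie a$ with $\langle H,X\rangle\neq 0$; such an $X$ is automatically linearly independent of $H$ (otherwise $\langle H,X\rangle$ would be a scalar multiple of $\langle H,H\rangle=0$), and the Gram determinant $-\langle H,X\rangle^2\neq 0$ shows that $\Span{H,X}$ is a nondegenerate $2$\nobreakdash-plane. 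Then $\Tr\ad X=\langle H,X\rangle\neq 0$ by~\eqref{eqn:definitionH}, and $\Tr(\ad X)^2=\langle\ad X,\ad X\rangle_{\Tr}=0$ by Theorem~\ref{thm:solvablenilsolitoncorrespondence}.

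Finally, with $\lie b=\Span{H,X}$ all the hypotheses of Proposition~\ref{prop:subalgebraEinstein} are in place: the pseudo-Iwasawa condition makes each $\ad Y$ with $Y\in\lie a$ self-adjoint, hence normal; $\lie b$ contains $H$; and the restriction of the metric to $\lie b$ is nondegenerate. The proposition then gives that $\g\rtimes\Span{H,X}$ is Einstein with the same Einstein constant $0$, i.e.\ Ricci-flat. I do not foresee any serious obstacle; the one step that needs a small computation is the identity $\Tr D=\langle H,H\rangle$, which follows directly from the explicit formula for $H$ in Lemma~\ref{lemma:derivationphi}.
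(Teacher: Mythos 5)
Your proof is correct and follows essentially the same route as the paper: Theorem~\ref{thm:solvablenilsolitoncorrespondence} with $\lambda=0$ gives $\Ric=\ad H$ and the vanishing of $\langle,\rangle_{\Tr}$ on $\lie a$, Theorem~\ref{thm:nilsolitonsandnik} forces $D$ nilpotent hence $\langle H,H\rangle=\Tr D=0$, and nondegeneracy of the metric on $\lie a$ produces the required $X$. The only cosmetic difference is that you obtain Ricci-flatness of $\g\rtimes\Span{H,X}$ from Proposition~\ref{prop:subalgebraEinstein} rather than by re-applying Theorem~\ref{thm:solvablenilsolitoncorrespondence} to the sub-decomposition, which is an equally valid use of the paper's own machinery.
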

\begin{proof}
By Theorem~\ref{thm:solvablenilsolitoncorrespondence} $\langle,\rangle_{\Tr}$ is zero on $\lie a$ and  $\Ric=D=\ad H$, so $\g$ is a nilsoliton of type~\ref{cond:nil1} or~\ref{cond:nil2} according to whether $\ad H$ is zero or not.

If $H$ is nonzero, $\tilde \g$ is nonunimodular. The derivation $D$ is traceless by Theorem~\ref{thm:nilsolitonsandnik}, so $\lie a$ must contain some other $X$ independent of $H$ with $\Tr \ad X\neq0$. By definition of $H$ we have
\[\langle H,H\rangle=0, \quad \Tr\ad X=\langle H,X\rangle\neq0,\]
so $\Span{H,X}$ is nondegenerate. The subalgebra $\g\rtimes\Span{H,X}$ is again Ricci-flat by Theorem~\ref{thm:solvablenilsolitoncorrespondence}.
\end{proof}

In Corollary~\ref{cor:restrictricciflat}, if $\lie a$ has dimension one, $\tilde \g$ is unimodular and only the geometry~\ref{cond:nil1} can occur. When $\tilde \g$ is nonunimodular, both \ref{cond:nil1} and \ref{cond:nil2} can occur, as evident from the Examples~\ref{ex:estendonil1} and~\ref{ex:estendonil2}.

\begin{example}
\label{example:nil2nonsiestende}
The nilsoliton of Example~\ref{ex:Nil2dim4} admits only one self-adjoint derivation independent from $D=e^2\otimes e_4$, namely $X=e^2\otimes e_2+e^3\otimes e_3+e^4\otimes e_4$. Since $\langle X,X\rangle_{\Tr{}}\neq0$, this nilsoliton cannot be obtained as a restriction of a Ricci-flat pseudo-Iwasawa solvable Lie algebra.
\end{example}

\section{\textellipsis and back again}
In this section we focus on nilsolitons and describe how we can extend them to Einstein solvable Lie algebras of the form $\g\oplus^\perp\lie a$. The construction is already known for the case with $\lie a$ of dimension one (see \cite{Yan:Pseudo-RiemannianEinsteinhomogeneous,YanDeng:DoubleExtensionRiemNilsolitons}), but we will work more generally, and recover the one-dimensional results as a special case.

As in the Riemannian case (see \cite[p.~313]{Heber:noncompact}), nilsolitons of type~\ref{cond:nil3} and~\ref{cond:nil4} can be used to obtain Einstein metrics in higher dimensions:
\begin{theorem}
\label{thm:constructionstandardextension}
Let $\g$ be a nilsoliton, $\Ric=\lambda \id+D$, $\lambda\neq0$. Let $\lie a\subset\Der\g$ be a  subalgebra of self-adjoint derivations containing $D$, and assume that $\langle,\rangle_{\Tr}$ is nondegenerate on  ${\lie a}$. Then
the metric $\langle,\rangle_{\g}-\frac1{\lambda}\langle,\rangle_{\Tr}$ on $\tilde \g=\g\rtimes\lie a$ is Einstein with $\widetilde\Ric=\lambda \id$ and the decomposition $\tilde\g=\g\oplus^\perp\lie{a}$ is pseudo-Iwasawa.
\end{theorem}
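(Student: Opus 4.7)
The plan is to reduce everything to Theorem~\ref{thm:solvablenilsolitoncorrespondence} by verifying, in the proposed setup, the two conditions characterizing pseudo-Iwasawa Einstein metrics. A preliminary remark is that $\lie a$ is automatically abelian: for $X,Y\in\lie a$, the bracket $[X,Y]=XY-YX$ in $\Der\g\subset\gl(\g)$ is the commutator of two self-adjoint endomorphisms, hence anti-self-adjoint, while at the same time $[X,Y]\in\lie a$ is self-adjoint. So $[X,Y]=0$, and $\tilde\g=\g\rtimes\lie a$ is a well-defined solvable Lie algebra with $\lie a$ acting on $\g$ via the given derivations.

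Next I would verify that $\langle,\rangle_{\g}-\tfrac{1}{\lambda}\langle,\rangle_{\Tr}$ defines a pseudo-Iwasawa standard decomposition. Nondegeneracy is immediate: the restriction to $\g$ is the nilsoliton metric, the restriction to $\lie a$ is nondegenerate by hypothesis, and $\g\perp\lie a$ by construction. For any $X\in\lie a$, $\ad X$ vanishes on $\lie a$ and acts on $\g$ as the self-adjoint derivation $X$; combined with the orthogonality $\g\perp\lie a$, a one-line case split shows $\ad X=(\ad X)^*$ on all of $\tilde\g$. This establishes both the standard decomposition and the pseudo-Iwasawa property.

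By Theorem~\ref{thm:solvablenilsolitoncorrespondence}, $\widetilde\Ric=\lambda\id$ is then equivalent to (i) $\Ric=\lambda\id+\ad H$ on $\g$, where $H$ is the dual of $v\mapsto\Tr\adtilde v$, together with (ii) $\langle\ad X,\ad Y\rangle_{\Tr}=-\lambda\langle X,Y\rangle$ for all $X,Y\in\lie a$. Condition (ii) is a short trace computation: $\ad X|_{\tilde\g}$ equals $X$ on $\g$ and $0$ on $\lie a$, so $\langle\ad X,\ad Y\rangle_{\Tr}=\Tr(X\circ Y)=\langle X,Y\rangle_{\Tr}$, which is exactly $-\lambda\langle X,Y\rangle$ by the definition of the metric on $\lie a$.

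The main step, and the one I expect to be the crux, is identifying $H$ with $D$; this is where the algebraic characterization of $D$ from Theorem~\ref{thm:nilsolitonsandnik} enters. By Lemma~\ref{lemma:structuresofgandtildeg}, $\Tr\adtilde v=0$ for $v\in\g$, so $H\in\lie a$. For $X\in\lie a$ one has $\Tr\adtilde X=\Tr X$ since $\ad X$ kills $\lie a$. Theorem~\ref{thm:nilsolitonsandnik} (case $\lambda\neq 0$) gives $\Tr X=-\tfrac{1}{\lambda}\langle D,X\rangle_{\Tr}=\langle D,X\rangle$, where the last equality is just the definition of the metric on $\lie a$. Hence $\langle H,X\rangle=\langle D,X\rangle$ for every $X\in\lie a$, and the assumed nondegeneracy of $\langle,\rangle_{\Tr}$ on $\lie a$ forces $H=D$. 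Therefore $\ad H=D$ as a derivation of $\g$, and the hypothesis $\Ric=\lambda\id+D$ is precisely condition (i). Theorem~\ref{thm:solvablenilsolitoncorrespondence} then yields $\widetilde\Ric=\lambda\id$, completing the argument.
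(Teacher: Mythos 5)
Your proof is correct and follows essentially the same route as the paper: observe that $\lie a$ is abelian because commutators of self-adjoint maps are anti-self-adjoint, check the pseudo-Iwasawa condition, and reduce to Theorem~\ref{thm:solvablenilsolitoncorrespondence}. The only difference is that you spell out the identification $H=D$ via the trace identity of Theorem~\ref{thm:nilsolitonsandnik} and the nondegeneracy of $\langle,\rangle_{\Tr}$ on $\lie a$, a step the paper's two-line proof leaves implicit; your version is the more complete one.
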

\begin{proof}
The Lie algebra $\lie{a}$ is necessarily abelian, as the commutator of two self-adjoint maps is skew-self adjoint; therefore, $\tilde\g=\g\oplus^\perp\lie{a}$ is a standard decomposition.

By Lemma~\ref{lemma:ScalarOfSelfAdjoint}, the second condition of Theorem~\ref{thm:solvablenilsolitoncorrespondence} reads $\langle \phi_s,\phi_r\rangle_{\Tr}=-\lambda \langle \phi_s,\phi_r\rangle$; the statement follows.
\end{proof}

We will call the solvable Lie algebra constructed in Theorem~\ref{thm:constructionstandardextension} a \emph{pseudo-Iwasawa extension} of the nilsoliton $\g$.

\begin{remark}\label{rem:NoNilpotentDerivation}
Theorem~\ref{thm:constructionstandardextension} requires nondegeneracy of $\langle,\rangle_{\Tr}$ restricted to $\lie{a}$. If $\ad\lie{a}$  consists of nilpotent elements, then $\langle,\rangle_{\Tr}$ is zero on $\lie a$ by Engel's theorem.

In particular, the Lie algebra $\tilde\g$ is only nilpotent in the trivial case where $\g$ is Einstein and $\lie{a}=0$.
\end{remark}

\begin{example}
\label{ex:ExtensionNilsoliton}
Consider the Lie algebra
$\g=(0,0,0,e^{12},e^{13})$ that admits the nilsoliton metric
\[\langle,\rangle=e^1\otimes e^1+e^2\odot e^3-\frac{1}{2}e^4\odot e^5\]
satisfying $\Ric=\id-D$ where $D=\diag\left(\frac{1}{2},\frac{3}{4},\frac{3}{4},\frac{5}{4},\frac{5}{4}\right)$ is the Nikolayevsky derivation of $\g$. The generic symmetric derivation is
\[\left(\begin{array}{ccccc}\lambda_4-\lambda_1&0&0&0&0\\0&\lambda_1&\lambda_2&0&0\\0&\lambda_3&\lambda_1&0&0\\0&0&0&\lambda_4&\lambda_2\\0&0&0&\lambda_3&\lambda_4\end{array}\right).\]
In order to find a nondegenerate Lie algebra of symmetric derivations, we can for instance set $\lambda_2=\lambda_3$, and write
\begin{gather*}
\lie{a}=\Span{e_6,e_7,e_8}, \quad
\ad e_6=-e^1\otimes e_1+e^2\otimes e_2+e^3\otimes e_3,\\
\ad e_7=e^1\otimes e_1+e^4\otimes e_4+e^5\otimes e_5,\quad
\ad e_8=e^2\otimes e_3+e^3\otimes e_2+e^4\otimes e_5+e^5\otimes e_4.
\end{gather*}
Then $\tilde\g=\g\rtimes\lie{a}$ takes the form
\[(-e^{16}+e^{17},e^{26}+e^{38},e^{36}+e^{28},e^{12}+e^{47}+e^{58},e^{13}+e^{57}+e^{48},0,0,0),\]
with the Einstein metric
\[e^1\otimes e^1+e^2\odot e^3-\frac{1}{2}e^4\odot e^5-3e^6\otimes e^6+e^6\odot e^7-3e^7\otimes e^7 -4e^8\otimes e^8.\]
Inside $\g\rtimes\lie{a}$, one finds the Einstein Lie algebra with a pseudo-Iwasawa decomposition
\[\Span{e_1,e_2,e_3,e_4,e_5,\frac34e_6+\frac54e_7}\cong \g\rtimes\Span{D},\]
or $\g\oplus\Span{H}$ in the notation of Proposition~\ref{prop:subalgebraEinstein}.
Notice that any nondegenerate $\hat\g$ with
\[\g\rtimes\Span{D}\subset \hat\g \subset \g\rtimes\lie a\]
is again an Einstein Lie algebra with a pseudo-Iwasawa decomposition.
\end{example}
\begin{example}
In the construction of Theorem~\ref{thm:constructionstandardextension},
the choice of $\lie a$ is not unique. For instance in Example~\ref{ex:ExtensionNilsoliton} we could for instance have chosen
\[\ad e_8=e^2\otimes e_3-e^3\otimes e_2+e^4\otimes e_5-e^5\otimes e_4.\]
The resulting Lie algebras $\tilde\g$ are not isomorphic, as only one is completely solvable.

However, $\lie a$ is not allowed to contain the self-adjoint nilpotent derivation $\hat{D}=e^{2}\otimes e_3+e^4\otimes e_5$, because $\langle \hat D,\phi\rangle_{\Tr}=0$ for any self-adjoint derivation $\phi$ commuting with $\hat D$. Indeed, in this case we can choose for $\lie a$ any space of self-adjoint derivations which contains $D$ but not $\hat D$.

Thus, among the possible choices for $\lie a$ there is a minimal subalgebra, namely $\Span D$, but not a maximal one.
\end{example}

\begin{example}\label{ex:extensionNONpseudo-Iwasawa}
Consider the $8$-dimensional nice Lie algebra:
\[(0, 0, 0, e^{12}, e^{13},e^{24}, e^{15} + e^{23},e^{26} + e^{14})\]
which admits the nilsoliton metric
\begin{multline*}
\langle,\rangle=\frac{1}{10}\sqrt[3]{\frac{3}{2}}e^1\otimes e^1+\frac{1}{10} \sqrt[3]{\left(\frac{3}{2}\right)^{2}}e^2\otimes e^2+g_3e^3\otimes e^3+\frac{9}{100}e^4\otimes e^4\\
-g_3\sqrt[3]{\frac{3}{2}}e^5\otimes e^5
-\frac{9}{125} \sqrt[3]{\left(\frac{3}{2}\right)^{2}}e^6\otimes e^6+g_3\sqrt[3]{\left(\frac{3}{2}\right)^{2}}e^7\otimes e^7+\frac{27}{250}\sqrt[3]{\frac{3}{2}}e^8\otimes e^8,
\end{multline*}
satisfying
\[\Ric=\id-D\qquad D=e^3\otimes e_3 + e^5\otimes e_5 + e^7\otimes e_7.\]

Since $\Tr D^2=\Tr D=3\neq0$,  we can apply Theorem~\ref{thm:constructionstandardextension} to obtain an Einstein metric on the solvable Lie algebra $\tilde\g=\g\rtimes_D \Span{e_9}$. This Lie algebra was also considered in Examples~\ref{ex:derivedalgebradoesnotwork} and~\ref{ex:IwasawaOrNot}.

Notice that in this case all symmetric derivations are multiples of $D$.
\end{example}

\begin{corollary}
\label{cor:uniquenessstandardextension}
Let $\tilde \g=\g\oplus^\perp\lie a$ be an Einstein solvable Lie algebra of pseudo-Iwasawa type of nonzero scalar curvature. Up to isometric isomorphisms, $\tilde\g$ is a pseudo-Iwasawa extension of $\g$.
\end{corollary}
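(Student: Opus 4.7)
The plan is to read Corollary~\ref{cor:uniquenessstandardextension} as a direct converse to Theorem~\ref{thm:solvablenilsolitoncorrespondence}: the two conditions identified there as equivalent to the Einstein equation in the pseudo-Iwasawa setting are precisely the hypotheses of the construction in Theorem~\ref{thm:constructionstandardextension}. The isomorphism to be exhibited is
\[\Phi\colon \tilde\g \to \g\rtimes\ad(\lie a),\qquad v+X\mapsto v+\ad X, \quad v\in\g,\ X\in\lie a,\]
and the verification reduces to checking that $\ad(\lie a)$ has the properties required by Theorem~\ref{thm:constructionstandardextension}, and that $\Phi$ preserves both brackets and the metric.

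The first step I would carry out is to show that $\ad\colon\lie a\to\Der\g$ is injective. Suppose $X\in\lie a$ satisfies $\ad X|_\g=0$; since $\lie a$ is abelian by Definition~\ref{def:standard}, in fact $\adtilde X=0$, and part 2 of Theorem~\ref{thm:solvablenilsolitoncorrespondence} yields $\lambda\langle X,Y\rangle = -\langle\ad X,\ad Y\rangle_{\Tr}=0$ for every $Y\in\lie a$. Because $\lambda\neq0$ and the metric is nondegenerate on $\lie a$, this forces $X=0$. Thus $\lie a$ is identified with a subalgebra $\lie a'=\ad(\lie a)\subset\Der\g$ of self-adjoint derivations, the pseudo-Iwasawa condition being exactly the self-adjointness of each $\ad X$. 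Moreover, Lemma~\ref{lemma:derivationphi} places $H$ inside $\lie a$, so $D=\ad H\in\lie a'$. Finally, the relation $\langle X,Y\rangle=-\lambda^{-1}\langle \ad X,\ad Y\rangle_{\Tr}$ shows that $\langle,\rangle_{\Tr}$ is nondegenerate on $\lie a'$, the nondegeneracy of $\langle,\rangle$ on $\lie a$ being part of the standard decomposition hypothesis.

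With these verifications in place, Theorem~\ref{thm:constructionstandardextension} applies to $(\g,\lie a')$ and produces the pseudo-Iwasawa extension $\g\rtimes\lie a'$ equipped with the metric $\langle,\rangle_\g-\lambda^{-1}\langle,\rangle_{\Tr}$. It remains to check that $\Phi$ is a Lie algebra isomorphism and an isometry. The bracket on $\tilde\g$ restricts to that of $\g$ on the nilpotent part, to $[X,v]=\ad(X)(v)$ on the mixed part (which is tautologically preserved by $\Phi$), and vanishes on $\lie a$ by abelianness; the extension Lie bracket has exactly the same description, so $\Phi$ is an isomorphism of Lie algebras. Isometry holds because $\Phi$ is the identity on $\g$, both decompositions are orthogonal, and on $\lie a$ the identity $\langle X,Y\rangle=-\lambda^{-1}\Tr(\ad X\circ\ad Y)$ is precisely the defining formula of the metric on the extension. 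The main obstacle is conceptually the injectivity step, which is the only point at which one truly exploits that $\lambda\neq0$; once that is secured the rest is a formal rearrangement of Theorem~\ref{thm:solvablenilsolitoncorrespondence}.
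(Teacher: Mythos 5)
Your proposal is correct and follows essentially the same route as the paper's own (much terser) proof: injectivity of $\ad\colon\lie a\to\Der\g$ from part 2 of Theorem~\ref{thm:solvablenilsolitoncorrespondence} together with $\lambda\neq0$, identification of $\lie a$ with an algebra of self-adjoint derivations containing $D=\ad H$, and recovery of the metric on $\lie a$ from the relation $\langle X,Y\rangle=-\lambda^{-1}\langle\ad X,\ad Y\rangle_{\Tr}$. The extra verifications you supply (nondegeneracy of $\langle,\rangle_{\Tr}$ on $\ad(\lie a)$, the bracket and isometry checks for $\Phi$) are exactly the details the paper leaves implicit.
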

\begin{proof}
The map $\ad\colon\lie a\to\Der\g$ is injective by  Theorem~\ref{thm:solvablenilsolitoncorrespondence}, as we are assuming $\lambda\neq0$. This effectively identifies $\lie a$ with an algebra of symmetric derivations. The metric is determined by Theorem~\ref{thm:solvablenilsolitoncorrespondence}.
\end{proof}

Recall from Corollary~\ref{cor:Hzero} that a nilsoliton of type~\ref{cond:nil4} with $\Tr D=0$ cannot be extended to an Einstein pseudo-Iwasawa Lie algebra. If we  impose $\Tr D\neq0$ and specialize to the case with $\lie a$ of dimension one, we recover the result of~\cite[Theorem~4.7]{Yan:Pseudo-RiemannianEinsteinhomogeneous}:
\begin{corollary}
\label{cor:extendNilsolitonNil4}
Let $\g$ be a nilpotent Lie algebra with a metric $\langle,\rangle$ satisfying $\Ric=\lambda \id+D$, with $D$ a derivation with nonzero trace. Setting
\[\tilde\g=\g\rtimes_D\Span{e_0}, \quad \widetilde{\langle,\rangle}=\langle,\rangle+ (\Tr D) e^0\otimes e^0\]
defines an Einstein pseudo-Iwasawa extension of $\g$ with $\widetilde\Ric=\lambda \id$.
\end{corollary}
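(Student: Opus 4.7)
The plan is to deduce the corollary as the one-dimensional specialization of Theorem~\ref{thm:constructionstandardextension}. First I would argue that $\lambda\neq 0$: by Theorem~\ref{thm:nilsolitonsandnik}, if $\lambda=0$ then $D$ is a nilpotent derivation, hence $\Tr D=0$, contradicting the hypothesis. So we are genuinely in case~\ref{cond:nil4}, and the $\lambda\neq 0$ assumption of Theorem~\ref{thm:constructionstandardextension} is satisfied.

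Next I would take $\lie a=\Span{e_0}$ with $\ad e_0=D$ and verify the remaining hypotheses of Theorem~\ref{thm:constructionstandardextension}. The derivation $D$ is self-adjoint (this is forced by the nilsoliton equation, as noted at the start of Section~\ref{sec:nilsolitons}), and a one-dimensional subalgebra of $\Der\g$ is automatically abelian; so $\lie a$ is a subalgebra of self-adjoint derivations containing $D$. Nondegeneracy of $\langle,\rangle_{\Tr}$ on $\lie a$ amounts to $\Tr D^2\neq 0$, and Corollary~\ref{cor:TracciaDquadro} gives
\[\Tr D^2=-\lambda\Tr D,\]
which is nonzero because $\lambda\neq 0$ and $\Tr D\neq 0$ by assumption.

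Finally I would check that the extension metric produced by Theorem~\ref{thm:constructionstandardextension}, namely $\langle,\rangle_\g-\frac1\lambda\langle,\rangle_{\Tr}$, agrees with the one in the statement. Its value on $e_0$ is
\[-\tfrac1\lambda\langle D,D\rangle_{\Tr}=-\tfrac1\lambda\Tr D^2=-\tfrac1\lambda(-\lambda\Tr D)=\Tr D,\]
so $\widetilde{\langle,\rangle}=\langle,\rangle+(\Tr D)\,e^0\otimes e^0$, as claimed. The Einstein condition $\widetilde{\Ric}=\lambda\id$ and the pseudo-Iwasawa nature of the decomposition $\tilde\g=\g\oplus^\perp\lie a$ are then immediate consequences of Theorem~\ref{thm:constructionstandardextension}.

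There is no genuine obstacle: the work has already been done in Theorem~\ref{thm:constructionstandardextension} and Corollary~\ref{cor:TracciaDquadro}, and the proof amounts to a bookkeeping check that the hypotheses are met and that the coefficient of $e^0\otimes e^0$ simplifies to $\Tr D$ via the identity $\Tr D^2=-\lambda\Tr D$.
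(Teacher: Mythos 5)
Your proposal is correct and follows exactly the paper's own argument: deduce $\lambda\neq0$ from Theorem~\ref{thm:nilsolitonsandnik}, use Corollary~\ref{cor:TracciaDquadro} to get $\langle D,D\rangle_{\Tr}=-\lambda\Tr D\neq0$, and apply Theorem~\ref{thm:constructionstandardextension} with $\ad e_0=D$, simplifying the coefficient of $e^0\otimes e^0$ to $\Tr D$. No differences worth noting.
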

\begin{proof}
By Theorem~\ref{thm:nilsolitonsandnik} $\lambda$ is nonzero, so $\g$ is indeed a nilsoliton of type~\ref{cond:nil4}. By Corollary~\ref{cor:TracciaDquadro} we have $\langle D,D\rangle_{\Tr}=-\lambda \Tr D\neq0$, and $D=\Ric-\lambda \id$ is self-adjoint. Applying Theorem~\ref{thm:constructionstandardextension} with $\ad e_0=D$ we obtain the Einstein metric
\[\langle,\rangle -\frac1\lambda \langle D,D\rangle_{\Tr} e^0\otimes e^0=\langle,\rangle+(\Tr D)e^0\otimes e^0.\qedhere\]
\end{proof}

\begin{remark}\label{rem:SignatureExtension}
In Corollary~\ref{cor:extendNilsolitonNil4}, $\Tr D$ and $\lambda$ have opposite signs; indeed $\Tr D$ is nonzero, so the Nikolayevsky derivation $N$ is nonzero; therefore, by Theorem~\ref{thm:nilsolitonsandnik}, $\Tr D^2=\Tr N^2>0$ and Corollary~\ref{cor:TracciaDquadro} gives $\Tr D=-\Tr D^2/\lambda$. Therefore, if the nilsoliton metric has signature $(p,q)$, the Einstein metric on $\tilde\g$ has signature $(p+1,q)$ when the nilsoliton is \emph{shrinking} ($\lambda<0$) and $(p,q+1)$ when it is \emph{expanding} ($\lambda>0$). We will see in Example~\ref{ex:KondoTamaruExtensions} that for a fixed Lie algebra and  signature we can have both expanding and shrinking nilsolitons.
\end{remark}

\begin{remark}\label{rem:EinsteinSolvmanifoldNonSemisimple}
Unlike in the Riemannian case, on a pseudo-Riemannian Einstein solvmanifold with a pseudo-Iwasawa decomposition, the derivation $\ad H$ is not necessarily semisimple. An example can be obtained by applying Corollary~\ref{cor:extendNilsolitonNil4} to Example~\ref{ex:nil4notss}.
\end{remark}

\begin{remark}
\label{rem:JoiningMoreNilsolitons}
Passing from a nilsoliton to its Einstein extension does not generally preserve reducibility: given irreducible nilsolitons $\g_1,\dots, \g_k$ of type~\ref{cond:nil4}, the pseudo-Iwasawa extension $\tilde{\g}=(\g_1\oplus\dots \oplus \g_k)\rtimes\R$ is generally irreducible. See next example.
\end{remark}

\begin{example}
Consider the Heisenberg Lie algebra $\g_1=(0,0,e^{12})$ with the nilsoliton metric $g_{1}e^1\odot e^2 + \frac{2}{3}g_{1}^2e^3\otimes e^3$, for which
\[\Ric=\id+D_1, \qquad D_1= -\frac{2}{3}(e^1\otimes e_1+e^2\otimes e_2+2 e^3\otimes e_3),\]
and the Lie algebra $\g_2=(0,0,\hat{e}^{12},\hat{e}^{13})$ with the  metric $\hat{g}_{1}\hat{e}^1\otimes \hat{e}^1 + \hat{g}_2 \hat{e}^2\otimes \hat{e}^2 - \frac{2}{3}\hat{g}_{1} \hat{g}_{2}\hat{e}^3\otimes \hat{e}^3+\frac{4} {9}\hat{g}_{1}^2\hat{g}_2\hat{e}^4\otimes \hat{e}^4$, which is a nilsoliton with
\[\Ric=\id+D_2, \qquad D_2= -\frac{1}{3}(\hat{e}^1\otimes \hat{e}_1+2\hat{e}^2\otimes \hat{e}_2+3 \hat{e}^3\otimes \hat{e}_3+4 \hat{e}^4\otimes \hat{e}_4).\]
Notice that $\lambda$ has been normalized to one for both metrics.

Then, using Remark~\ref{rem:JoiningMoreNilsolitons} we can construct an irreducible Einstein solvable Lie algebra $\tilde\g=(\g_1\oplus\g_2)\rtimes e_0\R$ by setting $[e_0,X_1+X_2]=D_1(X_1)+D_2(X_2)$ for  $X_i\in\g_i$, together with the left-invariant metric given by:
\begin{multline*}
-6e^0\otimes e^0+g_{1}e^1\odot e^2 + \frac{2}{3}g_{1}^2e^3\otimes e^3 \\+\hat{g}_1e^4\otimes e^4 + \hat{g}_2 e^5\otimes e^5 - \frac{2}{3}\hat{g}_{1} \hat{g}_{2}e^6\otimes e^6+\frac{4}{9}\hat{g}_{1}^2 \hat{g}_2 e^7\otimes e^7,
\end{multline*}
where $\hat{e}_i$ is identified with $e_{4+i}$.
\end{example}

An explicit construction similar to Corollary~\ref{cor:extendNilsolitonNil4} applies in the case~\ref{cond:nil3}, i.e. for Einstein nilpotent Lie algebras with non-zero scalar curvature (compare with \cite[Theorem 1.2]{YanDeng:DoubleExtensionRiemNilsolitons}):
\begin{corollary}\label{cor:extendNilsolitonNil3}
Let $\g$ be a nilpotent Lie algebra with an Einstein metric $\langle,\rangle$, with $\Ric=\lambda \id$, $\lambda\neq0$, and let $\psi$ be a self-adjoint derivation of $\g$ with $\Tr\psi^2\neq0$. Setting
\[\tilde\g=\g\rtimes_\psi\Span{e_0}, \qquad \widetilde{\langle,\rangle}=\langle,\rangle-\frac1{\lambda} (\Tr \psi^2) e^0\otimes e^0\]
defines a unimodular pseudo-Iwasawa extension of $\g$ with $\widetilde\Ric=\lambda \id$.
\end{corollary}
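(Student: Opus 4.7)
The plan is to obtain this result as a direct specialization of Theorem~\ref{thm:constructionstandardextension}. Since $\Ric=\lambda\id$ with $\lambda\neq 0$, the Einstein metric on $\g$ is trivially a nilsoliton with $D=0$; by hypothesis on the Ricci operator, it is already self-adjoint, so the hypotheses of Theorem~\ref{thm:constructionstandardextension} are meaningful in this case.

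Next, I would take $\lie a=\Span{\psi}\subset\Der\g$, regarded as a one-dimensional abelian subalgebra of self-adjoint derivations. The requirement that $\lie a$ contain $D$ is automatically satisfied since $D=0$. The trace form restricted to $\lie a$ is determined by a single number, namely $\langle\psi,\psi\rangle_{\Tr}=\Tr\psi^2$, which is nonzero by hypothesis, so $\langle,\rangle_{\Tr}$ is nondegenerate on $\lie a$.

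All hypotheses of Theorem~\ref{thm:constructionstandardextension} now being met, the theorem yields that the metric $\langle,\rangle_\g-\frac{1}{\lambda}\langle,\rangle_{\Tr}$ on $\g\rtimes\lie a$ is Einstein with $\widetilde\Ric=\lambda\id$, and that the decomposition is pseudo-Iwasawa. To match the statement of the corollary, it remains only to identify the basis element $e_0\in\lie a$ with the derivation $\psi\in\Der\g$ (equivalently, to read the notation $\g\rtimes_\psi\Span{e_0}$ as meaning $\ad e_0|_\g=\psi$, so that in the language of Section~\ref{sec:standard} we have $\phi_0=-\psi$ and hence $\phi_0$ is self-adjoint). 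Under this identification the restriction of $\langle,\rangle_{\Tr}$ to $\lie a$ is precisely $(\Tr\psi^2)\,e^0\otimes e^0$, and the resulting Einstein metric on $\tilde\g$ is the one displayed in the statement.

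There is essentially no substantive obstacle: the only bookkeeping point is the identification of the abstract extension element $e_0$ with the derivation $\psi$, which amounts to unraveling the notation $\g\rtimes_\psi\Span{e_0}$. Observe also, as a consistency check, that when $\psi$ happens to be a nonzero multiple of $\id$ on $\g$ (or more generally when $\Tr\psi^2$ has a definite sign), the sign of the added term $-\frac{1}{\lambda}\Tr\psi^2$ is determined by $\lambda$, in agreement with the pattern observed in Remark~\ref{rem:SignatureExtension} for Corollary~\ref{cor:extendNilsolitonNil4}.
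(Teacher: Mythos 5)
Your proof is correct and follows exactly the route the paper intends: Corollary~\ref{cor:extendNilsolitonNil3} is a direct specialization of Theorem~\ref{thm:constructionstandardextension} with $D=0$ and $\lie a=\Span{\psi}$, on which $\langle,\rangle_{\Tr}$ is nondegenerate precisely because $\Tr\psi^2\neq0$. The bookkeeping identification of $e_0$ with $\psi$ is handled correctly, mirroring the paper's own proof of Corollary~\ref{cor:extendNilsolitonNil4}.
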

\begin{proof}
By \cite[Theorem~4.1]{ContiRossi:EinsteinNilpotent}, all derivations of $\g$ are tracefree, so in particular $\Tr\Psi$ is zero; this implies that $H$ is zero, thanks to~\eqref{eqn:derivationphi}. The extension $\tilde\g$ is Einstein by Theorem~\ref{thm:solvablenilsolitoncorrespondence}, and  unimodular by the definition of $H$.
\end{proof}

An application of this construction can be obtained using diagonal derivations, as we see in the next example.

\begin{example}
\label{ex:einsteindentroeinstein}
Consider the $9$-dimensional nilpotent Lie algebra
\[\g=(0,0,0,0,e^{13}+e^{24},- e^{12},e^{34},e^{15}+e^{23}+e^{46},e^{14}+e^{27}+e^{35}).\]
This admits two diagonal Einstein metrics with $\lambda=1/2$ (see \cite[Example~4.6]{ContiRossi:EinsteinNice}), consider for example the diagonal metric $\langle,\rangle=\sum g_ie^i\otimes e^i$:
\begin{align*}
g_1&=1,\quad g_2=-\frac{3}{16} \left(\sqrt{249}+9 \right), &g_3 &= \frac{731- 47 \sqrt{249}}{2205},\\
g_4&=\frac{131253-8321 \sqrt{249}}{463050 }, &g_5&=\frac{1}{735} \left(47 \sqrt{249}-731\right),\\
g_6&=\frac{9}{16} \left(5 \sqrt{249}+73 \right), &g_7&=\frac{16 \left(333103 \sqrt{249}-5256379\right)}{170170875},\\
g_8&=\frac{2}{105} \left(183-11 \sqrt{249}  \right), &g_9&=\frac{4 \left(131253-8321 \sqrt{249}\right)}{231525}.
\end{align*}
Diagonal derivations are spanned by
$\diag(-1, -2, 1, 2, 0, -3, 3, -1, 1)$.
If we fix
\[\psi=\frac1{\sqrt{60}} \diag({-1, -2, 1, 2, 0, -3, 3, -1, 1}),\qquad \Tr\psi^2 = \frac{1}{2}\]
we can apply Corollary~\ref{cor:extendNilsolitonNil3} and obtain a solvable Lie algebra $\tilde \g=\g\rtimes_\psi\R$ with an Einstein metric. Explicitly, setting $\ad_{e_{10}}=\sqrt{60}\psi$ for simplicity, the structure constants of $\tilde \g$ are
\begin{multline*}
\bigl(- e^{1,10},
-2  e^{2,10},
e^{3,10},
2  e^{4,10},
e^{24}+e^{13},\\
-3 e^{6,10}-e^{12},
3 e^{7,10}+e^{34},
e^{46}+e^{15}+e^{23}-e^{8,10},
e^{27}+e^{35}+e^{14}+e^{9,10},0\bigr).
\end{multline*}
The metric $\widetilde{\langle,\rangle}$ on $\tilde{\g}$ is given by the previous equations (i.e. by $\langle,\rangle$ on $\g$) and $\widetilde{\langle e_{10},e_{10}\rangle}=-\dfrac{\Tr(\ad_{e_{10}}^2)}{\lambda}=-60$. Notice that $\tilde\g$ is unimodular, so in this case $H=0$ and the nilsoliton metric induced on the nilradical according to Corollary~\ref{cor:extendNilsolitonNil3} is actually Einstein.

In particular, this example shows that a nilpotent Einstein manifold can be the nilradical of an Einstein solvmanifold.
\end{example}

For nilsolitons of type~\ref{cond:nil1}, i.e. Ricci-flat metrics, we have the following:
\begin{proposition}
\label{prop:extendnil1}
Let $\g$ be a nilpotent Lie algebra with a Ricci-flat metric $\langle,\rangle$. Let $\lie a\subset\Der\g$ be a subalgebra of self-adjoint derivations and assume that $\langle,\rangle_{\Tr}$ is zero on ${\lie a}$.
\begin{enumerate}
\item If all elements of $\lie a$ are trace-free,
\[\tilde\g=\g\rtimes\lie a,\qquad
 \widetilde{\langle,\rangle}=\langle,\rangle_{\g}+\langle,\rangle_{\lie a}\]
defines a Ricci-flat Lie algebra with a pseudo-Iwasawa decomposition for any metric $\langle,\rangle_{\lie a}$ on $\lie a$.
\item If not all elements of $\lie a$ are trace-free, for any metric $\langle,\rangle_{\lie a}$ on $\lie a$ which restricts to a nondegenerate metric on the subspace $\lie a_0\subset\lie a$ of trace-free elements,
\[\tilde\g=\g\rtimes(\lie a\oplus\Span{H}),\qquad
 \widetilde{\langle,\rangle}=\langle,\rangle_{\g}+\langle,\rangle_{\lie a} + \langle, \rangle_{H}\]
(where $\adtilde H=0$ and $\langle X,H\rangle_H=\Tr X$)
defines a Ricci-flat Lie algebra with a pseudo-Iwasawa decomposition.
\end{enumerate}
\end{proposition}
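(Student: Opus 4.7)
The natural strategy is to verify the hypotheses of Theorem~\ref{thm:solvablenilsolitoncorrespondence}, specialized to $\lambda=0$, which then immediately delivers Ricci\dash flatness. Since $\Ric_\g=0$ is given, the two conditions of that theorem become $\ad H=0$ (where $H$ is the intrinsic trace\dash dual of~\eqref{eqn:definitionH}) together with the vanishing of $\langle,\rangle_{\Tr}$ on the abelian factor of the decomposition.

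First I would establish the common framework for both cases. The commutator of two self\dash adjoint endomorphisms is skew\dash self\dash adjoint, so a subalgebra consisting of self\dash adjoint derivations is necessarily abelian; this applies to $\lie a$ in part~1 and to $\lie a\oplus\Span{H}$ in part~2 (with $H$ central by $\adtilde H=0$). For $X\in\lie a$, $\ad X$ preserves $\g$ and annihilates the abelian factor, and combined with the orthogonality of the decomposition, $\ad X$ is self\dash adjoint on all of $\tilde\g$; the pseudo\dash Iwasawa condition follows.

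For part~1, every $X\in\lie a$ is trace\dash free and $\Tr\adtilde v=0$ for $v\in\g$ by nilpotency, so the intrinsic $H$ vanishes and $\ad H=0=\Ric_\g$; combined with the hypothesis $\langle,\rangle_{\Tr}\equiv 0$ on $\lie a$, Theorem~\ref{thm:solvablenilsolitoncorrespondence} yields $\widetilde\Ric=0$. For part~2 the extra work is to recognize that the added $H$ coincides with the intrinsic one. Interpreting $\langle H,H\rangle_H=0$ (consistent with $\Tr\adtilde H=0$), one checks that the added $H$ has the pairings $\langle H,\g\rangle=0$, $\langle H,X\rangle=\Tr X=\Tr\adtilde X$ for $X\in\lie a$, and $\langle H,H\rangle=0=\Tr\adtilde H$; by the defining property~\eqref{eqn:definitionH} and nondegeneracy of the metric, it agrees with $H_{\mathrm{intr}}$. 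Hence $\ad H_{\mathrm{intr}}=\adtilde H|_\g=0=\Ric_\g$, and the trace form on $\lie a\oplus\Span{H}$ vanishes since it is zero on $\lie a$ by hypothesis and involves $H$ only through $\ad H=0$; Theorem~\ref{thm:solvablenilsolitoncorrespondence} again concludes $\widetilde\Ric=0$.

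It remains to verify nondegeneracy of $\widetilde{\langle,\rangle}$ in part~2. If $Z=X+aH\in\lie a\oplus\Span{H}$ lies in its radical, pairing with $H$ gives $\Tr X=0$ (so $X\in\lie a_0$); pairing with arbitrary elements of $\lie a_0$ and using nondegeneracy of $\langle,\rangle_{\lie a}|_{\lie a_0}$ forces $X=0$; pairing with any $Y\in\lie a$ of nonzero trace (which exists by the standing hypothesis) then forces $a=0$. The main obstacle is really the bookkeeping in part~2, namely correctly identifying the added $H$ with the intrinsic one and checking that the extended metric remains nondegenerate; once these points are settled, the Ricci\dash flat conclusion is an essentially mechanical application of Theorem~\ref{thm:solvablenilsolitoncorrespondence}.
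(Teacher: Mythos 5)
Your proposal is correct and follows essentially the same route as the paper, which likewise disposes of both cases by reducing to Theorem~\ref{thm:solvablenilsolitoncorrespondence} (noting unimodularity, hence $H=0$, in case~1, and that the adjoined $H$ satisfies~\eqref{eqn:definitionH} with $\adtilde H=0$ in case~2). Your additional verifications (the pseudo-Iwasawa condition, the identification of the adjoined $H$ with the intrinsic one, and the nondegeneracy of the extended metric) are exactly the details the paper leaves implicit, and they check out.
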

\begin{proof}
In the first case $\tilde\g$ is unimodular; the statement follows from Theorem~\ref{thm:solvablenilsolitoncorrespondence}.

In the second case, $H$ satisfies~\eqref{eqn:definitionH} and $\adtilde H=0$, so again we conclude using Theorem~\ref{thm:solvablenilsolitoncorrespondence}.
\end{proof}
\begin{remark}
Unlike the situation of Remark~\ref{rem:NoNilpotentDerivation}, the derivation $\psi$ is allowed to be nilpotent in  Proposition~\ref{prop:extendnil1}. Thus, nilpotent Ricci-flat Lie algebras may admit nilpotent Ricci-flat extensions.
\end{remark}

We will explain this procedure in the following example.

\begin{example}
Consider the Heisenberg Lie algebra $\lie{h}=(0,0,e^{12})$ with the Ricci-flat metric $e^1\odot e^3 +e^2\otimes e^2$ and consider the derivations $D_1= e^1\otimes e_2+ e^2\otimes e_3$ and $D_2=e^1\otimes e_3$. These derivations commute, are self-adjoint with respect to $\langle,\rangle$ and satisfy $\Tr (D_i\circ D_j)=0=\Tr D_i$. Thus Proposition~\ref{prop:extendnil1} applies and setting $\ad {e_4}=D_1,\ad {e_5}=D_2$ on the nilpotent Lie algebra
\[\tilde\g=(0,e^{41}, e^{42}+e^{51}+e^{12},0,0)\]
we have a $3$-parameter family of Ricci-flat metrics given by:
\[\widetilde{\langle,\rangle}= e^1\odot e^3 +e^2\otimes e^2+g_4 e^4\otimes e^4+g_{45}e^4\odot e^5+g_5e^5\otimes e^5.\]

Note that the extension of Proposition~\ref{prop:extendnil1} may be solvable nonnilpotent: e.g. on the nilpotent Lie algebra $(0,0,e^{12},e^{12})$ with the Ricci-flat metric $e^1\odot e^3+e^2\odot e^4$ one can consider the self-adjoint derivation \[D'= -e^1\otimes e_1+e^1\otimes e_2-e^2\otimes e_1+e^2\otimes e_2-e^3\otimes e_3-e^3\otimes e_4+e^4\otimes e_3+e^4\otimes e_4,\]
obtaining a unimodular solvable nonnilpotent extension $\g\rtimes_{D'}\R$ with a Ricci-flat metric that extends the Ricci-flat metric on $\g$.
\end{example}

\begin{example}
\label{ex:estendonil1}
Consider $\R^3\rtimes \lie a$, with
\[\ad e_4=e^1\otimes e_2 - e^2\otimes e_1 + \sqrt2 e^3\otimes e_3, \quad \ad e_5=0.\]
Choose
\[\langle,\rangle = e^1\odot e^2 + e^3\otimes e^3+e^4\odot e^5.\]
Then $H=\sqrt2 e_5$. This is a nonunimodular extension of a Ricci-flat metric.
\end{example}

Similarly, for nilsolitons of type~\ref{cond:nil2}, we have the following:
\begin{proposition}
\label{prop:extendnil2}
Let $\g$ be a nilpotent Lie algebra with a nisoliton metric $\langle,\rangle$ of type~\ref{cond:nil2}, $\Ric=D$. Let $\lie a\subset\Der\g$ be a subalgebra of self-adjoint derivations containing $D$ and assume that $\langle,\rangle_{\Tr}$ is zero on ${\lie a}$. Then for any metric $\langle,\rangle_{\lie a}$ on $\lie a$ such that $\langle D,X\rangle_{\lie a}=\Tr X$ for all $X$,
\[\tilde\g=\g\rtimes\lie a,\qquad
 \widetilde{\langle,\rangle}=\langle,\rangle_{\g}+\langle,\rangle_{\lie a}\]
defines a Ricci-flat Lie algebra with a pseudo-Iwasawa decomposition.

Moreover $\dim \lie{a}\geq2$ and $\tilde{\g}$ is nonunimodular.
\end{proposition}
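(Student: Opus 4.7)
The plan is to reduce the statement to Theorem~\ref{thm:solvablenilsolitoncorrespondence} with $\lambda=0$, after verifying that the data given assemble into a genuine metric Lie algebra with a pseudo-Iwasawa decomposition. First I would observe that $\lie a$ is automatically abelian: the commutator of two self-adjoint endomorphisms is anti-self-adjoint, so the bracket of two elements of $\lie a$ is simultaneously self-adjoint (being in $\lie a$) and anti-self-adjoint, hence zero. Consequently $\tilde\g=\g\rtimes\lie a$ is a well-defined solvable Lie algebra, and since the metric $\widetilde{\langle,\rangle}$ makes $\g$ and $\lie a$ orthogonal with $\g$ a nilpotent ideal, $\tilde\g=\g\oplus^\perp\lie a$ is a standard decomposition. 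The pseudo-Iwasawa condition $\ad X=(\ad X)^*$ for $X\in\lie a$ holds tautologically on $\g$ (where $\ad X$ acts as the self-adjoint derivation $X$) and trivially on $\lie a$ (where it acts as zero).

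Next I would identify the dual vector $H$ of Lemma~\ref{lemma:derivationphi}. For $v\in\g$, Lemma~\ref{lemma:structuresofgandtildeg} gives $\Tr\adtilde v=0$, so $\langle H,v\rangle=0$ and $H\in\g^\perp=\lie a$. For $X\in\lie a$, since $\ad X$ vanishes on $\lie a$ and restricts to $X$ on $\g$, we get $\Tr\adtilde X=\Tr X$. The assumption $\langle D,X\rangle_{\lie a}=\Tr X$ then reads $\langle H,X\rangle_{\lie a}=\langle D,X\rangle_{\lie a}$ for all $X\in\lie a$, and nondegeneracy of $\langle,\rangle_{\lie a}$ forces $H=D$. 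In particular $\ad H$ equals $D$ as a derivation of $\g$.

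At this point Theorem~\ref{thm:solvablenilsolitoncorrespondence} applies with $\lambda=0$: condition (1), $\Ric=0\cdot\id+\ad H=D$, is exactly the nilsoliton hypothesis, and condition (2), $\langle \ad X,\ad Y\rangle_{\Tr}=-\lambda\langle X,Y\rangle_{\lie a}=0$, is the vanishing hypothesis on $\langle,\rangle_{\Tr}|_{\lie a}$. Therefore $\tilde\g$ is Ricci-flat with a pseudo-Iwasawa decomposition.

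Finally I would dispose of the two residual assertions. Since $\g$ is of type~\ref{cond:nil2}, $D$ is nonzero and, by Theorem~\ref{thm:nilsolitonsandnik}, nilpotent, so $\Tr D=0$; this gives $\langle D,D\rangle_{\lie a}=\Tr D=0$, so $D$ is isotropic. Were $\dim\lie a=1$, the requirement $D\in\lie a$ would force $\lie a=\Span{D}$, on which $\langle,\rangle_{\lie a}$ would be identically zero, contradicting its nondegeneracy; hence $\dim\lie a\ge 2$. For nonunimodularity, the identity $\langle D,X\rangle_{\lie a}=\Tr X$ together with $D\neq 0$ and nondegeneracy of $\langle,\rangle_{\lie a}$ provides some $X\in\lie a$ with $\Tr X\neq 0$, so $\Tr\adtilde X\neq 0$ and $\tilde\g$ is nonunimodular. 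The only step that requires genuine care is the identification $H=D$, which pins down the right interpretation of the hypothesis $\langle D,X\rangle_{\lie a}=\Tr X$ as the assertion that $D$ is the metric dual of the trace functional; the rest is bookkeeping on top of results already proved.
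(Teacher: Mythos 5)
Your proposal is correct and follows essentially the same route as the paper: the first part is reduced to Theorem~\ref{thm:solvablenilsolitoncorrespondence} with $\lambda=0$ after identifying the mean-curvature vector $H$ with $D$ (a step the paper leaves implicit by referring back to Proposition~\ref{prop:extendnil1}), and the final assertions are obtained exactly as in the paper from the isotropy of $D$ ($\langle D,D\rangle_{\lie a}=\Tr D=0$ since $D$ is nilpotent) together with nondegeneracy of $\langle,\rangle_{\lie a}$. Your explicit verification that $\lie a$ is abelian, that the decomposition is pseudo-Iwasawa, and that $H=D$ is sound bookkeeping that the paper omits.
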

\begin{proof}
The first part is as in Proposition~\ref{prop:extendnil1}; here we only prove the last part.
Since $\langle,\rangle_{\lie{a}}$ is non degenerate and $\langle D,D\rangle_\lie{a}=\Tr D=0$, there must exist at least another derivation $X$ in $\lie{a}$ such that $\Tr X=\langle D,X\rangle_\lie{a}\neq0$, so $\dim \lie{a}\geq2$ and $\tilde{\g}$ is nonunimodular.
\end{proof}

\begin{example}
\label{ex:estendonil2}
Consider the $5$-dimensional nilpotent Lie algebra $\g=\lie{h}\oplus\R^2=(0,0,e^{12},0,0)$ with the metric
\[\langle,\rangle_{\g}=g_1e^1\otimes e^1+g_2e^2\otimes e^2 + 2g_1g_2 e^3\odot e^4+g_5 e^5\otimes e^5,\quad \Ric=D=e^4\otimes e_3\]
which is a nilsoliton of type~\ref{cond:nil2}, for any $g_i$. Consider also the derivation given by
\[X=e^1\otimes e_2-2 e^2\otimes e_1+2e^5\otimes e_5,\]
which commutes with $D$, and satisfies $\Tr{X}=2\neq0$, $\Tr{X^2}=0$. Choosing in the metric $g_2=-2g_1$, the derivation $X$ is also selfadjoint. Hence, we can construct a non unimodular solvable Lie algebra $\tilde\g=\g\rtimes \lie{a}$ where $\lie{a}$ contains the vector $e_6=H$ such that $\ad_H=D$ and a vector $e_7$ such that $\ad_{e_7}=X$. Considering the pseudo-Riemannian metric on $\tilde\g$ given by:
\[\langle,\rangle_{\tilde{\g}}=\langle,\rangle_{\g}+2e^6\odot e^7=g_1e^1\otimes e^1-2g_1e^2\otimes e^2 -4g_1^2 e^3\odot e^4+g_5 e^5\otimes e^5 +2e^6\odot e^7,\]
we have found a Ricci-flat metric on $\tilde{\g}$. Explicitly the Lie algebra $\tilde{\g}$ is given by:
\[(2e^{72},-e^{71},e^{12}-e^{64},0,-2e^{75},0,0).\]
\end{example}

\begin{example}\label{ex:KondoTamaruExtensions}
Kondo and Tamaru proved in~\cite{KondoTamaru:LorentzianNilpotent} that for any $n\geq 4$, there are exactly $6$ left-invariant Lorentzian nilsoliton metrics (up to automorphisms and rescaling) on the product of the Heisenberg Lie algebra $\lie h$ and $\R^{n-3}$, namely on the Lie algebra $\g_n=(0,0,\dots,0,e^{12})$. Those $6$ metrics $g_{\lambda,\xi}$ can be written explicitly with respect to the basis $\{e^i\}$ as
\[
\sum_{i=1}^{n-3}e^i\otimes e^i-\xi e^1\odot e^{n-1}-\lambda e^1\odot e^{n}+(1+\xi^2)e^{n-1}\otimes e^{n-1}+\lambda\xi e^{n-1}\odot e^{n}+(\lambda^2-1)e^{n}\otimes e^{n},
\]
and depend on two parameters $(\lambda,\xi)\in\{(0,0),(1,0),(1,1),(2,0),(2,\sqrt{3}),(2,2)\}$. A direct computation, using the formula for the Ricci in \cite[Section 5]{KondoTamaru:LorentzianNilpotent}, allows us to classify those nilsolitons according to their type. Moreover, we can use the results of this section to determine whether an Einstein pseudo-Iwasawa extension exists.
\begin{enumerate}
\item The metric associated to $(1,0)$ is Ricci-flat (i.e.~\ref{cond:nil1}). We see that, depending on the dimension of $n$, there are many choices of subalgebras $\lie a$ of trace-free self-adjoint derivations and such that $\langle,\rangle_{\Tr}$ is zero. For example, we can choose the derivation $X=e^1\otimes e_{n-1}-e^{n-1}\otimes e_n$ and applying Proposition~\ref{prop:extendnil1} obtain a Ricci-flat metric on the nilpotent Lie algebra $\tilde{\g}_n=\g_n\rtimes_X \R$, with signature $(n,1)$ or $(n-1,2)$ according to the sign chosen for $\langle X,X\rangle$.
\item The metrics associated to $(1,1)$ and $(2,\sqrt{3})$ are nilsolitons of type~\ref{cond:nil2}. However, a straightforward computation shows that in both cases there are no self-adjoint derivations $X$ commuting with $D$ such that $\Tr X^2=0$ and $\Tr X\neq0$, so no extension to a pseudo-Iwasawa Einstein solvmanifold can be found.
\item The metrics associated to $(0,0)$, $(1,\sqrt{3})$ and $(2,2)$ are nilsolitons of type~\ref{cond:nil4}; with reference to~\eqref{eqn:nilsoliton}, the coefficients $\lambda$ are respectively $\frac32$, $\frac{27}{2}$ and $-\frac{9}{2}$. Corollary~\ref{cor:extendNilsolitonNil4} allows the construction of a pseudo-Iwasawa extension $\tilde{\g_n}=\g_n\rtimes_D\R$, with signature $(n-1,2)$ in the first two cases and $(n,1)$ in the last, according to  Remark~\ref{rem:SignatureExtension}.
\end{enumerate}
Since $\g_n$ admits derivations with nonzero trace, there are no metrics of type~\ref{cond:nil3} (see \cite[Theorem~4.1]{ContiRossi:EinsteinNilpotent}).
\end{example}

In light of Proposition~\ref{prop:pseudoAzencottWilson}, it is not surprising that the methods of this section can also be used to obtain Einstein solvable Lie algebras that do not admit a pseudo-Iwasawa decomposition. The idea is that a pseudo-Iwasawa Einstein solvable Lie algebra determines a pseudo-Riemannian manifold which generally admits other simply transitive actions of a solvable Lie group, not necessarily of pseudo-Iwasawa type. A similar technique was used in~\cite{Jablonski1} to obtain solvable Lie algebras with a Ricci soliton metric which are not solvsolitons.

Given a Lie algebra $\g$ with a metric $\langle,\rangle$, choose $\lie a$ such that
\begin{equation}
\label{eqn:AW}
\lie a\subset\Der\g, \quad
[\lie a, \lie a]=0, \quad \lie a^s:=\{f^s\st f\in\lie a\}\subset \Der\g, \quad [\lie a, \lie a^s]=0.
\end{equation}
We can define a scalar product on $\lie a$ by
\[\langle f,g\rangle_{s}=\langle f^s,g^s\rangle_{\Tr{}}.\]

\begin{proposition}
\label{prop:costruiscinonpseudoiwasawa}
Let $\g$ be a nilsoliton, $\Ric=\lambda\id+D$, and let $\lie a$ be a subspace of $\Der\g$ satisfying~\eqref{eqn:AW}.
\begin{enumerate}
\item \label{item:nonpseudoIwasawalambdanonzero}
If $\lambda\neq0$, $D\in\lie a^s$ and $\langle,\rangle_{s}$ is nondegenerate on ${\lie a}$, then
the metric $\langle,\rangle_{\g}-\frac1{\lambda}\langle,\rangle_{s}$ on $\tilde \g=\g\rtimes\lie a$ is Einstein with $\widetilde\Ric=\lambda \id$.
\item
\label{item:nonpseudoIwasawalambdazero}
If $\lambda=0$, $D\neq0$, $D=H^s$ for some  $H\in\lie a$ and $\langle,\rangle_{s}$ is zero, then for any metric $\langle,\rangle_{\lie a}$
on $\lie a$ such that $\langle H,X\rangle_{\lie a}=\Tr X$ for all $X$, the metric $\langle,\rangle_\g+\langle,\rangle_{\lie a}$ is Ricci-flat on the Lie algebra $\tilde \g=\g\rtimes\lie a$.
\item
\label{item:nonpseudoIwasawaD0H0}
If $\lambda=0$, $D=0$, elements of $\lie a$ are trace-free and $\langle,\rangle_{s}$ is zero, then for any metric $\langle,\rangle_{\lie a}$ on $\lie a$ the metric $\langle,\rangle_\g+\langle,\rangle_{\lie a}$ is Ricci-flat on the Lie algebra $\tilde \g=\g\rtimes\lie a$.
\item
\label{item:nonpseudoIwasawaD0Hnonzero}
If $\lambda=0$, $D=0$, elements of $\lie a$ are not all trace-free and $\langle,\rangle_{s}$ is zero, then for any metric $\langle,\rangle_{\lie a}$ on $\lie a$ which restricts to a nondegenerate metric on the subspace $\lie a_0\subset\lie a$ of trace-free elements, the metric $\langle,\rangle_\g+\langle,\rangle_{\lie a}+\langle,\rangle_H$ is Ricci-flat on the Lie algebra $\tilde \g=\g\rtimes\lie (\lie a\oplus\Span{H})$, where $\adtilde H=0$ and $\langle H,X\rangle_H=\Tr X$.
\end{enumerate}
\end{proposition}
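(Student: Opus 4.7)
The plan is to derive each of the four statements from the corresponding construction in Section~\ref{sec:correspondence} by means of the Azencott-Wilson trick. More precisely, I would first check that the data $(\g,\langle,\rangle,\lie a)$ satisfy the hypotheses of Proposition~\ref{prop:pseudoAzencottWilson} on $\tilde\g=\g\rtimes\lie a$ (or on $\g\rtimes(\lie a\oplus\Span{H})$ in the last case), and then invoke the isometry provided by that proposition to reduce to a pseudo-Iwasawa setting where my previous results apply.

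For $X\in\lie a$, a direct calculation in $\tilde\g=\g\rtimes\lie a$ shows that $(\ad X)^*$ acts as $X^s-X^a=2X^s-X$ on $\g$ and as zero on $\lie a$. By~\eqref{eqn:AW}, $X^s$ is a derivation of $\g$; hence so is $X^a=X-X^s$, and therefore $(\ad X)^*$ is a derivation of $\g$ that extends to a derivation of $\tilde\g$ because $\lie a$ is abelian (and $H$ is central in case~\ref{item:nonpseudoIwasawaD0Hnonzero}). The commutator $[(\ad X)^*,\ad Y]=[2X^s-X,Y]$ vanishes on $\g$ by the hypotheses $[\lie a,\lie a^s]=0$ and $[\lie a,\lie a]=0$, and vanishes trivially on $\lie a$. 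Thus Proposition~\ref{prop:pseudoAzencottWilson} produces an isometry $\tilde\g\cong\tilde\g^*=\g\rtimes_\chi\lie a$ (respectively $\g\rtimes_\chi(\lie a\oplus\Span{H})$), where $\chi(X)=X^s$, and by construction the decomposition on $\tilde\g^*$ is pseudo-Iwasawa; the Einstein or Ricci-flat condition therefore transfers between the two.

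At this stage each case reduces to a previously established result. For part~\ref{item:nonpseudoIwasawalambdanonzero}, nondegeneracy of $\langle,\rangle_s$ forces $\chi$ to be injective, since $X^s=0$ implies $\langle X,Y\rangle_s=\langle X^s,Y^s\rangle_{\Tr}=0$ for every $Y$. Hence $\chi$ identifies $\lie a$ with the abelian subalgebra $\lie a^s\subset\Der\g$ (which contains $D$), the induced metric on $\lie a^s$ is $-\frac{1}{\lambda}\langle,\rangle_{\Tr}$, and Theorem~\ref{thm:constructionstandardextension} applies. For part~\ref{item:nonpseudoIwasawalambdazero} one observes that $\chi(H)=H^s=D$ and $\Tr\chi(X)=\Tr X^s=\Tr X$ (using the basic identity $\Tr A^*=\Tr A$, which forces anti-selfadjoint operators to be trace-free), while $\langle,\rangle_s=0$ translates to the vanishing of $\langle,\rangle_{\Tr}$ on $\lie a^s$; the hypotheses of Proposition~\ref{prop:extendnil2} are thus met. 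Parts~\ref{item:nonpseudoIwasawaD0H0} and~\ref{item:nonpseudoIwasawaD0Hnonzero} are handled identically, invoking respectively the two cases of Proposition~\ref{prop:extendnil1}.

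The only technical point requiring care is the possible failure of $\chi$ to be injective in parts~\ref{item:nonpseudoIwasawalambdazero}--\ref{item:nonpseudoIwasawaD0Hnonzero}. Any $X\in\ker\chi$ is anti-selfadjoint, hence automatically trace-free and central in $\tilde\g^*$, and satisfies $\langle X,\cdot\rangle_s=0$; such an $X$ therefore contributes a purely abelian, metrically orthogonal summand that leaves both the Lie bracket on $\g$ and the Ricci tensor unaffected. Once this bookkeeping is handled, the four cases follow from the corresponding extension results of Section~\ref{sec:correspondence} without further complication.
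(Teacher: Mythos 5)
Your verification of the hypotheses of Proposition~\ref{prop:pseudoAzencottWilson} is correct, and your treatment of part~\ref{item:nonpseudoIwasawalambdanonzero} coincides with the paper's: nondegeneracy of $\langle,\rangle_s$ forces $\chi$ to be injective, and Theorem~\ref{thm:constructionstandardextension} then finishes the argument. The gap lies in parts~\ref{item:nonpseudoIwasawalambdazero}--\ref{item:nonpseudoIwasawaD0Hnonzero}, precisely at the point you dismiss as ``bookkeeping''. An element $X\in\ker\chi$ is indeed anti-selfadjoint, trace-free and central in $\tilde\g^*$, but it does \emph{not} in general span a metrically orthogonal summand: the only constraint on $\langle,\rangle_{\lie a}$ involving $X$ is $\langle H,X\rangle_{\lie a}=\Tr X=0$, and nothing prevents $\langle X,Y\rangle_{\lie a}\neq0$ for some $Y\in\lie a$ outside $\ker\chi$, nor need $\langle,\rangle_{\lie a}$ restrict nondegenerately to $\ker\chi$. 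Consequently $\tilde\g^*$ does not split as $(\g\rtimes\lie a^s)\oplus^\perp\ker\chi$, the metric on $\lie a$ does not descend to $\lie a^s\cong\lie a/\ker\chi$, and Propositions~\ref{prop:extendnil1} and~\ref{prop:extendnil2}, which presuppose a faithful identification $\lie a\subset\Der\g$, cannot be invoked as stated.

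The gap is repairable without abandoning your reduction: the formulas of Lemma~\ref{lem:RicciPseudo-Iwasawa} and the criterion of Theorem~\ref{thm:solvablenilsolitoncorrespondence} apply verbatim to the pseudo-Iwasawa decomposition of $\g\rtimes_\chi\lie a$ whether or not $\chi$ is faithful, since they involve only $\ad X|_\g=\chi(X)$ and the vector $H$ of~\eqref{eqn:definitionH}. One then checks that this vector coincides with the given $H$ (because $\Tr\chi(X)=\Tr X=\langle H,X\rangle_{\lie a}$), that $\chi(H)=H^s=D=\Ric$, and that $\langle\ad X,\ad Y\rangle_{\Tr}=\langle X,Y\rangle_s=0$, which is exactly the Ricci-flatness criterion. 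Be aware that the paper sidesteps the issue differently: it uses the Azencott--Wilson reduction only in part~\ref{item:nonpseudoIwasawalambdanonzero}, and in the remaining parts computes $\widetilde\ric$ directly on the original $\tilde\g=\g\rtimes\lie a$ via Proposition~\ref{prop:ricciofstandard}, exploiting normality of the elements of $\ad\lie a$ to obtain $\widetilde\ric(v,w)=\langle D(v)-(\ad H)^s(v),w\rangle=0$.
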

\begin{proof}
In case~\ref{item:nonpseudoIwasawalambdanonzero}, the condition that $\langle,\rangle_s$ is nondegenerate implies that the projection $\lie a\to \lie a^s$ is injective. By Proposition~\ref{prop:pseudoAzencottWilson},
the solvmanifold defined in the statement is isometric to a solvmanifold $\g\rtimes\lie a^s$ with a pseudo-Iwasawa decomposition, which is Einstein by Theorem~\ref{thm:constructionstandardextension}.

In case~\ref{item:nonpseudoIwasawalambdazero} we cannot assume that the projection
$\lie a\to \lie a^s$ is injective, so we proceed by a direct application of Proposition~\ref{prop:ricciofstandard}. Since $H$ satisfies~\eqref{eqn:definitionH}, it acts on $\g$ by~\eqref{eqn:derivationphi}, i.e. $H=\sum_r \epsilon_r(\phi_r)\Tr\phi_r$. As elements of $\lie a$ are normal, for $v,w\in\lie g$ we have
\[\widetilde\ric(v,w)=\bigl\langle D(v)-\sum_r \epsilon_r(\phi_r)^s(v)\Tr\phi_r,w\bigr\rangle=\langle D(v)-(\ad H)^s(v),w\rangle=0.\]
Arguing as in Lemma~\ref{lem:RicciPseudo-Iwasawa} we see that $\langle\ad v, D\rangle_{\Tr}=0$ for every derivation $D$, so
\[\langle \ad v,X\rangle = \langle \ad v, X^s\rangle_{\Tr{}} -\langle \ad v, X^a\rangle_{\Tr{}}=0,\quad X\in\lie a.\]
This implies $\widetilde\ric(v,X)=0$.

Finally, for $X,Y\in\lie a$, we have
\[
\langle X,Y\rangle_{\Tr{}}=\langle X^s,Y^s\rangle_{\Tr{}}+\langle X^a,Y^a\rangle_{\Tr{}}=\langle X^a,Y^a\rangle_{\Tr{}}=\langle X,Y^a\rangle_{\Tr{}}=-\langle X,Y\rangle,\]
where we have used Lemma~\ref{lemma:ScalarOfSelfAdjoint}; this implies that $\widetilde\ric(X,Y)=0$.

Cases~\ref{item:nonpseudoIwasawaD0H0} and \ref{item:nonpseudoIwasawaD0Hnonzero} are proved in the same way.
\end{proof}

%

\small\noindent Dipartimento di Matematica e Applicazioni, Universit\`a di Milano Bicocca, via Cozzi 55, 20125 Milano, Italy.\\
\texttt{diego.conti@unimib.it}\\
\texttt{federico.rossi@unimib.it}

\end{document}